\definecolor{Chocolat}{rgb}{0.36, 0.2, 0.09}
\definecolor{BleuTresFonce}{rgb}{0.215, 0.215, 0.86}
\newtheorem{theorem}{Theorem}[section]
\newtheorem{prop}[theorem]{Proposition}
\newtheorem{lemma}[theorem]{Lemma}
\newtheorem{cor}[theorem]{Corollary}
\newtheorem{question}[theorem]{Question}
\newtheorem{mainthm}{Theorem}
\theoremstyle{definition}
\newtheorem{definition}[theorem]{Definition}
\newtheorem{remark}[theorem]{Remark}
\newtheorem{example}[theorem]{Example}
\numberwithin{equation}{section}
\newcommand{\A}{\mathbb{A}}
\newcommand{\C}{\mathbb{C}}
\newcommand{\G}{\mathbb{G}}
\newcommand{\N}{\mathbb{N}}
\newcommand{\Q}{\mathbb{Q}}
\newcommand{\Z}{\mathbb{Z}}
\newcommand{\Kbar}{\bar{\kappa}}
\newcommand{\PP}{\mathbb{P}}
\newcommand{\wt}{\widetilde}
\newcommand{\Spec}{\operatorname{Spec}}
\newcommand{\Pic}{\operatorname{Pic}}
\newcommand{\rank}{\operatorname{rank}}
\newcommand{\mult}{\operatorname{mult}}
\newcommand{\codim}{\operatorname{codim}}
\newcommand{\st}{\,:\,}
\tikzstyle{vertex}=[circle, draw, inner sep=0pt, minimum size=5pt]
\begin{document}

\title[On Proper Descent of Smooth Affine Surfaces with Finite Homotopy Rank-Sum]{On Proper Descent of Smooth Affine Surfaces\\with Finite Homotopy Rank-Sum}
\author[B. Hajra]{Buddhadev Hajra}

\address{Stat-Math-Unit, Indian Statistical Institute Kolkata, 203 B. T. Road, Baranagar, Kolkata 700108, India}

\email{hajrabuddhadev92@gmail.com}

\subjclass[2020]{14F35, 14F45, 14J10, 55P20}

\keywords{Eilenberg-MacLane space, Stein space, open algebraic surface,
logarithmic Kodaira dimension}

\begin{abstract}
We study the descent behaviour of homotopy-theoretic properties of smooth complex affine surfaces under finite surjective morphisms. We first examine the {\it Eilenberg--MacLane property} and show, by means of an explicit counterexample, that it does not descend under proper morphisms in general. This negative result motivates the introduction of a weaker notion, the {\it finite homotopy rank-sum property}. Our main theorem establishes that this property does descend under proper morphisms between smooth affine surfaces of logarithmic Kodaira dimension at most zero. The proof relies essentially on the recent classification of smooth complex affine surfaces of log non-general type characterized by these two properties. As a further application, we classify smooth affine surfaces properly dominated by the complex algebraic 2-torus, thereby clarifying an earlier remark of M. Furushima.
\end{abstract}

\maketitle

\tableofcontents

\section{\bf Introduction}\label{se1}

We begin by motivating our interest in studying smooth complex algebraic surfaces that are also Eilenberg--MacLane spaces. A path-connected topological space $X$ is said to be an \emph{Eilenberg--MacLane space} if, for some integer $n \in \Z_{>0}$ and a group $G$,
\[
\pi_k(X)=
\begin{cases}
G, & \text{for } k=n,\\[2mm]
0, & \text{otherwise},
\end{cases}
\]
and is denoted by $K(G,n)$. For any abelian group $G$ and any integer $n\,\in\, \N$, in the category of CW-complexes an Eilenberg--MacLane $K(G,\,n)$ space exists
and it is unique up to homotopy (cf. \cite[Chapter 8, \S1, Corollary 5]{Spa1966} and \cite[Corollary 2]{MT1968}). So we use the same notation $K(G,\,n)$ to denote any CW complex belonging to this homotopy class.

\medskip

Throughout this paper, all algebraic varieties and morphisms are defined over the field of complex numbers $\C$. An affine variety $V$ is said to be \emph{properly dominated} by an affine variety $W$ if there is a proper surjective morphism $\varphi: V \to W$ of $V$ onto $W$. For affine varieties, properness is equivalent to finiteness, hence proper domination amounts to the existence of a finite surjective morphism. In commutative algebraic terms, writing $V=\Spec B$ and $W=\Spec A$, this is equivalent to $B$ being integral (or finite) over $A$. 

By Noether normalization, every affine variety of dimension $d$ admits a finite surjective morphism onto $\mathbb{A}^d$; in particular, every affine variety is properly dominated by the affine space of the same dimension.

\medskip

Suppose $f \colon X \to Y$ is a finite surjective morphism between smooth algebraic surfaces. A classical result of J.-P.~Serre asserts that the image of the induced homomorphism 
$f_{\ast} \colon \pi_1(X) \to \pi_1(Y)$ 
has finite index in $\pi_1(Y)$, and moreover, this index divides the degree of~$f$ (cf.~\cite{Ser1959}). Consequently, if $\pi_1(X)$ is finite, then $\pi_1(Y)$ must also be finite. This naturally leads to the following question.

\begin{question}\label{Ques : Basic Ques}
Let $f \colon X \to Y$ be a finite surjective morphism of smooth affine surfaces. If $\pi_2(X)$ is finite, is $\pi_2(Y)$ also finite?
\end{question}

To approach this, consider a smooth affine surface $V$ for which $\pi_2(V)$ is finite. It is well known that every algebraic variety admits a universal covering space. Let $p \colon \widetilde{V} \to V$ denote the universal covering map. By the Hurewicz theorem,\footnote{
For any path-connected space $X$ and positive integer $n$, there exists a group homomorphism 
$h_{*} \colon \pi_{n}(X)\to H_{n}(X;\Z)$, called the \emph{Hurewicz homomorphism}.\\[2mm]
\textbf{Hurewicz Theorem.} \emph{If $n\ge 2$ and $X$ is $(n-1)$-connected, then $h_{*} \colon \pi_{n}(X)\to H_{n}(X;\Z)$ is an isomorphism, and $h_{*}\colon \pi_{n+1}(X)\to H_{n+1}(X;\Z)$ is surjective.} \\[2mm]
For this, we refer to \cite[Chapter 8, Theorem 3]{MT1968} and \cite[Exercise 23, Section 4.2]{Hat2002}.}we have $\pi_2(\widetilde{V}) \cong H_2(\widetilde{V};\Z)$. Since $p$ is a covering map, it follows that $\pi_i(V)\cong \pi_i(\widetilde{V})$ for all $i\ge 2$. 

As $V$ is a smooth affine surface, $\widetilde{V}$ is a complex Stein manifold of dimension $2$. By a classical result of A.~Andreotti and T.~Frankel, $H_2(\widetilde{V};\Z)$ is a free abelian group and $H_k(\widetilde{V};\Z)=0$ for all $k>2$ (cf.~\cite{AF1959}). Therefore, the finiteness of $\pi_2(V)$ implies $\pi_2(V)=(1)$. Repeated application of the Hurewicz theorem and the Andreotti--Frankel result further yields $\pi_i(V)=(1)$ for all $i \ge 2$. Hence, $V$ is a $K(G,1)$-space with $G=\pi_1(V)$.

\medskip

Thus, Question~\ref{Ques : Basic Ques} motivates the following broader question, originally posed to the author by Prof.~R.~V.~Gurjar.

\begin{question}\label{Main Ques: Descent of EM}
Let $f \colon X \to Y$ be a finite surjective morphism of smooth complex affine surfaces. If $X$ is an Eilenberg--MacLane space, is $Y$ also an Eilenberg--MacLane space?
\end{question}

The answer to this question is, in general, negative. Shortly after it was posed, R.~V.~Gurjar, S.~R.~Gurjar, and the author constructed a counterexample, which appears at the end of this paper (see Example~\ref{Ex: Non-example to the main question}). To investigate this problem further, one requires explicit examples of smooth affine Eilenberg--MacLane surfaces in order to test whether the property descends under additional hypotheses. This leads naturally to the following question:

\begin{center}
\emph{Which smooth affine surfaces are Eilenberg--MacLane spaces?}
\end{center}

\medskip

In~\cite{GGH2023}, the author, together with R.~V.~Gurjar and S.~R.~Gurjar, obtained a classification of smooth affine Eilenberg--MacLane surfaces of \emph{non-general type}, i.e.~those with logarithmic Kodaira dimension $\Kbar(X) \le 1$, since the case $\Kbar(X)=2$ (surfaces of general type) remains not fully understood.

\begin{theorem}[cf. {\cite[Result 2.1]{GGH2023}}]\label{Thm: Classification of EM surfaces}
Let $X$ be a non-contractible smooth complex affine $K(G,1)$-surface. Then the following hold:
\begin{enumerate}[\indent\rm(1)]
\item If $\Kbar(X)=-\infty$, then $X$ is an $\A^1$-bundle over a smooth algebraic curve not isomorphic to $\A^1$ or $\PP^1$.
\item If $\Kbar(X)=0$, then either $X\cong \C^*\times \C^*$ or $X \cong H[-1,0,-1]$\footnote{The surface $H[-1,0,-1]$ was originally defined by T. Fujita in his famous paper \cite{Fuj1982}. For detailed description, see \cite{GGH2023}.}.\\Moreover, if $X \cong H[-1,0,-1]$, there exists a $2$-fold unramified Galois cover of $X$ isomorphic to $\C^* \times \C^*$.
\item If $\Kbar(X)=1$, then $X$ admits a $\C^*$-fibration whose reduced fibers are all isomorphic to $\C^*$. Consequently, there exists a finite unramified Galois cover of $X$ that is a $\C^*$-bundle over a smooth algebraic curve not isomorphic to $\PP^1$.
\end{enumerate}
\end{theorem}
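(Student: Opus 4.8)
The plan is to treat the three cases $\Kbar(X)\in\{-\infty,0,1\}$ separately, feeding the structure theory of open algebraic surfaces into the following dictionary, which repackages the discussion in the introduction: \emph{a smooth affine surface $X$ is an Eilenberg--MacLane space if and only if $H_2(\widetilde X;\Z)=0$}, where $\widetilde X\to X$ is the universal cover. Indeed $\widetilde X$ is a simply connected Stein surface, so $\pi_2(X)\cong\pi_2(\widetilde X)\cong H_2(\widetilde X;\Z)$ by Hurewicz, and the Andreotti--Frankel vanishing promotes $\pi_2(\widetilde X)=0$ to contractibility of $\widetilde X$. Conversely, if $X$ is a Zariski-locally trivial bundle over a smooth curve $B\not\cong\PP^1$ with fibre $F\in\{\A^1,\C^*\}$, the homotopy exact sequence together with $\pi_2(F)=\pi_2(B)=0$ gives $\pi_i(X)=0$ for $i\ge 2$, so $X$ is automatically a $K(\pi_1(X),1)$. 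Hence in each range the real content is the converse: being a non-contractible $K(G,1)$ must force the fibration supplied by the structure theory to be an honest bundle over a base that is neither $\A^1$ nor $\PP^1$.

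Assume $\Kbar(X)=-\infty$. By the structure theory of affine-ruled surfaces (Miyanishi--Sugie), $X$ carries an $\A^1$-fibration $\rho\colon X\to B$ over a smooth affine curve $B$, and on a smooth affine surface the degenerate fibres of such a $\rho$ are reduced and are disjoint unions of at least two affine lines. I would show, via a Mayer--Vietoris computation performed after base-change to a covering of $B$ on which the base becomes simply connected --- so that the several components of a degenerate fibre cannot be merged by monodromy --- that the presence of any degenerate fibre forces $H_2(\widetilde X;\Z)\neq 0$, contradicting the dictionary. Hence $\rho$ is an $\A^1$-bundle, necessarily the trivial one since $B$ is affine, so $X\cong B\times\A^1$; and $B\not\cong\A^1$ (else $X\cong\A^2$ is contractible) while $B\not\cong\PP^1$ since $B$ is affine. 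This is assertion~(1).

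Assume $\Kbar(X)=1$. The logarithmic Iitaka fibration presents $X$ as fibred over a smooth curve with general fibre $F$ satisfying $\Kbar(F)=\Kbar(X)-1=0$; as $X$ is affine, $F$ is non-complete, hence $F\cong\C^*$, and we obtain a $\C^*$-fibration $\rho\colon X\to B$. Combining the classification of degenerate fibres of $\C^*$-fibrations with the constraint $\Kbar(X)=1$ (which excludes the fibre configurations whose contraction would produce an $\A^1$-fibration, forcing $\Kbar=-\infty$) and with the dictionary of the first paragraph (which, by the same cover-and-Mayer--Vietoris device, excludes degenerate fibres carrying an $\A^1$-component or more than one component), one sees that every reduced fibre of $\rho$ is $\cong\C^*$, only multiple fibres $m_iF_i$ remaining; this is the first assertion of~(3). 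For the ``consequently'' clause I would regard $B$ as an orbifold with a cyclic point of order $m_i$ at each $b_i$, choose --- Fox-completion-style --- a finite Galois cover $B'\to B$ of smooth curves ramified exactly to order $m_i$ over $b_i$ and unramified elsewhere (possible since $B$ is non-complete, so the orbifold is good), and set $X'$ equal to the normalization of $X\times_B B'$; then $X'\to X$ is finite and unramified, $X'$ is affine, and $X'\to B'$ has all fibres reduced and $\cong\C^*$, hence is a $\C^*$-bundle; since a $\C^*$-bundle over $\PP^1$ is never affine, $B'\not\cong\PP^1$. This gives~(3). Finally, assume $\Kbar(X)=0$: here I would invoke the classification of smooth affine surfaces of log Kodaira dimension $0$ (Fujita \cite{Fuj1982}; Miyanishi--Tsunoda; and later refinements), which presents such an $X$ --- up to a short explicit list including the Fujita surfaces $H[\cdot,\cdot,\cdot]$ --- either through a $\C^*$-fibration over $\A^1$ or $\C^*$ or as a free finite quotient of a quasi-abelian surface; running the dictionary through this list leaves exactly $\C^*\times\C^*$ (whose universal cover is $\C^2$) and $H[-1,0,-1]$, the latter being by construction a free $\Z/2$-quotient of $\C^*\times\C^*$, which at once confirms that $H[-1,0,-1]$ is aspherical and exhibits the asserted $2$-fold unramified Galois cover. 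This is~(2).

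The crux throughout is the implication ``$\rho$ has a reducible, disconnected, or badly multiple fibre $\Longrightarrow H_2(\widetilde X;\Z)\neq 0$''. This cannot be argued on $X$ itself: $H_2(X;\Z)$ of a $K(G,1)$ need not vanish, and the contribution of a degenerate fibre is entangled with $\pi_1(X)$ --- as the $\C^*$-fibration $(x,y)\mapsto xy$ on the contractible surface $\A^2$, whose central fibre $\A^1\cup\A^1$ is certainly ``bad'', already illustrates. One is therefore forced to pass to the universal cover, or to a finite unramified cover unwinding precisely the multiplicities, where the base becomes simply connected, the offending fibre can no longer be absorbed by monodromy, and an honest $2$-cycle persists; getting the bookkeeping of components, multiplicities, and the induced cover of the base simultaneously right --- and identifying exactly which fibre types to exclude and under which hypothesis --- is the main technical burden. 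By comparison, the $\Kbar(X)=0$ case is only laborious in that the classification to be traversed is long, each individual surface on it being topologically transparent once written down.
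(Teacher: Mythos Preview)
First, note that the paper does not itself prove this theorem: it is quoted from \cite{GGH2023} as background input, so there is no in-paper argument against which to compare your proposal. Your overall strategy --- the dictionary ``$X$ is $K(G,1)$ $\Leftrightarrow$ $H_2(\widetilde X;\Z)=0$'' combined with the structure theory in each $\Kbar$-range, plus a covering trick to unwind multiplicities --- is indeed the right shape, and is close in spirit to how \cite{GGH2023} proceeds.

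That said, your $\Kbar(X)=-\infty$ paragraph contains two concrete errors, not merely gaps in the admitted ``crux''. You assert that the Miyanishi--Sugie $\A^1$-fibration has an \emph{affine} base and that its degenerate fibres are \emph{reduced} disjoint unions of affine lines. Both are false. The base can be projective: Example~\ref{Ex: Non-example to the main question} of this very paper exhibits a smooth affine $\A^1$-bundle over $\PP^1$, and $\A^1$-bundles over curves of positive genus can likewise be affine --- which is exactly why the statement excludes $\A^1$ \emph{and} $\PP^1$ rather than all projective curves. Your argument never touches a projective base, and your triviality conclusion ``$X\cong B\times\A^1$'' is therefore unjustified (and in fact wrong in the projective-base case). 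Second, singular fibres of an $\A^1$-fibration on a smooth affine surface need not be reduced: a fibre can be a single thick line $m\A^1$ with $m>1$ (cf.\ the proof of Theorem~\ref{Thm: Descent of A^1 bundle over affine curve}, where precisely such fibres are eliminated via Theorem~\ref{Ramified covering trick}). Your Mayer--Vietoris device is designed to detect extra components; it will not see a single $m\A^1$, so the multiplicity case must be handled separately --- and that, not the reducible case, is the delicate part.

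In the $\Kbar(X)=1$ case you assume without justification that the base $B$ of the $\C^*$-fibration is non-complete in order to invoke Fenchel (``possible since $B$ is non-complete, so the orbifold is good''). Affineness of the base is a theorem for $\C^*$-\emph{bundles} on affine surfaces, but you are at a stage where $\rho$ is still only a $\C^*$-fibration with possibly multiple fibres; you need either an argument that $B$ is non-complete regardless, or to handle the bad-orbifold cases over $\PP^1$ directly.
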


In Example~\ref{Ex: Non-example to the main question}, we will see that an Eilenberg--MacLane surface may descend under a finite surjective morphism onto a smooth affine surface homotopic to the real $2$-sphere. Nevertheless, in this paper, we identify several significant situations in which the answer to Question~\ref{Main Ques: Descent of EM} is affirmative.

\medskip

Recently, in~\cite{BH2025}, the authors established the following related result.

\begin{theorem}[cf. {\cite[Theorem G]{BH2025}}]\label{Thm: Classification of affine surfaces with finite homotopy rank-sum}
Let $X$ be a smooth complex affine surface of log non-general type satisfying the finite homotopy rank-sum property, i.e.~$X$ is such that $$\sum_{i\ge 2}\rank\pi_i(X):=\sum_{i\ge 2}\dim_{\Q}\left(\pi_i(X)\otimes_{\Q}\Q\right)$$ is finite. Then $X$ is one of the following:
\begin{enumerate}[\indent\rm(1)]
\item An Eilenberg--MacLane $K(\pi,1)$-space;
\item A smooth $\Q$-homology plane with $\pi_1(X)\cong \Z/2\Z$;
\item A surface homotopic to $S^2$.
\end{enumerate}
\end{theorem}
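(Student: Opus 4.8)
The plan is to first use covering-space theory together with the Stein condition to cut the possible homotopy types of $X$ down to a short list, and then to invoke the logarithmic structure theory of open surfaces of non-general type to see which of these are realized by algebraic surfaces.

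I would begin by passing to the universal covering $p\colon\widetilde{X}\to X$, so that $\pi_i(X)\cong\pi_i(\widetilde{X})$ for all $i\ge2$. An unramified cover of a smooth affine surface is again a Stein manifold of complex dimension $2$ and has the homotopy type of a finite $CW$ complex, which by the Andreotti--Frankel theorem may be taken of real dimension at most $2$; since $\widetilde{X}$ is simply connected with torsion-free $H_2$, it is therefore homotopy equivalent to a wedge $\bigvee_{j=1}^{r}S^2$ with $r=\rank H_2(\widetilde{X};\Z)=\rank\pi_2(X)$. Here rational homotopy theory does the work: when $r\ge2$ the rational homotopy groups of $\bigvee_{j=1}^{r}S^2$ assemble into the free graded Lie algebra on $r$ generators of degree $1$ (Hilton--Milnor), whose total dimension is infinite, so that $\sum_{i\ge2}\rank\pi_i(X)=\infty$, contradicting the hypothesis; hence $r\le1$. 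If $r=0$, then $\widetilde{X}$ is contractible and $X$ is a $K(\pi_1(X),1)$-space, which is conclusion~(1). So one is left with the case $\widetilde{X}\simeq S^2$, in which $\pi_2(X)\cong\pi_3(X)\cong\Z$ and $\pi_i(X)\otimes\Q=0$ for $i\ge4$.

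Now write $G=\pi_1(X)$ and assume $\widetilde{X}\simeq S^2$. The crucial step---and, I expect, the main obstacle---is to prove that $G$ is finite. Pure homotopy theory already constrains $G$ tightly: the cellular chain complex of $\widetilde{X}$, regarded over $\Q[G]$, is a finite complex of finitely generated free modules in degrees $0,1,2$ with homology $\Q$, $0$, $\Q_\epsilon$, where $\Q_\epsilon$ denotes $\Q$ with $G$ acting through the orientation character $\epsilon\colon G\to\operatorname{Aut}(H_2(\widetilde{X};\Z))=\{\pm1\}$; this gives a four-term exact sequence $0\to\Q_\epsilon\to C_2\to C_1\to C_0\to\Q\to0$, which upon splicing with its $\Q_\epsilon$-twist shows that the trivial module $\Q$ admits a $6$-periodic free resolution over $\Q[G]$, whence $H^k(G;\Q)$ is periodic in $k\ge1$. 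To exclude an infinite $G$ one must then bring in the geometry: $\pi_1$ of a smooth affine surface is finitely presented, and the classification of open surfaces with $\Kbar(X)\le1$---affine-ruledness and the structure of the $\A^1$-fibration when $\Kbar(X)=-\infty$, the almost-minimal model and the non-general-type classification otherwise---leaves no room for such a group to be infinite. This is exactly where the hypothesis $\Kbar(X)\le1$ is essential, and where the argument must lean on the existing surface theory rather than on topology alone.

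Finally, granting that $G$ is finite, transfer gives $H_*(X;\Q)\cong H_*(\widetilde{X};\Q)^{G}$ and hence $\chi(X)=\chi(\widetilde{X})/|G|=2/|G|$; but also $\chi(X)=1+b_2(X)$ with $b_2(X)\in\{0,1\}$ according as the character $\epsilon$ is nontrivial or trivial, so $|G|\in\{1,2\}$. If $|G|=1$, then $X\simeq\widetilde{X}\simeq S^2$, which is conclusion~(3). If $|G|=2$, then $\epsilon$ is forced to be nontrivial, so $H_i(X;\Q)=0$ for $i>0$, while $H_1(X;\Z)=G^{\mathrm{ab}}\cong\Z/2\Z$ and $H_i(X;\Z)=0$ for $i\ge3$; thus $X$ is a smooth $\Q$-homology plane with $\pi_1(X)\cong\Z/2\Z$, which is conclusion~(2). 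To close, I would check that each of the three cases genuinely occurs---$\C^*\times\C^*$ for~(1); the smooth affine quadric $\{x^2+y^2+z^2=1\}\subset\C^3$, diffeomorphic to the tangent bundle $TS^2$ and hence homotopy equivalent to $S^2$, for~(3); and its quotient by the fixed-point-free involution $v\mapsto-v$ (which acts on $H_2$ by $-1$) for~(2)---so that the list is both exhaustive and non-redundant.
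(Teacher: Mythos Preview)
This theorem is not proved in the present paper: it is quoted as Theorem~G of \cite{BH2025} and then used as a black box in the descent arguments that follow. There is therefore no proof here against which to compare your attempt, and I can only assess the proposal on its own merits.

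The opening reduction is sound, with one slip. Your assertion that the universal cover $\widetilde{X}$ ``has the homotopy type of a finite CW complex'' is not justified when $\pi_1(X)$ is infinite---an infinite-sheeted cover of a finite complex need not have finite type---but you do not actually need it. Andreotti--Frankel gives a (possibly infinite) CW model of real dimension at most~$2$ for the Stein manifold $\widetilde{X}$; since $\widetilde{X}$ is simply connected with $H_2(\widetilde{X};\Z)$ free of rank $r=\rank\pi_2(X)$, and $r<\infty$ is part of the hypothesis, one gets $\widetilde{X}\simeq\bigvee^{r}S^2$ directly. The Hilton--Milnor step forcing $r\le1$ is correct, as is the closing Euler-characteristic computation once $G=\pi_1(X)$ is known to be finite.

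The genuine gap is precisely where you locate it. Your observation that splicing the equivariant cellular chain complex of $\widetilde{X}$ with its $\epsilon$-twist yields a periodic free $\Q[G]$-resolution of $\Q$, and hence periodic $H^{*}(G;\Q)$, is valid---but periodicity alone does not exclude infinite $G$. For instance, any $G$ of finite rational cohomological dimension (such as $G=\Z$, with $H^{i}(\Z;\Q)=0$ for $i\ge2$) has trivially periodic cohomology; to extract finiteness from your resolution you would additionally need to know $\operatorname{cd}_{\Q}(G)<\infty$, which is not automatic for fundamental groups of affine surfaces. You then write that the structure theory of open surfaces with $\Kbar\le1$ ``leaves no room for such a group to be infinite,'' but this is an assertion, not an argument. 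Carrying it out is presumably the substance of \cite{BH2025}: one must treat $\Kbar=-\infty$, $0$, $1$ separately, using respectively the $\A^1$-fibration structure, the Fujita--Kojima classification of almost-minimal models, and the $\C^*$-fibration structure, and in each case show that an infinite $\pi_1$ forces $\widetilde{X}$ to be contractible rather than a homotopy $S^2$. Without that case analysis the proof is incomplete.
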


In view of Theorem~\ref{Thm: Classification of affine surfaces with finite homotopy rank-sum} and Example~\ref{Ex: Non-example to the main question}, we are led to the following question.

\begin{question}\label{Main Ques: Descent of finite homotopy rank-sum}
Let $f \colon X \to Y$ be a finite surjective morphism of smooth complex affine surfaces. If $X$ satisfies the finite homotopy rank-sum property, does $Y$ also satisfy it?
\end{question}

In the present work, we answer Question~\ref{Main Ques: Descent of finite homotopy rank-sum} affirmatively in the case where $\Kbar(X)\le 0$. The general case remains open.

\medskip

We prove the following theorem showing how in certain situations Question \ref{Main Ques: Descent of EM} may have an affirmative answer.

\begin{mainthm}[{Theorem \ref{Thm: Descent of A^1 bundle over affine curve}}]\label{thm:mainA}
Let $X$ be a Zariski-locally trivial $\A^1$-bundle over a smooth affine irreducible curve. Suppose $f \colon X \to Y$ is a finite surjective morphism onto a smooth affine surface. Then $Y$ is a Zariski-locally trivial $\A^1$-bundle over a smooth affine irreducible curve.
\end{mainthm}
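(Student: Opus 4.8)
The plan is to reduce to a statement about the topology and the logarithmic structure of $Y$, then invoke the classification of smooth affine surfaces with small log Kodaira dimension. First I would record the topological consequences of the hypothesis on $X$. A Zariski-locally trivial $\A^1$-bundle over a smooth affine irreducible curve $C$ is homotopy equivalent to $C$, hence it is a $K(\pi,1)$-space; in particular $X$ satisfies the finite homotopy rank-sum property and has $\Kbar(X)=-\infty$ when $C\not\cong\A^1$ (and $X\cong\A^2$ when $C\cong\A^1$, a case one handles separately since then $Y$ is a smooth affine surface properly dominated by $\A^2$, forcing $Y\cong\A^2$ by a standard argument using $\Kbar$ and the Abhyankar--Moh--Suzuki/Miyanishi-type results). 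Since finite morphisms do not raise log Kodaira dimension (by the logarithmic ramification formula: $\Kbar(X)\ge\Kbar(Y)$ for a finite surjective $f\colon X\to Y$), we get $\Kbar(Y)\le -\infty$, i.e.\ $\Kbar(Y)=-\infty$.

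Next I would pin down $Y$ using Theorem~\ref{Thm: Classification of affine surfaces with finite homotopy rank-sum}, for which I first need that $Y$ inherits the finite homotopy rank-sum property. This is exactly the content the paper is building toward: the case $\Kbar(X)\le 0$ of Question~\ref{Main Ques: Descent of finite homotopy rank-sum}. Granting it (or, in this restricted setting, arguing directly: $\widetilde Y$ is Stein of dimension $2$ so $\pi_i(Y)\cong\pi_i(\widetilde Y)$ vanish for $i>2$ beyond $H_2$, and one controls $\rank\pi_2(Y)$ via $b_2(\widetilde Y)$, using that $\widetilde X\to\widetilde Y$ or a suitable pullback cover is again finite), we conclude $Y$ is either a $K(\pi,1)$, a $\Q$-homology plane with $\pi_1\cong\Z/2$, or homotopic to $S^2$. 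The last two options both have $\Kbar\ge 0$: a $\Q$-homology plane has $\Kbar\ge 0$ unless it is $\A^2$, and a surface homotopic to $S^2$ cannot have $\Kbar=-\infty$ (such surfaces are $\A^1$- or $\C^*$-fibered and hence $K(\pi,1)$ or contractible-to-a-point-at-worst, never $S^2$). Since we already know $\Kbar(Y)=-\infty$, only the $K(\pi,1)$ case survives, and then part~(1) of Theorem~\ref{Thm: Classification of EM surfaces} forces $Y$ to be an $\A^1$-bundle over a smooth algebraic curve $D\not\cong\A^1,\PP^1$.

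It then remains to upgrade ``$\A^1$-bundle over a curve $D$'' to ``\emph{Zariski-locally trivial} $\A^1$-bundle over a smooth \emph{affine} irreducible curve.'' Affineness of $D$ follows because $Y$ is affine and the bundle projection $Y\to D$ is an affine morphism with affine total space, so $D$ is affine; irreducibility is clear since $Y$ is irreducible. For Zariski-local triviality I would use that an $\A^1$-bundle over a smooth affine curve is automatically Zariski-locally trivial --- over a smooth affine curve every $\A^1$-bundle is a line bundle torsor, and since $\operatorname{Pic}$ issues only affect the torsor class, one checks the bundle is in fact trivial over a suitable affine open cover (this is the classical fact that $\A^1$-fibrations over affine curves with no singular fibers are Zariski-locally trivial; cf.\ Miyanishi's theory of $\A^1$-fibrations). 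Alternatively, one transports Zariski-local triviality from $X$ along $f$: over a Zariski-open $U\subseteq D$ trivializing the bundle, the preimage in $X$ together with the known trivialization structure on $X$ forces the corresponding local triviality downstairs after shrinking.

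The main obstacle I expect is precisely the inheritance of the finite homotopy rank-sum property by $Y$ --- controlling $\rank\pi_2(Y)$ under the finite morphism when $\widetilde X\to X$ and $\widetilde Y\to Y$ need not fit into a tower (the pullback of $\widetilde Y$ to $X$ may be disconnected, and $\widetilde X$ may map to it nontrivially). One gets around this by passing to a common finite cover: the normalization of $X\times_Y\widetilde Y$ dominates $\widetilde Y$ finitely, is Stein of dimension $2$, and is dominated by a finite cover of $\widetilde X$; comparing second Betti numbers through these finite maps (each of which multiplies Euler characteristics and hence bounds $b_2$) yields finiteness of $\sum_{i\ge 2}\rank\pi_i(Y)$. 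Everything else is either a citation to the two classification theorems quoted above or a routine check on the logarithmic and fibration structure.
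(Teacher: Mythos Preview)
Your proposal has two genuine gaps.

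First, and most seriously, your elimination of the non-$K(\pi,1)$ cases by log Kodaira dimension is incorrect. You assert that ``a surface homotopic to $S^2$ cannot have $\Kbar=-\infty$,'' but the paper's own Example~\ref{Ex: Non-example to the main question} furnishes exactly such a surface: $Y=(\PP^1\times\PP^1)\setminus\Delta$ is a Zariski-locally trivial $\A^1$-bundle over $\PP^1$, is homotopy equivalent to $S^2$, and has $\Kbar(Y)=-\infty$. Likewise, smooth $\Q$-homology planes with $\Kbar=-\infty$ other than $\A^2$ do exist (with cyclic $\pi_1$), so your claim ``$\Q$-homology plane has $\Kbar\ge 0$ unless it is $\A^2$'' also fails. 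The correct way to exclude the $S^2$ case here is via $b_2$: since $X$ is an $\A^1$-bundle over an \emph{affine} curve one has $b_2(X)=0$, whence $b_2(Y)=0$ by Lemma~\ref{Lem: Finite morphism basic properties}(2), ruling out $S^2$. But this still leaves the $\Q$-homology plane case unaddressed by your argument.

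Second, your strategy is circular relative to the paper's architecture. You want to invoke descent of the finite homotopy rank-sum property (Theorem~\ref{thm:mainC}) to prove Theorem~\ref{thm:mainA}, but the paper proves Theorem~\ref{thm:mainA} first and then \emph{uses} it as the key input for the $\Kbar=-\infty$ case of Theorem~\ref{thm:mainC} (see the proof of Theorem~\ref{Main Theorem - kappa bar negative case}). Your sketch of an independent proof of rank-sum descent via Betti-number comparisons on covers is too vague to close this loop: bounding $b_2$ of finite covers of $Y$ does not by itself bound $b_2(\wt{Y})$ when $\pi_1(Y)$ is infinite.

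The paper's actual argument is direct and avoids both issues. After the Stein-factorisation reduction (Remark~\ref{Rem: pi_1 level map is surjective}) to $h\colon X\to Z$ with $h_\ast$ surjective on $\pi_1$, one knows $\Kbar(Z)=-\infty$, so $Z$ carries an $\A^1$-fibration $\varphi\colon Z\to D$. Suzuki's formula (Theorem~\ref{Suzuki's formula}) combined with $b_2(Z)\le b_2(X)=0$ forces $D$ to be affine and every fibre of $\varphi$ to be irreducible, hence of the form $m\A^1$. To kill possible multiple fibres one applies the ramified covering trick (Theorem~\ref{Ramified covering trick}): the resulting \'etale cover $Z'\to Z$ pulls back to an \'etale cover $X'\to X$, and by Proposition~\ref{Prop: Etale cover of A^1-bundle preserves the bundle structure with the nature of base curve} $X'$ is again an $\A^1$-bundle over a smooth affine curve. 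Rerunning the Suzuki argument on $Z'$ shows $\varphi'$ has no reducible fibres, which contradicts the splitting behaviour of multiple fibres under the covering trick unless $\varphi$ had none to begin with. Hence $\varphi$ is Zariski-locally trivial, and the \'etale descent from $Z$ to $Y$ finishes the proof.
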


This result extends an earlier theorem of M.~Furushima, who classified all normal affine surfaces properly dominated by $\C\times \C^*$ (see the Main Theorem in~\cite{Fur1989}).

\medskip

We also obtain the following classification results for smooth affine surfaces properly dominated by $\C^*\times \C^*$.

\begin{mainthm}[{Theorem \ref{Thm: Affine surfaces with kappa=0 properly dominated by 2-torus}, \ref{Thm: Affine surfaces with kappa negative properly dominated by 2-torus}}]\label{thm:mainB}
Let $Y$ be a smooth affine surface properly dominated by $\C^*\times \C^*$.
\begin{enumerate}[\indent\rm(1)]
\item If $\Kbar(Y)=-\infty$, then $Y$ is isomorphic to either $\A^2$ or $\A^1\times \C^*$.

\medskip

\item If $\Kbar(Y)=0$, then $Y$ is one of the following: 

\medskip

    \begin{enumerate}[\indent]
        \item[\rm(2a)] $\C^*\times \C^*$;
        \item[\rm(2b)] Fujita’s surface $H[-1,0,-1]$;
        \item[\rm(2c)] A surface homotopic to $S^2$ that belongs to $\mathcal{S}_0$---the class defined in \cite{JSXZ2024};
        \item[\rm(2d)] A smooth $\Q$-homology plane with $\pi_1(Y)\cong \Z/2\Z$.
    \end{enumerate}
\end{enumerate}
\end{mainthm}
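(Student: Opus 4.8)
The plan is to handle the two cases $\Kbar(Y) = -\infty$ and $\Kbar(Y) = 0$ separately, using in each case the classification results quoted above together with the behaviour of logarithmic Kodaira dimension under finite morphisms. The starting point in both cases is the observation that $X = \C^* \times \C^*$ satisfies the finite homotopy rank-sum property trivially: it is a $K(\Z^2, 1)$, so all higher homotopy groups vanish and the rank-sum is zero. Hence, by Question~\ref{Main Ques: Descent of finite homotopy rank-sum} in the range $\Kbar(X) \le 0$ — which we are assuming is answered affirmatively earlier in the paper — $Y$ also satisfies the finite homotopy rank-sum property. Moreover $\Kbar(\C^* \times \C^*) = 0$, and since finite morphisms do not increase log Kodaira dimension of smooth affine surfaces (a standard fact: pulling back a log canonical divisor under a generically finite map, and the ramification formula, gives $\Kbar(X) \ge \Kbar(Y)$ when $f\colon X \to Y$ is finite surjective between smooth affine surfaces), we get $\Kbar(Y) \le 0$. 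So $Y$ is of log non-general type, and Theorem~\ref{Thm: Classification of affine surfaces with finite homotopy rank-sum} applies: $Y$ is either a $K(\pi,1)$, a $\Q$-homology plane with $\pi_1 \cong \Z/2\Z$, or homotopic to $S^2$.

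For part (1), assume $\Kbar(Y) = -\infty$. A $\Q$-homology plane and an $S^2$-homotopic surface both have $\Kbar \le 2$ but the relevant ones in the classification with $\Kbar = -\infty$ are very restricted; in fact a smooth affine surface with $\Kbar = -\infty$ that is a $K(\pi,1)$ is, by Theorem~\ref{Thm: Classification of EM surfaces}(1), a Zariski-locally trivial $\A^1$-bundle over a smooth affine curve $C \not\cong \A^1$. I would then invoke Theorem~A (Theorem~\ref{thm:mainA}): since $\C^* \times \C^* \cong \A^1\text{-bundle over }\C^*$ wait — rather, I should argue directly. The cleaner route: $Y$ has $\Kbar = -\infty$ and admits a finite cover by $\C^* \times \C^*$, which has abelian (hence amenable) $\pi_1$ and logarithmic Kodaira dimension $0$; by the structure theory of affine surfaces of negative log Kodaira dimension, $Y$ carries an $\A^1$-fibration, and the finite map from $\C^*\times\C^*$ forces the base and the generic fibre structure to be as simple as possible. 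One shows the base must be $\A^1$ or $\C^*$ and the fibration is an $\A^1$-bundle, yielding $Y \cong \A^2$ or $Y \cong \A^1 \times \C^*$; the $\Q$-homology plane and $S^2$ cases are excluded because they would force $\pi_1(Y)$ finite, but then $\pi_1(\C^*\times\C^*) = \Z^2$ would inject with finite index by Serre's theorem — impossible.

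For part (2), assume $\Kbar(Y) = 0$. Theorem~\ref{Thm: Classification of affine surfaces with finite homotopy rank-sum} already gives the list: either $Y$ is a $K(\pi,1)$, or a $\Q$-homology plane with $\pi_1(Y)\cong\Z/2\Z$, or homotopic to $S^2$. If $Y$ is a $K(\pi,1)$ with $\Kbar(Y) = 0$, Theorem~\ref{Thm: Classification of EM surfaces}(2) pins it down to $\C^*\times\C^*$ (case 2a) or Fujita's $H[-1,0,-1]$ (case 2b), and both of these do genuinely occur — $\C^*\times\C^*$ trivially, and $H[-1,0,-1]$ because it has a $2$-fold cover isomorphic to $\C^*\times\C^*$, which one checks extends to a proper domination by composing. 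If $Y$ is homotopic to $S^2$, one must show it lies in the class $\mathcal{S}_0$ of \cite{JSXZ2024}: here I would use that $Y$ is properly dominated by a surface with $\Kbar = 0$ and abelian $\pi_1$, and match against the description of $\mathcal{S}_0$ — the main technical point being to verify the numerical/structural invariants (the boundary graph at infinity of a smooth completion, the Picard number, the multiplicity data of a $\C^*$- or $\A^1$-fibration) of such a $Y$ coincide with those defining $\mathcal{S}_0$ and to exhibit that each member of $\mathcal{S}_0$ is indeed properly dominated by $\C^*\times\C^*$. The $\Q$-homology plane case (2d) is handled by noting such surfaces with $\pi_1\cong\Z/2\Z$ and $\Kbar = 0$ exist and admit an unramified double cover which is an $S^2$-homotopic or torus-related surface dominated by $\C^*\times\C^*$.

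The main obstacle I expect is case (2c): showing that an $S^2$-homotopic surface properly dominated by $\C^*\times\C^*$ must lie in the specific class $\mathcal{S}_0$, and conversely that every surface in $\mathcal{S}_0$ admits such a domination. This requires a careful analysis of the finite morphism $f\colon \C^*\times\C^*\to Y$ — its ramification divisor, the induced map on fibrations, and the compatibility with a smooth completion — rather than a formal deduction from the classification theorems, since those only tell us $Y$ is homotopic to $S^2$, not which deformation/isomorphism class it falls into. A secondary subtlety is ruling out, in part (1), spurious possibilities coming from $\A^1$-fibrations with multiple fibres over $\A^1$ or $\C^*$: one needs the covering $\C^*\times\C^*$ to have no such multiple fibres to conclude that $Y$'s fibration is an honest $\A^1$-bundle, which again uses purity of the branch locus and the triviality of $\pi_2(\C^*\times\C^*)$.
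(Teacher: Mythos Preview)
Your approach has a circularity problem. You invoke the affirmative answer to Question~\ref{Main Ques: Descent of finite homotopy rank-sum} in the range $\Kbar(X)\le 0$ as something ``answered affirmatively earlier in the paper,'' and then deduce Theorem~B from Theorem~\ref{Thm: Classification of affine surfaces with finite homotopy rank-sum}. But in the paper the logical flow is the reverse: Theorem~B is proved first by direct structural arguments, and only afterwards is it used as the key input to establish the descent of the finite homotopy rank-sum property (Theorem~C). So your starting assumption is not available.

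Even setting the circularity aside, your exclusion of the $S^2$-homotopic and $\Q$-homology-plane cases in part~(1) is incorrect: Serre's result (Lemma~\ref{Lem: surjective induced map at the level of pi_1}) says the \emph{image} of $f_\ast$ has finite index in $\pi_1(Y)$, not that $f_\ast$ is injective, so $\pi_1(Y)$ finite is in no way contradicted by $\pi_1(\C^*\times\C^*)\cong\Z^2$. The paper instead argues directly: after the standard factorization $f=g\circ h$ with $h_\ast$ surjective, one uses that $\C^*\times\C^*$ is factorial to force $Z$ to be $\Q$-factorial, then Lemma~\ref{Lem: Q-factorial fibration} shows the $\A^1$-fibration on $Z$ has affine rational base and irreducible fibres; the ramified covering trick rules out multiple fibres, yielding $Z\cong\A^2$ or $\A^1\times\C^*$. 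For part~(2) the paper likewise avoids the rank-sum classification and works instead with Kojima's list of strongly minimal $\Kbar=0$ surfaces together with two ad hoc exclusion results (Propositions~\ref{Prop: Affine surface properly dominated by 2-torus and with finite pi_1} and~\ref{Prop: No finite map from 2-torus to surface having infinite cyclic pi_1 and positive Euler characteristic}), the first of which is where membership in $\mathcal{S}_0$ is actually established via factoriality of the universal cover.
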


\medskip

Finally, we prove the following theorem, which provides an affirmative answer to Question~\ref{Main Ques: Descent of finite homotopy rank-sum} in the case $\Kbar(X)\leq 0$, as desired.

\begin{mainthm}[{Theorem \ref{Main Theorem - kappa bar negative case}, \ref{Main Theorem - kappa bar = 0 case}}]\label{thm:mainC}
Let $f \colon X \to Y$ be a finite surjective morphism of smooth affine surfaces. If $\Kbar(X)\leq0$ and $X$ satisfies the finite homotopy rank-sum property, then so does $Y$.
\end{mainthm}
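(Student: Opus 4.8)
The plan is to split according to the logarithmic Kodaira dimension of $X$ and exploit the fact that, under a finite surjective morphism, $\Kbar$ is non-increasing, so $\Kbar(Y)\le\Kbar(X)\le 0$. First consider the case $\Kbar(X)=-\infty$. A smooth affine surface with finite homotopy rank-sum and $\Kbar=-\infty$ falls under Theorem~\ref{Thm: Classification of affine surfaces with finite homotopy rank-sum}; since a surface homotopic to $S^2$ or a $\Q$-homology plane with $\pi_1\cong\Z/2\Z$ has $\Kbar\ge 0$ (both are known to have non-negative log Kodaira dimension, being neither $\A^1$-ruled in the relevant sense nor $\A^2$), the surface $X$ must be an Eilenberg--MacLane $K(\pi,1)$-space, hence by Theorem~\ref{Thm: Classification of EM surfaces}(1) a Zariski-locally trivial $\A^1$-bundle over a smooth affine curve not isomorphic to $\A^1$ or $\PP^1$. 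At this point Theorem~\ref{thm:mainA} applies directly: $Y$ is again a Zariski-locally trivial $\A^1$-bundle over a smooth affine irreducible curve, which is a $K(\pi,1)$-space and in particular satisfies the finite homotopy rank-sum property (indeed $\rank\pi_i=0$ for $i\ge 2$). One must also dispose of the contractible case $X\cong\A^2$, where $\Kbar(X)=-\infty$ as well; here $Y$ is properly dominated by $\A^2$, but any smooth affine surface properly dominated by $\A^2$ is known (via Serre's theorem on $\pi_1$ and Miyanishi-type arguments) to have $\Kbar(Y)=-\infty$ and in fact to be $\A^1$-ruled, so again $Y$ has finite homotopy rank-sum.

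Next consider the harder case $\Kbar(X)=0$. By Theorem~\ref{Thm: Classification of affine surfaces with finite homotopy rank-sum}, $X$ is one of: an EM space (so by Theorem~\ref{Thm: Classification of EM surfaces}(2), either $\C^*\times\C^*$ or Fujita's $H[-1,0,-1]$), a $\Q$-homology plane with $\pi_1\cong\Z/2\Z$, or a surface homotopic to $S^2$. In the EM subcase we are reduced to understanding surfaces properly dominated by $\C^*\times\C^*$ (directly, or after passing through the $2$-fold cover in the $H[-1,0,-1]$ case): by Theorem~\ref{thm:mainB} the target $Y$ is $\C^*\times\C^*$, $H[-1,0,-1]$, a surface homotopic to $S^2$, or a $\Q$-homology plane with $\pi_1\cong\Z/2\Z$ — in every case finite homotopy rank-sum holds (the first two are $K(\pi,1)$; the last two have, by definition, only finitely many nonzero rational homotopy ranks, as recorded in Theorem~\ref{Thm: Classification of affine surfaces with finite homotopy rank-sum}). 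For the remaining two subcases, where $X$ is itself a $\Q$-homology plane with $\pi_1\cong\Z/2\Z$ or a surface homotopic to $S^2$, the strategy is to pass to a finite unramified cover: such an $X$ admits a finite étale cover $X'$ that is either contractible or more tractable. Concretely, a $\Q$-homology plane with $\pi_1\cong\Z/2\Z$ has a double étale cover which is a $\Z$-homology plane, hence contractible by the Ramanujam--Gurjar--Miyanishi theory; a surface homotopic to $S^2$ in the class $\mathcal{S}_0$ likewise has an étale double cover with well-understood topology. Forming the fiber product $X'\times_Y Y'$ over a suitable normalization, one obtains a finite surjective morphism $X'\to Y'$ onto a cover $Y'$ of $Y$ to which the EM (or already-proven) case applies, and then one descends the finite homotopy rank-sum property from $Y'$ to $Y$ along a finite étale map — legitimate since a finite covering induces isomorphisms on $\pi_i$ for $i\ge 2$, so the rational homotopy ranks of $Y$ and $Y'$ coincide.

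The main obstacle will be the last step: handling the subcases in which $X$ is a $\Q$-homology plane with $\pi_1\cong\Z/2\Z$ or a surface homotopic to $S^2$, because there the classification of what can lie \emph{below} $X$ under a finite morphism is not immediately supplied by Theorems~\ref{thm:mainA} and~\ref{thm:mainB}, which are phrased for $\A^1$-bundles and for domination by $\C^*\times\C^*$. One must either (i) argue that $\Kbar(Y)=0$ forces $Y$ to be one of the four surfaces in Theorem~\ref{thm:mainB}(2) by an independent Kodaira-dimension-$0$ classification for smooth affine surfaces carrying a finite map from such an $X$ — using the structure theory of affine surfaces with $\Kbar=0$ (they are, after an étale cover, related to $\C^*\times\C^*$, abelian-surface quotients, or Platonic-type $\C^*$-fibrations) together with the constraint that $\pi_2(X)$ has finite rank; or (ii) reduce to the EM case by the étale-cover-and-fiber-product construction sketched above, which requires checking that the cover $Y'$ of $Y$ one produces is again smooth affine and that its log Kodaira dimension is still $\le 0$ (true, since étale covers preserve $\Kbar$) so that Theorem~\ref{thm:mainB} or the already-settled EM case genuinely applies. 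Ensuring that the fiber product is irreducible and that one can pass to an irreducible component dominating both factors — i.e. the bookkeeping of Galois closures and connected components — is the delicate technical point, but it is routine once set up carefully; the conceptual content is entirely carried by the three cited classification theorems.
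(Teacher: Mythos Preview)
Your argument for the $\Kbar(X)=-\infty$ case rests on a false premise. You assert that a smooth affine surface homotopic to $S^2$ or a $\Q$-homology plane with $\pi_1\cong\Z/2\Z$ must have $\Kbar\ge 0$; but Example~\ref{Ex: Non-example to the main question} in this very paper exhibits $Y=(\PP^1\times\PP^1)\setminus\Delta$, which is an $\A^1$-bundle over $\PP^1$, hence has $\Kbar=-\infty$, and is homotopic to $S^2$. So when $\Kbar(X)=-\infty$ all three alternatives in Theorem~\ref{Thm: Classification of affine surfaces with finite homotopy rank-sum} genuinely occur, and you cannot collapse everything to Theorem~\ref{thm:mainA}. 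The paper handles the $S^2$-homotopy case (and, after passing to the double cover, the $\Q$-homology-plane case) via the separate descent result Theorem~\ref{Thm: proper descent of homotopy sphere}. You have also overlooked the subcase where $X$ is an $\A^1$-bundle over a smooth \emph{projective} curve of positive genus (allowed by Theorem~\ref{Thm: Classification of EM surfaces}(1)); Theorem~\ref{thm:mainA} is stated only for affine base, and the projective-base situation requires the substantial additional Proposition following Theorem~\ref{Main Theorem - kappa bar negative case}, which runs a Suzuki-formula and ramified-covering-trick argument to force $Y$ to be either $K(\pi,1)$ or homotopic to $S^2$.

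In the $\Kbar(X)=0$ case your reduction for the last two subcases is also off. The double \'etale cover of a $\Q$-homology plane with $\pi_1\cong\Z/2\Z$ is \emph{not} a $\Z$-homology plane: it is simply connected with Euler characteristic $2$, hence $b_2=1$, and is homotopic to $S^2$ (as the paper notes explicitly). So your proposed passage to a ``contractible'' or ``EM'' cover lands you back in the $S^2$-homotopy case, which you have not yet treated --- the reduction is circular. The paper's route is again to invoke Theorem~\ref{Thm: proper descent of homotopy sphere} directly for both the $S^2$ and the $\Q$-homology-plane subcases, bypassing any fiber-product gymnastics. Your treatment of the $\C^*\times\C^*$ and $H[-1,0,-1]$ subcases via Theorem~\ref{thm:mainB} is correct and matches the paper.
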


\section{\bf Preliminaries}
{\bf [A] --} Firstly, we will list certain notations and conventions that we follow throughout this article.

\subsection{Notations \& Conventions}
\begin{enumerate}[\indent$\bullet$]

\item If $X$ is a simplicial complex or a finite CW complex, then we denote by 
$\pi_i(X)$, $H_i(X; G)$ (resp. $H^i(X; G)$), and $\wt{H}_i(X; \Z)$ (for $i \in \Z_{\ge 0}$) 
the $i$-th homotopy group, the $i$-th homology (resp. cohomology) group with coefficients in $G$, 
and the $i$-th reduced integral homology group of $X$, respectively.

\item By the phrase \emph{higher homotopy groups} of $X$, we mean the groups $\pi_i(X)$ for all $i \ge 2$.

\item[$\bullet$] Throughout, we work with smooth algebraic surfaces and their morphisms defined over the field of complex numbers~$\C$.

\item The $n$-dimensional affine and projective spaces over $\C$ are denoted by $\A^n$ and $\PP^n$, respectively.

\item For any integer $N \ge 0$, let $\C^{N*}$ denote the complex affine line with $N$ points removed. 
When $N = 0$, we have $\C^{0*} = \C$. 
For $N = 1$ and $N = 2$, we write $\C^*$ and $\C^{**}$, respectively, instead of $\C^{1*}$ and $\C^{2*}$.

\item For a smooth non-complete surface $S$, we use the following notation:
\[
\Kbar(S) : \text{the logarithmic Kodaira dimension of } S.
\]
For the definition of $\Kbar$, see~\cite{Iit1982}.

\item For a smooth complex algebraic variety $X$, we adopt the following notation:
\[
b_i(X) \; (i \in \Z_{>0}) : \text{the $i$-th Betti number } \dim_{\C} H_i(X, \C),
\]
\[
e(X) : \text{the Euler--Poincaré characteristic of } X, \text{ defined by } 
e(X) = \sum_{i = 0}^{\infty} (-1)^i b_i(X).
\]

\item For a smooth complex affine variety $X$, we reserve the following notation:
\[
\rho(X) := \rank \Pic(X) = \dim_\Q \left(\Pic(X) \otimes_\Z \Q\right),
\]
\[
\gamma(X) := \rank \left( \Gamma(X, \mathcal{O}_X)^\ast / \C^\ast \right),
\]
where $\Gamma(X, \mathcal{O}_X)^\ast / \C^\ast$ is a free abelian group of finite rank.
\end{enumerate}

\medskip

{\bf [B] --} We will use the following well-known results in our latter proofs.
	
\begin{lemma}[J.P. Serre, cf. \cite{Ser1959}]\label{Lem: surjective induced map at the level of pi_1}
	Let $\varphi:X \to Y$ be a dominant morphism of normal, irreducible algebraic varieties over $\C$. Then the following are true.
	\begin{enumerate}
		\item[\rm (1)] Image of the induced map $\varphi_*:\pi_1(X) \to \pi_1(Y)$ has finite index in $\pi_1(Y)$. In fact, if $\varphi$ is quasi-finite the index $[\pi_1(Y):\varphi_*(\pi_1(X))]$ divides $\deg \varphi$.
		\item[\rm (2)] If the general fiber of $\varphi$ is irreducible, then $\varphi_*$ is onto.
	\end{enumerate}
	Consequently, if $X$ is a normal algebraic variety and $Z$ is a proper closed subvariety of $X$, then the natural homomorphism $\pi_1(X-Z) \to \pi_1(X)$ is surjective. Moreover, if $X$ is smooth and $\codim_X Z \geq 2$, then this homomorphism is an isomorphism.
\end{lemma}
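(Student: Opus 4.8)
The plan is to reduce both parts to the two ``extreme'' shapes of a dominant morphism by means of a Stein-type factorization, and then to dispatch each extreme by elementary covering-space and general-position arguments. Working over the characteristic-zero field $\C$, factor $\varphi$ as $X\xrightarrow{\varphi_{1}}Z\xrightarrow{\varphi_{2}}Y$, where $Z$ is the normalization of $Y$ in the algebraic closure $L$ of $\C(Y)$ inside $\C(X)$. Then $\varphi_{2}$ is finite surjective (since $[L:\C(Y)]<\infty$), while $\varphi_{1}$ is dominant with $\C(Z)=L$ algebraically closed in $\C(X)$; in characteristic zero this forces the general fibre of $\varphi_{1}$ to be geometrically irreducible. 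That the resulting map from $X$ to $Z$ over $Y$ is a genuine morphism rather than merely rational uses the normality of $X$ together with the fact that a finite birational morphism onto a normal variety is an isomorphism (apply this to the component of $X\times_{Y}Z$ carrying the function field $\C(X)$).

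The first and main ingredient is, in essence, part (2) itself: \emph{a dominant morphism $\psi\colon A\to B$ of normal irreducible varieties with irreducible general fibre induces a surjection $\psi_{*}\colon\pi_{1}(A)\to\pi_{1}(B)$.} For this I would invoke generic topological local triviality of morphisms of varieties: there is a dense Zariski-open $U\subseteq B$ over which $\psi$ restricts to a locally trivial fibre bundle with connected fibre $F$ --- connected because the general fibre is irreducible --- and connected base $U$, so the homotopy exact sequence of the bundle shows $\pi_{1}(\psi^{-1}(U))\to\pi_{1}(U)$ is onto. Combining this with the standard fact that the inclusion of the complement of a proper closed subvariety of a normal variety induces a $\pi_{1}$-surjection --- applied to $\psi^{-1}(U)\subseteq A$ (its complement being proper closed since $\psi$ is dominant) and to $U\subseteq B$ --- a two-line diagram chase in the commutative square of the four fundamental groups gives that $\psi_{*}$ is onto. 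Applied to $\varphi_{1}$ this makes $(\varphi_{1})_{*}$ surjective, and the first half of the ``Consequently'' clause drops out by applying the same statement to the open immersion $X\setminus Z\hookrightarrow X$, whose general fibre --- the fibre over the generic point of $X$ --- is a single reduced point.

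The second ingredient handles the finite surjective morphism $\varphi_{2}\colon Z\to Y$ of normal varieties. Let $V\subseteq Y$ be a dense Zariski-open over which $\varphi_{2}$ is finite étale of degree $d:=\deg\varphi_{2}$ (such $V$ exists since $\varphi_{2}$ is finite, generically étale in characteristic zero, and generically flat); note $\varphi_{2}^{-1}(V)$ is connected, being a dense open of the irreducible variety $Z$, so the image of $\pi_{1}(\varphi_{2}^{-1}(V))\to\pi_{1}(V)$ is a subgroup of index exactly $d$. Pushing forward along the $\pi_{1}$-surjections $\pi_{1}(\varphi_{2}^{-1}(V))\to\pi_{1}(Z)$ and $\pi_{1}(V)\to\pi_{1}(Y)$ induced by the two open immersions then shows that the image of $(\varphi_{2})_{*}\colon\pi_{1}(Z)\to\pi_{1}(Y)$ contains a subgroup of index dividing $d$, hence itself has index dividing $d$ in $\pi_{1}(Y)$. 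Composing with the surjection $(\varphi_{1})_{*}$ from the previous step establishes part (1): in general $\varphi_{*}(\pi_{1}(X))$ has finite index in $\pi_{1}(Y)$, and when $\varphi$ is quasi-finite the general fibre of $\varphi_{1}$ is finite and irreducible, hence a single point, so $\varphi_{1}$ is birational, $d=\deg\varphi_{2}=\deg\varphi$, and the index divides $\deg\varphi$.

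Finally, for the ``Moreover'' clause, with $X$ smooth and $\codim_{X}Z\ge 2$: surjectivity of $\pi_{1}(X\setminus Z)\to\pi_{1}(X)$ is already in hand, and injectivity is transversality --- $Z$ has real codimension at least $4$ in the complex manifold $X$, so any homotopy between loops in $X\setminus Z$, a two-real-dimensional family, can be perturbed rel boundary to miss $Z$ (one may instead cite Zariski--Nagata purity). The step I expect to be the genuine obstacle is the first ingredient: one must be confident that ``general fibre irreducible'' really suffices to produce a global topological fibre bundle over a dense Zariski-open --- so that the homotopy exact sequence is legitimately available --- and that the ``complement of a proper closed subvariety'' $\pi_{1}$-surjections survive across the singular loci of the normal varieties in play; this is precisely where normality is used.
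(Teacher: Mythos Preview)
The paper does not prove this lemma at all: it is stated as a well-known result and attributed to Serre via the citation \cite{Ser1959}, with no argument given. So there is no ``paper's own proof'' to compare against.

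Your sketch is the standard modern route to Serre's result and is essentially correct. The Stein-type factorization through the normalization of $Y$ in the relative algebraic closure, the use of generic topological local triviality to obtain a genuine fibre bundle over a dense open, and the covering-space count for the finite \'etale part are exactly the right ingredients. One point worth flagging explicitly: you invoke the $\pi_1$-surjection for complements of proper closed subvarieties \emph{inside} your proof of (2), while the lemma presents that very statement as a \emph{consequence} of (2). This is not a genuine circularity --- the open-immersion case can be established directly by a local argument using normality (local irreducibility plus a stratification or transversality argument), and you correctly identify this as the place where normality does real work --- but in a written-up version you should prove that special case first and independently, then use it to bootstrap (2), rather than deriving it afterwards. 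Your ``Moreover'' argument via real codimension $\ge 4$ transversality is fine for the smooth case as stated.
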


The following result is implicitly used in our latter proofs. The proof of this result follows from covering space theory.

\begin{lemma}\label{Lem: Pull-back of connected covering}
	Let $f: (Z,z_0) \to (W,w_0)$ be a continuous surjective map between connected topological manifolds such that the induced homomorphism $f_\ast:\pi_1(Z,z_0) \to \pi_1(W,w_0)$ is onto. Then for any connected topological covering $p:(\wt{W}, \wt{w}_0) \to (W,w_0)$, the induced covering $\wt{p}:(\wt{Z}, \wt{z}_0) \to (Z,z_0)$ is also connected, where $\wt{Z}:=Z\times_{W}\wt{W}$.
\end{lemma}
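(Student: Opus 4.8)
The plan is to reduce the assertion to a transitivity statement for the monodromy action of $\pi_1(Z,z_0)$ on the fibre of $\wt{p}$ over $z_0$, and then to obtain that transitivity from the surjectivity of $f_\ast$ together with the connectedness of $\wt{W}$.

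First I would record that a base change of a covering map is again a covering map, so that $\wt{p}\colon \wt{Z} = Z\times_W\wt{W}\to Z$ is a (possibly disconnected) covering of $Z$. Since $Z$ and $W$ are topological manifolds, they are locally path-connected (indeed locally contractible), hence so is $\wt{Z}$; consequently the connected components of $\wt{Z}$ are open, coincide with its path components, and each of them maps onto $Z$ as a covering. Standard covering space theory then yields: $\wt{Z}$ is connected if and only if the monodromy action of $\pi_1(Z,z_0)$ on the fibre $\wt{p}^{\,-1}(z_0)$ is transitive.

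Next I would make this monodromy action explicit. The second projection $\mathrm{pr}_{\wt{W}}\colon \wt{Z}\to\wt{W}$ restricts to a bijection $\wt{p}^{\,-1}(z_0)\xrightarrow{\ \sim\ }p^{-1}(w_0)$, $(z_0,\wt{w})\mapsto\wt{w}$. Given a loop $\gamma$ in $Z$ based at $z_0$, its lift in $\wt{Z}$ starting at $(z_0,\wt{w})$ is formed coordinatewise, namely $t\mapsto\bigl(\gamma(t),\,\widehat{f\circ\gamma}(t)\bigr)$, where $\widehat{f\circ\gamma}$ denotes the $p$-lift of the loop $f\circ\gamma$ in $W$ with initial point $\wt{w}$. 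Hence, under the above bijection, the monodromy of $[\gamma]\in\pi_1(Z,z_0)$ on $\wt{p}^{\,-1}(z_0)$ agrees with the monodromy of $f_\ast[\gamma]\in\pi_1(W,w_0)$ on $p^{-1}(w_0)$; in other words, the $\pi_1(Z,z_0)$-action on the fibre factors as $f_\ast$ followed by the usual monodromy action of $\pi_1(W,w_0)$ on $p^{-1}(w_0)$.

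Finally, since $\wt{W}$ is connected, this latter action of $\pi_1(W,w_0)$ on $p^{-1}(w_0)$ is transitive; and as $f_\ast$ is surjective its image is all of $\pi_1(W,w_0)$, so the composed action of $\pi_1(Z,z_0)$ on $\wt{p}^{\,-1}(z_0)$ is transitive as well, and $\wt{Z}$ is connected. Equivalently, one may phrase the conclusion as an index computation: the connected component of $\wt{Z}$ through $\wt{z}_0=(z_0,\wt{w}_0)$ is the covering of $Z$ classified by the subgroup $f_\ast^{-1}\bigl(p_\ast\pi_1(\wt{W},\wt{w}_0)\bigr)\le\pi_1(Z,z_0)$, whose index equals $[\pi_1(W,w_0):p_\ast\pi_1(\wt{W},\wt{w}_0)]=\deg p$ by surjectivity of $f_\ast$, and since $\deg\wt{p}=\deg p$ as well this component must be all of $\wt{Z}$. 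The only points requiring a little care are the appeal to local path-connectedness of the manifolds---needed to identify ``connected'' with ``transitive monodromy'' for the a priori disconnected covering $\wt{p}$---and the coordinatewise path-lifting identity that pins down the monodromy map; I do not expect either to present a serious obstacle.
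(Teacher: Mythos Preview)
Your argument is correct and is precisely the standard covering-space reasoning one would expect. The paper itself does not supply a proof of this lemma; it merely states that ``the proof of this result follows from covering space theory,'' so your monodromy/transitivity argument (or its equivalent index-computation reformulation) is exactly the kind of justification the paper leaves implicit.
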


\medskip

{\bf [C] --} We will now recall some well-known results which we will often use in our later proofs.
\subsection{Basics about Eilenberg-MacLane smooth complex algebraic varieties}

The following results about an Eilenberg-MacLane algebraic variety (or a finite CW complex) are well-known. We will implicitely use these results in our later proofs.
\begin{lemma}\label{Lem: Well-known about EM}
	The following assertions hold.
	
	\begin{enumerate}[\indent\rm(1)]
		\item Let $X$ be a complex algebraic variety which is an Eilenberg-MacLane space. Then $X$ is $K(G,1)$, for $G=\pi_1(X)$. Moreover, if $X$ is simply connected, then it is contractible.
		
		\item For any smooth $K(G,1)$ quasi-projective variety, the group $G$ is torsion-free.
		
		\item Let $X, Y$ be connected topological spaces and $f: X \to Y$ a fiber bundle with connected fiber $F$. If $Y$ is a $K(G,1)$ space and $F$ is a $K(G',1)$ space with $G=\pi_1(Y)$ and $G'=\pi_1(F)$ then $X$ is a $K(G'',1)$ space for $G''=\pi_1(X)$.
		
		\item No smooth complex algebraic $K(G,1)$ surface can admit a $\mathscr{C}^\infty$-smooth fiber bundle structure over $\PP^1$.	
	\end{enumerate}
\end{lemma}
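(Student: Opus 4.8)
The plan is to deduce all four assertions from two standard inputs: (i) a complex algebraic variety is homotopy equivalent to a finite CW complex, so its integral homology is finitely generated and vanishes in degrees above twice its complex dimension; and (ii) an Eilenberg--MacLane space $K(G,n)$ with $n\ge 2$ and $G\ne 1$, as well as $K(\Gamma,1)$ for any group $\Gamma$ with nontrivial torsion, has nonzero (co)homology in infinitely many degrees, hence admits no finite-dimensional CW model. For (ii) I would simply quote the classical computations of Serre and Cartan for Eilenberg--MacLane spaces together with the elementary fact that a group with torsion has infinite cohomological dimension.

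For part (1): write $X\simeq K(G,n)$ for some $n\ge 1$. If $n=1$ we are done, with $G=\pi_1(X)$. If $n\ge 2$, then $X$ is $(n-1)$-connected, so by the Hurewicz theorem $G=\pi_n(X)\cong H_n(X;\Z)$ is finitely generated; were $G$ nontrivial this would contradict (i)+(ii), so $G=1$, i.e.\ $X$ is weakly contractible, hence contractible as a CW complex. In every case $X$ is $K(\pi_1(X),1)$, and the ``moreover'' clause is exactly the subcase $\pi_1(X)=1$.

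For part (2): if $\pi_1(X)$ contained an element of prime order $p$, the corresponding subgroup $\Z/p\Z$ would yield a connected finite \'etale cover $X'\to X$; by the Riemann existence theorem $X'$ carries the structure of a smooth quasi-projective variety, and, being the covering of the $K(\pi_1(X),1)$ space $X$ associated to that subgroup, it is a $K(\Z/p\Z,1)$ — contradicting (i)+(ii). Part (3) is immediate from the long exact homotopy sequence of the fibration $F\to X\to Y$: for each $n\ge 2$ both $\pi_n(F)$ and $\pi_n(Y)$ vanish, so $\pi_n(X)=0$, and since $X$ is connected it is $K(\pi_1(X),1)$.

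For part (4) — the only place needing genuine care — suppose $p\colon X\to\PP^1$ is a $\mathscr{C}^\infty$ fiber bundle with $X$ a smooth algebraic surface, so $X$ is aspherical by (1). First I would observe that the fiber $F$ is a connected boundaryless real $2$-manifold: connectedness holds because $\pi_1(\PP^1)=1$ leaves no monodromy to permute the components of a fiber, so a disconnected fiber would disconnect $X$. From the homotopy sequence of $F\to X\to\PP^1$ and $\pi_2(X)=0$, the boundary map $\partial\colon\pi_2(\PP^1)\cong\Z\to\pi_1(F)$ is injective, so $\pi_1(F)$ is infinite; this rules out $F\cong S^2$ and $F\cong\mathbb{RP}^2$ (the only $2$-manifolds with $\pi_2\ne 0$) and $F\cong\R^2$ (otherwise $X\simeq\PP^1$ would not be aspherical). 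Hence $F$ is aspherical, so $\pi_2(F)=0$; substituting this and $\pi_3(X)=0$ into the segment $\pi_3(X)\to\pi_3(\PP^1)\xrightarrow{\ \partial\ }\pi_2(F)$ forces $\pi_3(\PP^1)=0$, contradicting $\pi_3(\PP^1)\cong\pi_3(S^2)\cong\Z$. The main obstacle, such as it is, lies entirely in this last step: isolating that $F$ is a genuine boundaryless surface and eliminating the exceptional fibers $S^2$, $\mathbb{RP}^2$, $\R^2$ before the $\pi_3$-argument can be run. Everything else is bookkeeping with the long exact sequence of a fibration and the quoted facts about the topology of algebraic varieties and Eilenberg--MacLane spaces.
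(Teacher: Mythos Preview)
The paper itself does not prove this lemma; it simply introduces it with the sentence ``the following results about an Eilenberg--MacLane algebraic variety (or a finite CW complex) are well-known'' and moves on. So there is no proof to compare against, and your write-up is a genuine addition rather than a reproduction.

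Your arguments for (1), (3) and (4) are correct. In (4) the reduction is clean: once $F$ is seen to be a connected boundaryless real $2$-manifold with infinite $\pi_1$, asphericity of $F$ is automatic (its universal cover is a simply connected noncompact surface, hence $\R^2$), and then the segment $\pi_3(X)\to\pi_3(\PP^1)\to\pi_2(F)$ does the job. The separate exclusion of $F\cong\R^2$ is redundant --- it already follows from $\pi_1(F)$ being infinite --- but harmless.

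There is, however, a genuine slip in (2). The covering space of $X$ corresponding to a subgroup $H\le\pi_1(X)$ has degree $[\pi_1(X):H]$, not $|H|$. Taking $H=\Z/p\Z$ inside an infinite $\pi_1(X)$ gives an \emph{infinite}-sheeted cover, so neither ``finite \'etale'' nor the Riemann existence theorem applies, and $X'$ need not be quasi-projective. The fix is immediate and in fact simpler than what you wrote: the cover $X'$ is a smooth manifold of the same real dimension as $X$ (or, at the CW level, a complex of the same dimension), hence $H^n(X';\Z)=0$ for $n>2\dim_\C X$; since $X'\simeq K(\Z/p\Z,1)$ has $H^n(\Z/p\Z;\Z)\ne 0$ for infinitely many $n$, this is already a contradiction. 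Equivalently, $H^*(X;\Z)\cong H^*(G;\Z)$ forces $G$ to have finite cohomological dimension, which is impossible for a group with torsion.
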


\subsection{A formula of M. Suzuki}
M. Suzuki proved an important formula for the Euler-Poincar{\'e} characteristic of a smooth affine surface fibered over a smooth algebraic curve (cf. \cite{Suz1977}).
\begin{theorem}[{{\bf Suzuki's Formula}; cf. \cite{Suz1977}}]\label{Suzuki's formula}
	Let $f:V\to C$ be a surjective morphism from a smooth affine surface $V$ onto a smooth algebraic curve $C$ such that a general fiber $F$ of $f$ is irreducible. Let $p_1,\ldots,p_n$ be all the points in $C$ such that $f$ is not $C^{\infty}$-locally trivial in a neighbourhood of $p_i\;(1\leq i \leq n)$. Then
	$$e(V)=e(C)\cdot e(F) + \sum\limits_{i=1}^{n}\left(e(F_i)-e(F)\right),$$ where $F_i:=f^{-1}(p_i)$ for $1\leq i \leq n$. 
	
	Further, every difference in any bracket on the right-hand side of this equality is non-negative. If $e(F_i)=e(F)$ for some $i$, then $F$ is isomorphic to either $\C$, or $\C^*$ and $(F_i)_{red}$ is isomorphic to $F$.
\end{theorem}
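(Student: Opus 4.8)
I would prove the three assertions of the statement in turn: the Euler–characteristic identity, the non-negativity of each bracket, and the equality case. For the identity, put $C^{\circ}:=C\setminus\{p_1,\dots,p_n\}$ and $V^{\circ}:=f^{-1}(C^{\circ})$. By the choice of the $p_i$, the restriction $f\colon V^{\circ}\to C^{\circ}$ is a locally trivial $C^{\infty}$-fibre bundle with fibre $F$, so multiplicativity of the compactly supported Euler characteristic in fibre bundles gives $e(V^{\circ})=e(C^{\circ})\,e(F)$ (for complex algebraic varieties the compactly supported and topological Euler characteristics agree, so I keep writing $e$). Writing $V=V^{\circ}\sqcup F_1\sqcup\cdots\sqcup F_n$ and using additivity of $e$ then yields $e(V)=e(V^{\circ})+\sum_{i=1}^n e(F_i)=(e(C)-n)e(F)+\sum_{i=1}^n e(F_i)$, which rearranges to the asserted formula. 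This part is routine.

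\textbf{Non-negativity.} The heart of the matter is $e(F_i)\ge e(F)$, and this is where affineness of $V$ must be used (without it one can make $e(F_i)<e(F)$ by deleting a negative component of a fibre). Since the assertion concerns only the one-parameter degeneration of the smooth irreducible affine curve $F$ into $F_i$, I would compactify: pick a smooth projective surface $\ol V\supseteq V$ and a smooth projective curve $\ol C\supseteq C$ so that $f$ extends to a surjective morphism $\ol f\colon\ol V\to\ol C$ with connected fibres (available because $F$ is irreducible), with $D:=\ol V\setminus V$ an SNC divisor. Being a morphism from a Cohen--Macaulay surface to a regular curve with purely one-dimensional fibres, $\ol f$ is flat, so $p_a(\ol F_q)=p_a(\ol F)=:g$ for every fibre. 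For any fibre $\ol F_q$, writing $Z:=(\ol F_q)_{\mathrm{red}}$, comparing $\chi(\OO_Z)$ with $\chi$ of its normalisation and invoking Milnor's identity $2\delta_x=\mu_x+r_x-1$ for plane-curve singularities yields
\[
e(\ol F_q)=e(Z)=2-2p_a(Z)+\textstyle\sum_x\mu_x\;\ge\;2-2p_a(Z)\;\ge\;2-2p_a(\ol F_q)=e(\ol F),
\]
using $p_a(Z)\le p_a(\ol F_q)$. It then remains to pass from $\ol f$ down to $f$: the general fibre $\ol F$ meets $D$ in exactly $s:=\ol F\cdot D$ distinct points, so $e(F)=e(\ol F)-s$, and I would combine the displayed inequality with an analysis of $D\cap\ol F_{p_i}$ — using crucially that, $V$ being affine, no component of $\ol F_{p_i}$ is complete and hence every component of it meets $D$ — to conclude $e(F_i)\ge e(F)$. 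I expect this boundary bookkeeping (keeping track of which components of $\ol F_{p_i}$ lie in $D$ and how the remaining ones meet $D$) to be the main technical obstacle.

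\textbf{Equality.} If $e(F_i)=e(F)$, tracing the inequalities forces the degeneration to be as mild as possible: $(F_i)_{\mathrm{red}}$ must be a smooth irreducible affine curve with the same genus and the same number of punctures as $F$, hence $(F_i)_{\mathrm{red}}\cong F$. Next, $F_i$ cannot be reduced, since otherwise $\ol f$ would be smooth along $\ol F_{p_i}$ with $D$ meeting it transversally, making $f$ a $C^{\infty}$-bundle near $p_i$, against the choice of $p_i$; hence $F_i=m\,(F_i)_{\mathrm{red}}$ with $m\ge 2$. Finally, the multiplicity-$m$ structure means that near a general point of $\Gamma:=(F_i)_{\mathrm{red}}$ the morphism $f$ looks like $(u,t)\mapsto t^{m}$, so the nearby general fibres are degree-$m$ branched coverings of $\Gamma$; since $\Gamma\cong F$, Riemann--Hurwitz gives $e(F)=m\,e(F)-R$ with $R\ge 0$ the ramification, so $e(F)=R/(m-1)\ge 0$, and a smooth affine curve with non-negative Euler characteristic is isomorphic to $\C$ or to $\C^{*}$. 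Together with $(F_i)_{\mathrm{red}}\cong F$, this is the claimed dichotomy.
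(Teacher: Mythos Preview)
The paper does not prove this theorem; it is quoted from the literature with the remark that Suzuki's original proof uses plurisubharmonic functions and that an algebro-geometric proof appears in \cite{Gur1997}. So there is no in-paper argument to compare your proposal against. Your derivation of the identity is correct and standard, and your outline for non-negativity is the opening of the algebro-geometric approach---you are right that the descent from $e(\ol F_q)\ge e(\ol F)$ on the compactification to $e(F_i)\ge e(F)$ via the boundary $D$ is where the substance lies, and as written that step is promised rather than carried out.

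Your equality argument, however, has genuine gaps. The inference ``same genus and same number of punctures, hence $(F_i)_{\mathrm{red}}\cong F$'' is false for affine curves in general; at that stage you only know $e((F_i)_{\mathrm{red}})=e(F)$, and the isomorphism can be concluded only \emph{after} showing $e(F)\in\{0,1\}$, since $\C$ and $\C^*$ are characterised among smooth affine curves by their Euler characteristic. More seriously, the claim that nearby general fibres are degree-$m$ branched covers of $\Gamma$ is unjustified and in fact fails as stated: the local model $(u,t)\mapsto t^m$ produces only an \emph{unramified} $m$-sheeted picture near each point of $\Gamma$, and globalising to a map $F_\epsilon\to\Gamma$ would require $F_\epsilon$ to sit inside a tubular neighbourhood of $\Gamma$, which need not hold for the non-proper map $f$. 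Concretely, when $F\cong\C$ and $F_i=m\C$ with $m\ge 2$, the connected curve $F_\epsilon\cong\C$ cannot be realised as an unramified $m$-fold cover of the simply connected curve $\C$ at all, so no such covering is available from your local description and your Riemann--Hurwitz step does not go through. The conclusion $e(F)\ge 0$ requires a different argument; see \cite{Gur1997}.
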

The proof of this uses the theory of plurisubharmonic functions. An algebro-geometric proof was given by the first author (cf. \cite{Gur1997}).

\subsection{Nori's lemma and Xiao Gang's generalization}

M. V. Nori proved, in \cite{Nor1983}, an important result on a short exact sequence involving fundamental groups of smooth algebraic varieties.

\begin{theorem}[{{\bf Nori's lemma}; \cite[Lemma 1.5]{Nor1983}}]\label{Nori's Lemma}
Let $X$ and $Y$ be smooth connected complex algebraic varieties and $f\,:\, X
\,\longrightarrow\, Y$ a dominant morphism with a connected general fiber $F$. Assume that there is a closed subset
$S\, \subset\, Y$, of codimension at least two, outside which all the fibers of $f$ have at least one smooth point (i.e., $f^{-1}(p)$ is generically reduced on at least
one irreducible component of $f^{-1}(p)$ for all $p \,\in\, Y\setminus S$). 
Then the natural exact sequence of fundamental groups $$\pi_1(F)\,\longrightarrow\, \pi_1(X) \,\longrightarrow\,\pi_1(Y)\,\longrightarrow\, (1)$$
is exact.
\end{theorem}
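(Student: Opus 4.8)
The plan is to run the classical three-step argument underlying Nori's lemma. \textbf{Step 1.} Since the general fibre $F$ of $f$ is irreducible, Lemma~\ref{Lem: surjective induced map at the level of pi_1}(2) already gives surjectivity of $f_\ast\colon\pi_1(X)\to\pi_1(Y)$, while the composite $\pi_1(F)\to\pi_1(X)\xrightarrow{f_\ast}\pi_1(Y)$ is trivial because $f$ collapses $F$ to a point. Writing $N\subseteq\pi_1(X)$ for the normal subgroup generated by the image of $\pi_1(F)$, the map $f_\ast$ therefore factors through a surjection $\pi_1(X)/N\to\pi_1(Y)$, and the whole assertion reduces to the reverse inclusion $\ker f_\ast\subseteq N$.

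\textbf{Step 2.} The decisive move is to base-change to the universal cover. Let $p\colon\widetilde{Y}\to Y$ be the universal covering, set $\widetilde{X}:=X\times_Y\widetilde{Y}$, and let $\widetilde{p}\colon\widetilde{X}\to X$, $\widetilde{f}\colon\widetilde{X}\to\widetilde{Y}$ be the two projections. Since $f_\ast$ is onto, Lemma~\ref{Lem: Pull-back of connected covering} shows $\widetilde{X}$ is connected, and $\widetilde{p}$ is exactly the connected covering of $X$ attached to the subgroup $\ker f_\ast$, so $\pi_1(\widetilde{X})\cong\ker f_\ast$. As $p$ is étale, $\widetilde{f}$ has locally the same shape as $f$: its general fibre is again the connected surface $F$, and the hypothesis that every fibre off a set of codimension $\ge 2$ carries a smooth point is inherited, with exceptional set $p^{-1}(S)$. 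Moreover $\widetilde{p}$ maps each smooth connected fibre of $\widetilde{f}$ isomorphically onto such a fibre of $f$, whose $\pi_1$-image in $\pi_1(X)$ is a conjugate of the image of $\pi_1(F)$, hence lies in $N$. Consequently it is enough to establish the following statement, applied to $g=\widetilde{f}$: \emph{if $g\colon Z\to B$ is a dominant morphism of connected complex manifolds, $B$ simply connected, with connected general fibre, and having a smooth point in every fibre outside a closed analytic subset of codimension $\ge 2$, then $\pi_1(Z)$ is generated by the images of $\pi_1$ of the fibres of $g$ lying over points where $g$ is smooth.} Indeed, this forces $\ker f_\ast=\pi_1(\widetilde{X})\subseteq N$.

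\textbf{Step 3 (the crux).} First I would delete the codimension $\ge2$ bad set $T$ from $B$: this leaves $\pi_1(B)$ trivial (real codimension $\ge4$), and by Lemma~\ref{Lem: surjective induced map at the level of pi_1} it does not shrink the relevant subgroup of $\pi_1(Z)$, so one may assume every fibre of $g$ carries a smooth point. The remaining claim is purely topological, and the plan is to cut $\dim B$ down to one by a Lefschetz/pencil argument: a sufficiently general one-dimensional linear section $C\subseteq B$ (taken in a partial compactification) gives, via the Zariski--Lefschetz and Goresky--MacPherson theorems, a surjection $\pi_1(g^{-1}(C))\to\pi_1(Z)$ with $C$ simply connected, and $g^{-1}(C)\to C$ still has a smooth point in every fibre and only finitely many singular fibres. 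Over the curve $C$ one then finishes by van Kampen: split $C$ along a spanning tree into discs, over each of which $g$ retracts onto a single fibre, and check that a singular fibre carrying a smooth point contributes no loops beyond those already present in a nearby general fibre together with the local monodromy; patching then writes every loop of $g^{-1}(C)$, hence of $Z$, as a product of loops lying in smooth fibres. The hard part will be exactly this Step 3: $g$ is not proper, so one cannot invoke Ehresmann's theorem, the singular fibres may be reducible and non-reduced, and it is precisely the ``smooth point in every fibre'' hypothesis that tames the local topology around them --- and indeed without this hypothesis both the claim and the lemma fail. Steps 1 and 2, by contrast, are formal once the covering-space input of Lemmas~\ref{Lem: surjective induced map at the level of pi_1} and~\ref{Lem: Pull-back of connected covering} is in hand.
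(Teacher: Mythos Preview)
The paper does not contain a proof of this statement: Theorem~\ref{Nori's Lemma} is stated in the preliminaries as a quoted result, with attribution to \cite[Lemma~1.5]{Nor1983}, and is used later without any argument supplied. There is therefore nothing in the paper to compare your proposal against.

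For what it is worth, your sketch does follow the standard architecture of Nori's original argument --- reduce to showing $\ker f_\ast$ is generated by fibre loops, pass to the universal cover of the base so that the target becomes simply connected, then cut down to a curve in the base and analyse the fibration over that curve --- and Steps~1 and~2 are correctly set up. Step~3 is, as you acknowledge, only a plan rather than a proof: the Lefschetz/pencil reduction and the van Kampen patching over a curve with possibly non-proper, non-reduced singular fibres require real work (this is precisely where Nori's paper does its heavy lifting), and your outline does not yet supply the local analysis showing that a singular fibre with a smooth point contributes no new generators beyond those of a nearby smooth fibre. But since the present paper treats the lemma as a black box, none of this is needed here.
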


See also \cite[Remark 3.13, 3.14]{GGH2023}, for the two important remarks related to the above lemma. These are implicitly used in the latter proofs.\\

The following result due to Xiao Gang is a useful generalization of Theorem \ref{Nori's Lemma}.

\begin{lemma}[{cf. \cite[Lemma 2]{Gan1991}}]\label{Lem: Gang's Generalization}
Let $f \,:\, X\, \longrightarrow\, C$ be a surjective morphism from a smooth algebraic surface to a smooth algebraic curve
such that a general fiber $F$ of $f$ is smooth and irreducible. Let $\{p_1,\,\cdots,\,p_r\}\, \subset\, C$ be the image of the
multiple fibers of $f$ with the multiplicity of the fiber over $p_i$ being $m_i\, >\, 1$, $1\, \leq\, i\, \leq\, r$.
Let $\wt{C}$ be the smooth completion of $C$. Let $\wt{C}\setminus C\,=\{p_{r+1},\,\cdots,\, p_{r+\ell}\}$ and
$g\, =\, {\rm genus}(\wt{C})$. Then there is an exact sequence
$$\pi_1(F)\, \longrightarrow\, \pi_1(X ) \, \longrightarrow\, \Gamma \, \longrightarrow\, (1),$$
where $\Gamma$ is the group with generators $$\alpha_1,\, \beta_1, \,\cdots, \,\alpha_g,\, \beta_g,\, \gamma_1,\,\cdots,
\, \gamma_r,\,\gamma_{r+1},\, \cdots,\, \gamma_{r+\ell}$$ and relations
$$[\alpha_1,\, \beta_1]\cdots [\alpha_g, \,\beta_g] \gamma_1\cdots \gamma_r \cdot \gamma_{r +1}\cdots \gamma_{r +\ell}
\,=\, 1 \,= \,\gamma_1^{m_1}\,=\, \cdots \,=\, \gamma_r^{m_r};$$
here $[a, \,b]\,=\, aba^{-1}b^{-1}$. If there are no multiple fibers in the $F$-fibration $f$, then
$\Gamma$ is the fundamental group of $C$.
\end{lemma}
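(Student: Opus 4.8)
The plan is to reduce the lemma to three inputs: \cref{Nori's Lemma} applied to the fibration after the multiple fibres have been deleted; a purely local computation describing how $\pi_{1}$ changes when a multiple fibre is put back; and van Kampen's theorem together with an elementary diagram chase, which also identifies $\Gamma$. Set $C^{o}:=C\setminus\{p_{1},\dots,p_{r}\}$ and $X^{o}:=f^{-1}(C^{o})$ (so $X^{o}=X$, $C^{o}=C$ when $r=0$). Over $C^{o}$ no fibre of $f$ is multiple, hence for every $p\in C^{o}$ the fibre $f^{-1}(p)$ has a component of multiplicity one, whose generic point is a smooth point of $f^{-1}(p)$ as $X$ is smooth. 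Thus $f|_{X^{o}}\colon X^{o}\to C^{o}$ satisfies the hypothesis of \cref{Nori's Lemma} with empty exceptional set, giving the exact sequence
\[
\pi_{1}(F)\,\longrightarrow\,\pi_{1}(X^{o})\,\longrightarrow\,\pi_{1}(C^{o})\,\longrightarrow\,(1).
\]
Since $C^{o}=\wt{C}\setminus\{p_{1},\dots,p_{r+\ell}\}$ is a curve of genus $g$ with $r+\ell$ punctures, $\pi_{1}(C^{o})$ has the presentation with generators $\alpha_{1},\beta_{1},\dots,\alpha_{g},\beta_{g},\gamma_{1},\dots,\gamma_{r+\ell}$ and the single relation $[\alpha_{1},\beta_{1}]\cdots[\alpha_{g},\beta_{g}]\,\gamma_{1}\cdots\gamma_{r+\ell}=1$, the $\gamma_{i}$ being small loops around the $p_{i}$.

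\emph{The local input.} Fix $1\le i\le r$ and take an analytic disc $\Delta_{i}\ni p_{i}$ in $C$ so small that $p_{i}$ is the only point over which $f$ fails to be $C^{\infty}$-locally trivial (there are only finitely many such points on $C$, as in the setting of \cref{Suzuki's formula}); put $\Delta_{i}^{\ast}:=\Delta_{i}\setminus\{p_{i}\}$, so $f^{-1}(\Delta_{i}^{\ast})\to\Delta_{i}^{\ast}$ is a $C^{\infty}$ fibre bundle with fibre $F$. Let $K_{i}$ be the kernel of the inclusion-induced map $\pi_{1}(f^{-1}(\Delta_{i}^{\ast}))\to\pi_{1}(f^{-1}(\Delta_{i}))$, which is surjective since $f^{-1}(p_{i})$ has real codimension two in $f^{-1}(\Delta_{i})$. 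The claim is that the image of $K_{i}$ under $f_{\ast}\colon\pi_{1}(f^{-1}(\Delta_{i}^{\ast}))\to\pi_{1}(\Delta_{i}^{\ast})\cong\Z$ equals $m_{i}\Z$. One inclusion is elementary: writing $f^{-1}(p_{i})=\sum_{j}a_{j}D_{j}$, a small loop $\mu_{j}$ encircling $D_{j}$ at a generic point of $D_{j}$ bounds a $2$-disc transverse to $D_{j}$ inside $f^{-1}(\Delta_{i})$, so $\mu_{j}\in K_{i}$, while $f_{\ast}(\mu_{j})=a_{j}$; hence $f_{\ast}(K_{i})\supseteq\langle a_{1},a_{2},\dots\rangle=\gcd_{j}(a_{j})\,\Z=m_{i}\Z$. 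The reverse inclusion is the substantive local fact, and I discuss it in the last paragraph: $f$ induces a surjection $\pi_{1}(f^{-1}(\Delta_{i}))\to\Z/m_{i}$ under which the meridian of the central fibre maps to a generator.

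\emph{Gluing and identification of $\Gamma$.} Cover $X$ by $X^{o}$ and the pairwise disjoint opens $f^{-1}(\Delta_{1}),\dots,f^{-1}(\Delta_{r})$, noting $X^{o}\cap f^{-1}(\Delta_{i})=f^{-1}(\Delta_{i}^{\ast})$. Since $\pi_{1}(f^{-1}(\Delta_{i}))=\pi_{1}(f^{-1}(\Delta_{i}^{\ast}))/K_{i}$, iterated van Kampen gives $\pi_{1}(X)\cong\pi_{1}(X^{o})/N$, where $N$ is the normal closure in $\pi_{1}(X^{o})$ of the images of $K_{1},\dots,K_{r}$. The composite $\pi_{1}(f^{-1}(\Delta_{i}^{\ast}))\to\pi_{1}(X^{o})\to\pi_{1}(C^{o})$ factors as $\pi_{1}(f^{-1}(\Delta_{i}^{\ast}))\xrightarrow{f_{\ast}}\pi_{1}(\Delta_{i}^{\ast})\cong\Z\xrightarrow{1\mapsto\gamma_{i}}\pi_{1}(C^{o})$, so the image of $K_{i}$ in $\pi_{1}(C^{o})$ is $\langle\gamma_{i}^{m_{i}}\rangle$ and therefore $f_{\ast}(N)=\langle\langle\gamma_{1}^{m_{1}},\dots,\gamma_{r}^{m_{r}}\rangle\rangle$. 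Dividing the exact sequence above by $N$ (the kernel of $\pi_{1}(X^{o})\to\pi_{1}(C^{o})$ being the image of $\pi_{1}(F)$) yields
\[
\pi_{1}(F)\,\longrightarrow\,\pi_{1}(X)\,\longrightarrow\,\pi_{1}(C^{o})\big/\langle\langle\gamma_{1}^{m_{1}},\dots,\gamma_{r}^{m_{r}}\rangle\rangle\,\longrightarrow\,(1).
\]
Imposing the relations $\gamma_{i}^{m_{i}}=1$ ($1\le i\le r$) on the presentation of $\pi_{1}(C^{o})$ recalled in the first paragraph produces exactly the group $\Gamma$ of the statement; when $r=0$ nothing is imposed, so $\Gamma=\pi_{1}(C)$.

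The main obstacle is the local input of the second paragraph, precisely the inclusion $f_{\ast}(K_{i})\subseteq m_{i}\Z$, i.e. the surjectivity of an induced map $\pi_{1}(f^{-1}(\Delta_{i}))\longrightarrow\Z/m_{i}$ sending the meridian to a generator. Because $f$ need not be proper and a multiple fibre may be reducible and nowhere reduced, one cannot simply retract $f^{-1}(\Delta_{i})$ onto its central fibre or invoke Ehresmann's theorem globally; the standard way around this is the base change $t=s^{m_{i}}$: after normalising $f^{-1}(\Delta_{i})\times_{\Delta_{i}}\Delta_{i}'$ and resolving singularities the central fibre becomes non-multiple, and one compares fundamental groups through the resulting degree-$m_{i}$ family, whose restriction to the punctured discs is, by \cref{Lem: Pull-back of connected covering}, the connected cover attached to $m_{i}\Z\subset\Z$ (in the elliptic case this is Kodaira's analysis of multiple fibres via logarithmic transformations). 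Once that local fact is granted, \cref{Nori's Lemma}, van Kampen, and the diagram chase make everything else routine.
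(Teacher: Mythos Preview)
The paper does not supply a proof of this lemma; it is quoted as a result of Xiao Gang \cite{Gan1991} and used as a black box. So there is no ``paper's own proof'' to compare against.

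Your argument is the standard one and is essentially correct. The overall architecture---delete the multiple fibres, apply \cref{Nori's Lemma} over $C^{o}$, then reinsert the fibres one at a time via van Kampen and identify the added relations as $\gamma_{i}^{m_{i}}=1$---is exactly how this result is proved. Your diagram chase at the end is clean: since $f_{\ast}\colon\pi_{1}(X^{o})\to\pi_{1}(C^{o})$ is surjective with kernel the image of $\pi_{1}(F)$, and $f_{\ast}^{-1}(f_{\ast}(N))=N\cdot\ker f_{\ast}$, quotienting by $N$ on the left and by $f_{\ast}(N)$ on the right preserves exactness, as you claim.

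You are also right that the only nontrivial ingredient is the local inclusion $f_{\ast}(K_{i})\subseteq m_{i}\Z$, and your sketch via the cyclic base change $t=s^{m_{i}}$ followed by normalisation is the correct mechanism. One small sharpening: you do not actually need to resolve singularities after normalising. Since $X$ is smooth, the normalisation of $f^{-1}(\Delta_{i})\times_{\Delta_{i}}\Delta_{i}'$ is already smooth and the projection to $f^{-1}(\Delta_{i})$ is \'etale of degree $m_{i}$ (this is exactly the local content of the Ramified Covering Trick, \cref{Ramified covering trick}); the new central fibre is non-multiple, so \cref{Nori's Lemma} applies upstairs over the full disc $\Delta_{i}'$ and gives $\pi_{1}(F)\to\pi_{1}(\,\cdot\,)\to\pi_{1}(\Delta_{i}')=(1)$, from which the surjection $\pi_{1}(f^{-1}(\Delta_{i}))\twoheadrightarrow\Z/m_{i}$ follows by comparing with the degree-$m_{i}$ cover. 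This avoids any appeal to Kodaira's analysis and keeps the argument uniform across fibre types.
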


\subsection{Ramified Covering Trick}
We first recall the definition of the total multiplicity of a fiber of a fibration.
\begin{definition}
	Let $\varphi: X \to C$ be a fibration on a smooth, connected surface $X$ to a smooth algebraic curve $C$. Let $F$ be a fiber of $\varphi$ having irreducible components $F_1, \ldots, F_r$ for $r\in \Z_{>0}$ with multiplicities $m_1,\ldots, m_r$ respectively. Thus scheme-theoretically $F$ can be written as, $$F=m_1F_1+m_2F_2+\cdots+m_rF_r.$$
	The \emph{total multiplicity} of $F$ is defined by $\mult(F):=\gcd(m_1,\ldots, m_r)$. The fiber $F$ is said to be a \emph{multiple fiber} if $\mult(F)>1$.
\end{definition}
The solution of Fenchel's Conjecture, called ``Fenchel's Lemma'' is useful for the theorem that follows about the ramified covering trick in elimination of multiple fibers of an algebraic fibration. We refer \cite{Cha1983} for the solution of this conjecture. 

\begin{theorem}[{{\bf Fenchel's Lemma}; cf. \cite{Cha1983}}]\label{Fenchel's Conjecture}
	Let $C$ be a smooth irreducible quasi-projective curve of genus $g$. Let $p_1, p_2, \ldots, p_r$ be distinct points in $C$ and $m_1, m_2, \ldots, m_r$ be arbitrary integers $>1$, where we assume that if $C \cong \PP^1$ then $r\geq 2$ and if further $r=2$ then $m_1=m_2$. Then there is a finite Galois covering $\pi : D \to C$ which is branched precisely over $p_1, p_2, \ldots, p_r$ and the ramification index over $p_i$ is $m_i$ for each $i$.
\end{theorem}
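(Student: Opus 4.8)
\medskip
\noindent\textbf{Proof strategy.} The plan is to reformulate the statement group-theoretically and then, in the single genuinely difficult configuration, to invoke the uniformisation of $2$-dimensional orbifolds together with Selberg's lemma. Let $\bar C$ be the smooth completion of $C$, put $g=\mathrm{genus}(\bar C)$ and $\bar C\setminus C=\{q_1,\dots,q_s\}$, and set $C^{\circ}:=C\setminus\{p_1,\dots,p_r\}$. By the standard presentation of the fundamental group of a punctured Riemann surface,
\[
\pi_1(C^{\circ})=\Big\langle\, \alpha_1,\beta_1,\dots,\alpha_g,\beta_g,\ \gamma_1,\dots,\gamma_r,\ \delta_1,\dots,\delta_s \;\Big|\; \prod_{i=1}^{g}[\alpha_i,\beta_i]\cdot\gamma_1\cdots\gamma_r\cdot\delta_1\cdots\delta_s=1 \,\Big\rangle,
\]
where $\gamma_i$ is represented by a small loop around $p_i$. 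I would first observe that a finite Galois covering $\pi\colon D\to C$ branched precisely over $p_1,\dots,p_r$, with ramification index $m_i$ over $p_i$, amounts to a surjective homomorphism $\phi\colon\pi_1(C^{\circ})\to G$ onto a finite group such that $\phi(\gamma_i)$ has order exactly $m_i$ for every $i$: given such a $\phi$, let $D$ be the normalisation of $C$ in the function field of the connected \'etale $G$-cover of $C^{\circ}$ corresponding to $\ker\phi$; then $D$ is quasi-projective (being finite over $C$), its branch locus is contained in $\{p_1,\dots,p_r\}$ (the cover is \'etale over $C^{\circ}$) and equals that set (each $m_i>1$), and the ramification index over $p_i$ is the order of the inertia group, namely $\mathrm{ord}\,\phi(\gamma_i)=m_i$; the converse is immediate. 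So it suffices to construct $\phi$.

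When $s\ge1$, that is, $C$ is not complete, the sole relation may be used to eliminate $\delta_s$, so $\pi_1(C^{\circ})$ is free with $\gamma_1,\dots,\gamma_r$ among a free basis; the homomorphism to $\prod_{i=1}^{r}\Z/m_i\Z$ sending $\gamma_i$ to a generator of the $i$-th factor and every other basis element to $0$ is then surjective and sends $\gamma_i$ to an element of order exactly $m_i$. (No condition on $(g,r)$ is used here; the exceptional configurations in the hypothesis are complete.) When $s=0$ but $g\ge1$, I would argue non-abelianly: for $n$ sufficiently large the alternating group $A_n$ contains elements $c_1,\dots,c_r$ of orders $m_1,\dots,m_r$, and by a classical theorem of Ore the element $(c_1\cdots c_r)^{-1}\in A_n$ is a single commutator $[a,b]$; then $\alpha_1\mapsto a$, $\beta_1\mapsto b$, $\gamma_i\mapsto c_i$, and all other generators $\mapsto1$ respects the relation and realises the required orders, and one takes $G$ to be the (finite) image.

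The remaining case is $C$ complete with $g=0$, i.e.\ $C\cong\PP^1$, where $\pi_1(C^{\circ})=\langle\gamma_1,\dots,\gamma_r\mid\gamma_1\cdots\gamma_r=1\rangle$ is no longer free, and this is precisely where the hypothesis ``$r\ge2$, and $m_1=m_2$ if $r=2$'' is needed. If $r=2$ and $m_1=m_2=m$, after an automorphism of $\PP^1$ carrying $\{p_1,p_2\}$ to $\{0,\infty\}$ one takes $\pi\colon\PP^1\to\PP^1$, $[x:y]\mapsto[x^m:y^m]$. If $r\ge3$, let $O$ be the orbifold with underlying space $\PP^1$ and a cone point of order $m_i$ at $p_i$, with orbifold fundamental group $\Gamma:=\pi_1(C^{\circ})/\langle\!\langle\gamma_1^{m_1},\dots,\gamma_r^{m_r}\rangle\!\rangle$. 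By Thurston's classification of closed $2$-orbifolds, the only \emph{bad} (non-developable) ones are the teardrop $S^2(m)$ and the spindle $S^2(m_1,m_2)$ with $m_1\ne m_2$, both ruled out by the hypothesis; hence $O$ is good, so $\Gamma$ acts properly discontinuously and cocompactly by isometries on one of $S^2$, $\mathbb{E}^2$, $\mathbb{H}^2$. Consequently $\Gamma$ is a finitely generated linear group over $\R$, the image of $\gamma_i$ in $\Gamma$ generates the cyclic stabiliser of a lift of $p_i$ and so has order exactly $m_i$, and by Selberg's lemma $\Gamma$ possesses a torsion-free normal subgroup $\Gamma'$ of finite index (take $\Gamma'=(1)$ if $\Gamma$ is finite). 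For $G:=\Gamma/\Gamma'$ and $\phi\colon\pi_1(C^{\circ})\to G$ the natural surjection, one has $\phi(\gamma_i^{m_i})=1$ but $\phi(\gamma_i^{k})\ne1$ for $0<k<m_i$, because $\gamma_i^{k}$ maps to a nontrivial torsion element of $\Gamma$ while $\Gamma'$ is torsion-free; thus $\mathrm{ord}\,\phi(\gamma_i)=m_i$, as required.

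I expect this last configuration---$\PP^1$ with few branch points---to be the only real obstacle: there the naive attempts to prescribe the orders of the $\phi(\gamma_i)$ obstruct one another, so one must use the hypothesis (which is sharp, the teardrop and spindle being genuine counterexamples) together with non-trivial input: the classification of bad $2$-orbifolds, equivalently the uniformisation of $O$, to see that $\Gamma$ is a discrete subgroup of a Lie group, and Selberg's lemma to pass to a torsion-free subgroup. A purely algebraic substitute would be to quote the residual finiteness of the planar discrete group $\langle\gamma_1,\dots,\gamma_{r-1}\mid\gamma_i^{m_i}\ (i<r),\ (\gamma_1\cdots\gamma_{r-1})^{m_r}\rangle$ and intersect the finitely many finite-index normal subgroups witnessing $\mathrm{ord}\,\gamma_i=m_i$, but this ultimately rests on the same uniformisation.
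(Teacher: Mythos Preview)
The paper does not give its own proof of this statement: Fenchel's Lemma is quoted as a known result with the citation ``cf.\ \cite{Cha1983}'' and is used only as input to the ramified covering trick, so there is nothing in the paper to compare your argument against line by line.

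That said, your proposal is a correct and essentially self-contained proof. The reduction to producing a surjection $\phi\colon\pi_1(C^{\circ})\to G$ with $\mathrm{ord}\,\phi(\gamma_i)=m_i$ is standard and correctly stated; the free case ($C$ non-complete) and the positive-genus complete case are handled cleanly (the use of Ore's theorem for $A_n$ is a nice shortcut, though one could also proceed via the orbifold argument uniformly); and the genus-zero case is exactly where the hypothesis bites, which you identify correctly. Your invocation of Thurston's classification of bad $2$-orbifolds to see that $\Gamma$ is a cocompact Fuchsian, Euclidean, or spherical group, followed by Selberg's lemma to extract a torsion-free normal subgroup of finite index, is the modern standard route and is precisely how the residual-finiteness statement underlying the cited reference is usually established today. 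The key verification---that the image of $\gamma_i$ in $\Gamma/\Gamma'$ retains order exactly $m_i$ because $\Gamma'$ is torsion-free while $\gamma_i^k$ is nontrivial torsion in $\Gamma$ for $0<k<m_i$---is correctly isolated.
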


\begin{theorem}[{{\bf Ramified Covering Trick}; cf. \cite[III, 3.2.1]{Miy2001}}]\label{Ramified covering trick}
	Let $f:X \to C$ be a surjective morphism with an irreducible general fiber $F$, where $X$ is a smooth irreducible algebraic surface and $C$ is a smooth irreducible quasi-projective curve. Let $p_1, p_2, \ldots, p_r$ be the points in $C$ such that $f^{\ast}(p_i)$ has multiplicity $m_i>1$ and for any $p\in C$ other than any $p_i$ the total multiplicity of the fiber $f^{\ast}(p)$ is $1$. We assume that if $C \cong \PP^1$ then $r\geq 2$ and in case $r=2$ we have $m_1=m_2$. Let $\pi: D \to C$ be as in Fenchel's Lemma \ref{Fenchel's Conjecture}. Then the following diagram commutes,
	$$
	\begin{tikzcd}[row sep=large, column sep=large]
	\overline{X \times_{C} D} \arrow[r, "p_1"] \arrow[d, "p_2"'] &X \arrow[d, "f"]\\
	D \arrow[r, "\pi"'] & C
	\end{tikzcd}
	$$ and the following assertions hold.
	\begin{enumerate}[\indent\rm(1)]
		\item The normalized fiber product  is $\overline{X \times_{C} D}$ is smooth.
		\item $p_1 : \overline{X \times_{C} D} \to X$ is a finite Galois {\'e}tale covering of $X$.
		\item $p_2 : \overline{X \times_{C} D} \to D$ has no multiple fibers.
	\end{enumerate}
\end{theorem}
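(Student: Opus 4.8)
The plan is to reduce the whole statement to a purely local analysis of the normalized fibre product over neighbourhoods of the multiple fibres $f^{-1}(p_i)$, since away from these points nothing happens. First observe that the square commutes tautologically: $p_1$ and $p_2$ are the composites of the normalization morphism $\ol{X\times_C D}\to X\times_C D$ with the two projections of the ordinary fibre product, and those already satisfy $f\circ\mathrm{pr}_1=\pi\circ\mathrm{pr}_2$. Writing $C^{\circ}=C\setminus\{p_1,\dots,p_r\}$, $X^{\circ}=f^{-1}(C^{\circ})$ and $D^{\circ}=\pi^{-1}(C^{\circ})$, Fenchel's Lemma~\ref{Fenchel's Conjecture} says $\pi\colon D^{\circ}\to C^{\circ}$ is finite {\'e}tale, hence $X^{\circ}\times_{C^{\circ}}D^{\circ}\to X^{\circ}$ is finite {\'e}tale; in particular $X^{\circ}\times_{C^{\circ}}D^{\circ}$ is already smooth, coincides with its own normalization, and is a dense open subset of $\ol{X\times_C D}$ over which $p_1$ is {\'e}tale. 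So all the work is concentrated over the finitely many curves $F_{ij}$ making up the multiple fibres.

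The decisive step is the local computation of the normalization over such a curve. Fix $i$ and a local coordinate $\tau$ on $C$ at $p_i$; write $f^{*}(p_i)=\sum_j m_{ij}F_{ij}$, so that by hypothesis $\gcd_j m_{ij}=m_i$, and in particular $m_i\mid m_{ij}$ for every $j$. For $x\in X$ over $p_i$, lying on the fibre components with local equations $v_1,\dots,v_c$, one has $f^{*}\tau=\bigl(\prod_{l}v_l^{m_{il}/m_i}\bigr)^{m_i}\cdot u$ with $u$ a unit; and since the ramification of $\pi$ over $p_i$ is tame of index $m_i$ (characteristic $0$), {\'e}tale-locally on $D$ at a point $q$ over $p_i$ we may write $\pi^{*}\tau=\sigma^{m_i}$, so that $X\times_C D$ near $(x,q)$ is $\Spec$ of $\OO_{X,x}[\sigma]/(\sigma^{m_i}-f^{*}\tau)$. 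I would then check that adjoining $\omega:=\sigma/\prod_{l}v_l^{m_{il}/m_i}$, which satisfies $\omega^{m_i}=u\in\OO_{X,x}^{\ast}$, produces the normalization: the ring $\OO_{X,x}[\omega]/(\omega^{m_i}-u)$ is finite over $\OO_{X,x}$, contains the previous ring via $\sigma\mapsto\omega\prod_{l}v_l^{m_{il}/m_i}$, has the same total ring of fractions, and is finite {\'e}tale over $\OO_{X,x}$ (since the derivative $m_i\omega^{m_i-1}$ is a unit), hence regular. This shows at one stroke that $p_1$ is {\'e}tale everywhere and that $\ol{X\times_C D}$ is smooth, being {\'e}tale over the smooth surface $X$; so assertion~(1) and the {\'e}taleness part of~(2) come out together. (Alternatively, granted the first paragraph, one may invoke purity of the branch locus and run this computation only at the generic point of each $F_{ij}$, i.e.\ the case $c=1$.)

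Next I would settle connectedness and the Galois property needed for~(2). Since $f\colon X^{\circ}\to C^{\circ}$ has irreducible general fibre, $f_{*}\colon\pi_1(X^{\circ})\to\pi_1(C^{\circ})$ is surjective by Lemma~\ref{Lem: surjective induced map at the level of pi_1}; and since $D^{\circ}\to C^{\circ}$ is a connected covering, Lemma~\ref{Lem: Pull-back of connected covering} shows $X^{\circ}\times_{C^{\circ}}D^{\circ}$ is connected, whence by density $\ol{X\times_C D}$ is connected. The $\operatorname{Gal}(D/C)$-action on the second factor induces an action on $X\times_C D$ over $X$ that lifts uniquely to the normalization; on a general fibre of $p_1$ it is identified with the $\operatorname{Gal}(D/C)$-action on a fibre of $\pi$, hence is simply transitive. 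As $|\operatorname{Gal}(D/C)|=\deg\pi=\deg p_1$ and $\ol{X\times_C D}$ is connected, $p_1$ is a connected Galois {\'e}tale covering with group $\operatorname{Gal}(D/C)$, which gives~(2).

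Finally, for~(3) I would compute the fibres of $p_2$. For $q\in D$ over a point $p\notin\{p_1,\dots,p_r\}$, near $q$ the morphism $p_2$ is the {\'e}tale base change of $f$ over $p$, so $p_2^{*}(q)$ has the same total multiplicity as $f^{*}(p)$, namely $1$. For $q$ over some $p_i$, in the notation above one has on $\ol{X\times_C D}$, near a point over $x$, that $p_2^{*}(q)=\operatorname{div}(\sigma)=\operatorname{div}\bigl(\omega\prod_{l}v_l^{m_{il}/m_i}\bigr)=\sum_{l}(m_{il}/m_i)\operatorname{div}(v_l)$; since $p_1$ is {\'e}tale, each $\operatorname{div}(v_l)$ is a reduced component of $p_1^{-1}(F_{il})$, so the total multiplicity of $p_2^{*}(q)$ equals $\gcd_{l}(m_{il}/m_i)=\bigl(\gcd_{l}m_{il}\bigr)/m_i=1$. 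Hence $p_2$ has no multiple fibre, which is~(3). I expect the second paragraph to be the genuine obstacle: the explicit local normalization, and in particular the bookkeeping that $m_i$ divides every $m_{il}$ (so the monomial $\prod_{l}v_l^{m_{il}/m_i}$ makes sense) and that adjoining a single $m_i$-th root of a unit suffices even at points lying on several fibre components. Once that is in place, everything else is formal.
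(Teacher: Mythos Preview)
The paper does not supply a proof of this theorem; it is quoted as a preliminary result from Miyanishi's monograph \cite[III, 3.2.1]{Miy2001}, so there is no in-paper argument to compare against.

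That said, your argument is correct and is essentially the standard one. The heart of the matter is exactly the local normalization you carry out: writing $f^{*}\tau=u\cdot\bigl(\prod_{l}v_l^{m_{il}/m_i}\bigr)^{m_i}$ (which is legitimate precisely because $m_i=\gcd_j m_{ij}$ divides every $m_{il}$), and then observing that adjoining $\omega$ with $\omega^{m_i}=u$ yields a ring finite \'etale over $\OO_{X,x}$ that dominates the fibre product and is integrally closed. This simultaneously gives smoothness and \'etaleness of $p_1$, and your divisor computation $p_2^{*}(q)=\sum_l (m_{il}/m_i)\,\mathrm{div}(v_l)$ then immediately yields total multiplicity $1$. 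The connectedness step via Lemmas~\ref{Lem: surjective induced map at the level of pi_1} and~\ref{Lem: Pull-back of connected covering}, together with the transport of the $\mathrm{Gal}(D/C)$-action through the normalization, is also the right way to get the Galois property. Your alternative remark invoking Zariski--Nagata purity (normal source, smooth target, hence the branch locus is pure of codimension one, so it suffices to check at the generic points of the $F_{ij}$) is a legitimate shortcut and is often how this is presented.
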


\medskip

{\bf [D] --} The following standard results will be used in subsequent arguments. 
For the sake of completeness, we include brief sketches of their proofs.

\begin{lemma}\label{Lem: Homotopy equivalence if the fiber of a bundle is contractible}
Let $X$ and $Y$ be connected CW complexes, and let $f \colon X \to Y$ be a fiber bundle with connected fiber $F$. If $F$ is contractible, then $f$ is a homotopy equivalence.
\end{lemma}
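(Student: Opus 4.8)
The plan is to prove that $f$ is a \emph{weak} homotopy equivalence and then upgrade this to a genuine homotopy equivalence via the Whitehead theorem, which applies because $X$ and $Y$ are CW complexes by hypothesis.

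First I would use that a fiber bundle over a CW complex (indeed over any paracompact Hausdorff base) is a Hurewicz fibration, and in any case a Serre fibration. Consequently $f$ sits in the long exact sequence of homotopy groups of a fibration: fixing a basepoint $x_0 \in F \subseteq X$ and setting $y_0 := f(x_0)$,
\[
\cdots \longrightarrow \pi_n(F,x_0) \longrightarrow \pi_n(X,x_0) \xrightarrow{\,f_*\,} \pi_n(Y,y_0) \longrightarrow \pi_{n-1}(F,x_0) \longrightarrow \cdots.
\]
Since $F$ is contractible, $\pi_n(F,x_0)=0$ for all $n\ge 1$, so exactness forces $f_*\colon \pi_n(X,x_0)\to\pi_n(Y,y_0)$ to be an isomorphism for every $n\ge 1$. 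As $X$ and $Y$ are both path-connected, $f$ also induces a bijection on $\pi_0$; and since $X$ is connected, checking these isomorphisms at the single basepoint $x_0$ already shows that $f$ is a weak homotopy equivalence.

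It then remains to invoke the Whitehead theorem: a weak homotopy equivalence between CW complexes is a homotopy equivalence. This produces a map $g\colon Y\to X$ with $g\circ f\simeq \mathrm{id}_X$ and $f\circ g\simeq \mathrm{id}_Y$, which is exactly the assertion.

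I do not expect any genuine obstacle here; the two points deserving a line of care are (i) recording that the bundle $f$ is a fibration, so that the homotopy long exact sequence is legitimately available, and (ii) the bookkeeping at the low-degree end of the sequence (the $\pi_0$ bijection and the exactness of pointed sets near $\pi_1$), both of which are routine given that $F$, $X$, and $Y$ are connected. As an alternative to the long-exact-sequence step, one could instead construct a section $s\colon Y\to X$ of $f$ by obstruction theory — the successive obstructions lying in cohomology groups of $Y$ with coefficients in the (possibly twisted) groups $\pi_n(F)=0$ — and then verify that $s$ is a homotopy inverse of $f$; but the route through the homotopy sequence and Whitehead's theorem is shorter and cleaner, so that is the one I would write up.
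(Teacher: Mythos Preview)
Your proposal is correct and follows essentially the same route as the paper: both argue via the long exact homotopy sequence of the fibration, use contractibility of $F$ to conclude that $f_*$ is an isomorphism on all homotopy groups, and then invoke Whitehead's theorem. Your write-up is in fact slightly more careful than the paper's, since you explicitly note why the long exact sequence is available and handle the $\pi_0$ bookkeeping.
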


\begin{proof}
The long exact sequence in homotopy associated with the fiber sequence 
\[
F \xhookrightarrow{i} X \xrightarrow{f} Y
\]
is given by
\[
\cdots \to \pi_k(F) \xrightarrow{i_\ast} \pi_k(X) \xrightarrow{f_\ast} \pi_k(Y) \xrightarrow{\delta_k} \pi_{k-1}(F) \to \cdots \qquad (k \ge 1).
\]
Since $F$ is connected and contractible, we have $\pi_i(F) = 0$ for all $i \ge 0$. Hence, exactness implies that 
\[
f_\ast \colon \pi_i(X) \xrightarrow{\ \cong\ } \pi_i(Y)
\]
is an isomorphism for every $i$. The result now follows from Whitehead’s theorem, which asserts that a map between connected CW complexes inducing isomorphisms on all homotopy groups is a homotopy equivalence (see, e.g., \cite[Theorem~4.5]{Whi1949b}).
\end{proof}

\begin{cor}\label{Cor: b_2 of an A^1-bundle}
For a smooth complex algebraic variety $X$, let $f \colon X \to B$ be an algebraic $\A^1$-bundle over a smooth complex algebraic curve $B$. Then $b_2(X) = 0$ if and only if $B$ is affine.
\end{cor}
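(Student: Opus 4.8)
The plan is to reduce the corollary to the analogous, elementary fact about the base curve. Since the fiber of $f$ is $\A^1 \cong \C$, which is connected and contractible, and an algebraic $\A^1$-bundle is in particular a topological fiber bundle, Lemma~\ref{Lem: Homotopy equivalence if the fiber of a bundle is contractible} applies and shows that $f\colon X \to B$ is a homotopy equivalence. In particular $f$ induces an isomorphism $f_\ast\colon H_2(X;\C) \to H_2(B;\C)$, so $b_2(X) = b_2(B)$, and the problem is reduced to proving that $b_2(B) = 0$ if and only if the smooth curve $B$ is affine.

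For this last point I would use the dichotomy that a connected smooth complex algebraic curve is either complete, hence projective, or else non-complete, hence affine (a smooth projective curve with a nonempty finite set of points removed is affine). If $B$ is projective, then its underlying space is a closed orientable surface, so $H_2(B;\C) \cong \C$ and $b_2(B) = 1 \neq 0$. If $B$ is affine, then $B$ is a Stein manifold of complex dimension $1$, so by the Andreotti--Frankel theorem (cf.~\cite{AF1959} and the discussion in the introduction) one has $H_k(B;\Z) = 0$ for all $k > 1$; in particular $b_2(B) = 0$. Equivalently, an open Riemann surface is homotopy equivalent to a bouquet of circles, whence $b_2 = 0$. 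Combining the two steps gives $b_2(X) = b_2(B) = 0$ precisely when $B$ is affine, as claimed.

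There is no serious obstacle here; the only point deserving a word of care is the very first one, namely that the bundle projection $f$ really is a fiber bundle in the topological category, so that Lemma~\ref{Lem: Homotopy equivalence if the fiber of a bundle is contractible} is applicable. This is immediate when the $\A^1$-bundle is Zariski-locally trivial, and holds more generally for any $\A^1$-bundle that is locally trivial in a topology no finer than the analytic one; since the $\A^1$-bundles occurring in this paper are of this kind, this causes no difficulty.
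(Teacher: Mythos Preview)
Your proof is correct and follows essentially the same approach as the paper: apply Lemma~\ref{Lem: Homotopy equivalence if the fiber of a bundle is contractible} to get $b_2(X)=b_2(B)$, then use the dichotomy that a smooth curve is affine iff it is non-complete iff $b_2(B)=0$. The paper's version is terser on the last step (it simply asserts the equivalence), whereas you spell out the Andreotti--Frankel/orientable-surface justification and add a cautionary remark about topological local triviality of the bundle.
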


\begin{proof}
Since $\A^1$ is contractible, Lemma~\ref{Lem: Homotopy equivalence if the fiber of a bundle is contractible} implies that 
\[
f \colon X \to B
\]
is a homotopy equivalence. In particular, the induced map
\[
f_\ast \colon H_2(X; \Z) \xrightarrow{\ \cong\ } H_2(B; \Z)
\]
is an isomorphism, so $b_2(X) = b_2(B)$. Therefore, $b_2(X) = 0$ if and only if $b_2(B) = 0$, which happens precisely when $B$ is non-complete; equivalently, $B$ is affine.
\end{proof}

\begin{lemma}\label{Lem: Q-factorial fibration}
Let $f: X \to B$ be an $F$-fibration from a smooth affine surface $X$ onto a smooth algebraic curve $B$, where $F$ is a smooth affine rational curve. If $X$ is $\Q$-factorial, then the following hold:
\begin{enumerate}[\indent\rm(1)]
    \item $B$ is affine.
    \item All fibers of $f$ are irreducible. 
\end{enumerate}
\end{lemma}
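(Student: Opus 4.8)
The plan is to pass to a relative smooth completion of $f$ and deduce both statements from the combinatorics of the boundary divisor, the two key inputs being Suzuki's formula (Theorem~\ref{Suzuki's formula}), which bounds how far a fibre of $f$ can degenerate, and the $\Q$-factoriality of $X$, which prevents a vertical component from splitting off inside a fibre. Since a smooth affine rational curve is $\PP^1$ with $d\ge 1$ punctures, I would run everything in the model cases $F\cong\A^1$ ($d=1$) and $F\cong\C^*$ ($d=2$), the remaining cases being entirely parallel. Concretely, I would first fix a smooth projective surface $\ol{X}\supseteq X$ and an extension $\ol{f}\colon\ol{X}\to\ol{B}$ of $f$ over the smooth completion $\ol{B}$ of $B$, chosen so that $D:=\ol{X}\setminus X$ is a simple normal crossing divisor, $\ol{f}$ has connected fibres, and a general fibre of $\ol{f}$ is $\PP^1$; write $D=D_h+D_v$ for its horizontal and vertical parts. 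As $F$ is affine, $D_h\neq 0$ and $D_h\to\ol{B}$ is finite surjective. I would prove (2) first, since it makes $f$ essentially a bundle, and then deduce (1).

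For (2): if some fibre $G=f^{-1}(b)$ had $\ge 2$ reduced components $C_1,\dots,C_r$, then, $G$ being connected ($f$ has connected fibres), two of them meet; Suzuki's formula gives $e(G)\ge e(F)$, and its equality clause would force $G_{\mathrm{red}}\cong F$, impossible for a reducible curve, so $G$ is a genuinely degenerate fibre. On $\ol{X}$ the closures $\ol{C}_i$ occur among the components of $\ol{f}^{-1}(b)$, a connected tree of smooth rational curves obtained from a relatively minimal $\PP^1$-fibred model by blow-ups. Here the $\Q$-factoriality enters: each $C_i$ is a $\Q$-Cartier prime divisor, all scheme-theoretic fibres of $f$ are numerically equivalent on $X$, and $X=\ol{X}\setminus D$ is affine, so no $C_i$ is complete and the dual graph of $D$ carries an ample divisor; tracing these constraints through the standard degenerate-fibre analysis of $\A^1$- and $\C^*$-fibred $\Q$-factorial affine surfaces should force every component of $\ol{f}^{-1}(b)$ except one to lie in $D$, i.e.\ $G$ is irreducible.

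For (1): by (2) the only degenerations of $f$ are multiple fibres over finitely many points (together with, when $F\cong\C^*$, finitely many reduced $\A^1$-fibres, whose contribution is recorded by Suzuki's formula). Killing the multiple fibres by Fenchel's Lemma~\ref{Fenchel's Conjecture} and the Ramified Covering Trick (Theorem~\ref{Ramified covering trick}) produces a finite {\'e}tale cover $X'\to X$ and a fibration $f'\colon X'\to B'$ with no multiple fibres, where $B'\to B$ is finite, $X'$ is again smooth affine and $\Q$-factorial, and $B'$ is affine iff $B$ is. When $F\cong\A^1$, $f'$ is then a Zariski-locally trivial $\A^1$-bundle over $B'$, so Corollary~\ref{Cor: b_2 of an A^1-bundle} gives $B'$ affine $\iff b_2(X')=0$; since $X'$ is smooth affine, $b_2(X')=\rank\Pic(X')$, and the remaining task — that $\rank\Pic(X')=0$ — is exactly where the $\Q$-factoriality (with the control on vertical components from the previous step) is essential. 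The case $F\cong\C^*$ is the same after first discarding the reduced $\A^1$-fibres, replacing $\A^1$-bundles by $\C^*$-bundles, and bookkeeping $e(X')$ via Suzuki's formula. I expect this conversion of ``$X$ is $\Q$-factorial'' into the intersection-theoretic facts on $\ol{X}$ — no reducible fibres, and $b_2=0$ after trivialising the multiple fibres — to be the main obstacle, so most of the write-up would be the boundary/dual-graph bookkeeping on $\ol{X}$.
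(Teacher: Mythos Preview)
Your plan rests on a misreading of the hypothesis. In this paper ``$\Q$-factorial'' does \emph{not} mean the standard ``every Weil divisor is $\Q$-Cartier'' (which is automatic for smooth $X$ and hence vacuous); it means $\rho(X)=\dim_{\Q}\bigl(\Pic(X)\otimes\Q\bigr)=0$, i.e.\ $\Pic(X)$ is torsion --- see the Notations paragraph and the way the term is introduced in the proof of Proposition~\ref{Prop: Affine surface properly dominated by 2-torus and with finite pi_1}. Your argument for (2) (``each $C_i$ is a $\Q$-Cartier prime divisor, \dots'') therefore uses nothing, and the vague appeal to ``the standard degenerate-fibre analysis'' is exactly the computation you have to carry out. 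Even with the correct hypothesis, your route to (1) through the Ramified Covering Trick is problematic: Fenchel's Lemma has genus/multiplicity restrictions when $B\cong\PP^1$ that you have not verified, and you still need $\rho(X')=0$ for the \'etale cover $X'$, which does not follow formally from $\rho(X)=0$.

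The paper's proof is much more direct and avoids all of this machinery. One compactifies to a $\PP^1$-fibration $\varphi\colon V\to\ol{B}$; since $\Pic(X)$ is torsion, $\Pic(V)\otimes\Q$ is spanned by the finitely many boundary components, which forces $q(V)=0$, hence $b_1(V)=0$ and $\ol{B}\cong\PP^1$. For a $\PP^1$-fibration over $\PP^1$, $\Pic(V)$ is freely generated by a fibre, a section, and all but one component of each singular fibre; restricting to $X$ one reads off $\rho(X)$ explicitly. If $B$ were projective, the general fibre class would survive and give $\rho(X)\ge 1$; if some fibre of $f$ were reducible, its extra components would survive and again give $\rho(X)\ge 1$. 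Both contradict $\rho(X)=0$. No Suzuki formula, no covering trick, no passage to $X'$ is needed --- just bookkeeping in $\Pic(V)$.
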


\begin{proof}
Let $f : X \to B$ be an $F$-fibration as in the statement, with $X$ a smooth affine $\Q$-factorial surface and $B$ a smooth algebraic curve. Embed $X$ as an open subset of a smooth projective surface $V$ such that $f$ extends to a $\PP^1$-fibration 
\[
\varphi : V \to \bar{B}
\]
onto a smooth projective curve $\bar{B}$, where $B \subset \bar{B}$ is the smooth completion.

Since $X$ is $\Q$-factorial, the Picard group $\Pic(V)$ is finitely generated, hence $b_1(V)=0$. The general fiber of $\varphi$ is isomorphic to $\PP^1$, which is connected, and therefore the induced map $\varphi_\ast : \pi_1(V) \to \pi_1(\bar{B})$ and hence
\[
\varphi_\ast : H_1(V; \Z) \to H_1(\bar{B}; \Z)
\]
is surjective. Consequently, $b_1(\bar{B}) \le b_1(V) = 0$, implying $\bar{B} \cong \PP^1$. Thus $B$ is a smooth rational curve.

Since $\bar{B} \cong \PP^1$, one general fiber $F$ of $\varphi$, together with a cross-section, say $S$ of $\varphi$ and all but one irreducible component of every singular fiber, freely generate $\Pic(V)$.  

If $B$ is projective, then $\bar{B} \cong \PP^1$. Then a similar argument shows that a general fiber of $f$, together with all but one irreducible component of each singular fiber of $f$, freely generates $\Pic(X)$. Hence $\rho(X) \ge 1$, and equality holds if and only if $f$ has no reducible fibers. Therefore, $\Pic(X)$ cannot be torsion, contradicting the $\Q$-factoriality of $X$. This contradiction shows that $B$ must be affine, proving (1).

Now assume that $B$ is a smooth affine rational curve. By the same reasoning as above, all but one irreducible component of each singular fiber of $f$ freely generate $\Pic(X)$. Hence $\rho(X) \ge 1$ whenever $f$ has a reducible fiber, again contradicting the $\Q$-factoriality of $X$. Therefore, all fibers of $f$ are irreducible, establishing (2).
\end{proof}

Now we will quote the following two useful results from \cite{JSXZ2024}. 

\begin{lemma}[{cf. \cite[Lemma 2.6, Corollary 2.7]{JSXZ2024}}]
Let $X$ be a smooth quasi-projective surface and $\pi : X \to B$ a $\G_m$-bundle over a smooth algebraic curve $B$. Then we have:
\begin{enumerate}[\indent\rm(1)]
    \item There is a finite {\'e}tale cover $B'\to B$ of degree $\leq 2$ inducing a finite {\'e}tale cover $X' := X \times_{B} B' \to X$ and a $\G_m$-bundle $X' \to B'$ which is untwisted.
    \item The log Kodaira dimensions satisfy $$\Kbar(X') = \Kbar(X) = \Kbar(B) = \Kbar(B').$$
    \item Assume that $\pi$ is untwisted and $B$ is a smooth rational affine curve. Then $\pi$ is a trivial $\G_m$-bundle.
    \item If $X$ be a smooth affine surface and $\pi$ is a $\G_m$-bundle, then $B$ is affine.
\end{enumerate}
\end{lemma}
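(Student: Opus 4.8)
The plan is to treat the four assertions separately, as only (1) feeds into the others. For (1), the point is that a $\G_m$-bundle, as opposed to a $\G_m$-torsor, has structure group the full automorphism group $\operatorname{Aut}(\G_m)\cong\G_m\rtimes\Z/2\Z$ of the variety $\G_m$, the factor $\Z/2\Z$ being generated by the inversion $t\mapsto t^{-1}$ (acting on $\G_m$ by inversion). Such bundles over $B$ are thus classified by the non-abelian (\'etale) \v{C}ech cohomology set $H^1(B,\G_m\rtimes\Z/2\Z)$, and the exact sequence $1\to\G_m\to\G_m\rtimes\Z/2\Z\to\Z/2\Z\to1$ of structure sheaves of groups induces an exact sequence of pointed sets
\[
H^1(B,\G_m)\;\longrightarrow\;H^1(B,\G_m\rtimes\Z/2\Z)\;\longrightarrow\;H^1(B,\Z/2\Z).
\]
I would let $B'\to B$ be the \'etale cover classified by the image of $[\pi]$ in $H^1(B,\Z/2\Z)$: it is $B$ itself (degree $1$) exactly when $[\pi]$ lies in the image of $H^1(B,\G_m)$, i.e.\ when $\pi$ is untwisted, and otherwise the connected double cover attached to the ensuing surjection $\pi_1(B)\to\Z/2\Z$; in all cases $\deg(B'/B)\le2$. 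Pulling $\pi$ back along $B'\to B$ trivialises the image in $H^1(B',\Z/2\Z)$, so $X':=X\times_B B'\to B'$ comes from $H^1(B',\G_m)=\Pic(B')$, i.e.\ is a $\G_m$-torsor --- this is what ``untwisted'' means --- and $X'\to X$, being a base change of the finite \'etale $B'\to B$, is finite \'etale of degree $\le2$.

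Assertions (2) and (3) are then essentially formal. For (2), $\Kbar$ is invariant under finite \'etale morphisms of smooth varieties (standard; cf.\ \cite{Iit1982}), so $\Kbar(X')=\Kbar(X)$ and $\Kbar(B')=\Kbar(B)$ by (1), and it remains to see $\Kbar(X')=\Kbar(B')$ for the untwisted bundle $\pi':X'\to B'$. Realising $X'$ as the complement of two disjoint sections $S_0,S_\infty$ in the $\PP^1$-bundle $\PP(\OO_{B'}\oplus L)$, $L\in\Pic(B')$, and completing $B'$ to $\ol{B'}$, one obtains a smooth completion $\ol{X'}=\PP(\OO_{\ol{B'}}\oplus\ol L)$ with SNC boundary $D=S_0+S_\infty+\sum_jF_j$, the $F_j$ being the fibres over $\ol{B'}\setminus B'$; a short adjunction computation on this ruled surface (the class $K_{\ol{X'}}+S_0+S_\infty$ has fibre-degree $0$ and restricts to $K_{\ol{B'}}$ on $S_0\cong\ol{B'}$) gives $K_{\ol{X'}}+S_0+S_\infty\sim(\pi')^*K_{\ol{B'}}$, hence $K_{\ol{X'}}+D\sim(\pi')^*\big(K_{\ol{B'}}+(\ol{B'}\setminus B')\big)$ and $\Kbar(X')=\kappa(\ol{X'},K_{\ol{X'}}+D)=\kappa(\ol{B'},K_{\ol{B'}}+(\ol{B'}\setminus B'))=\Kbar(B')$. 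For (3), an untwisted $\G_m$-bundle is the $\G_m$-torsor attached to some $L\in\Pic(B)$ and is trivial as a bundle exactly when $L\cong\OO_B$; a smooth rational affine curve is $\PP^1$ with at least one point removed, hence an open subvariety of $\A^1=\Spec\C[t]$, so $\Pic(B)=0$ and $\pi$ is trivial.

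For (4), suppose $B$ were complete. Using (1) we pull back along $B'\to B$; finite morphisms are affine, so $X$ affine forces $X'$ affine, while $B'$ stays complete, and we may thus assume $\pi$ untwisted, i.e.\ $X=X_L$ is the complement of the zero section of a line bundle $L$ over the complete curve $B$. Then $\Gamma(X_L,\OO_{X_L})=\bigoplus_{n\in\Z}H^0(B,L^{\otimes n})$. If $\deg L=0$ this ring is $\C$, or $\cong\C[u,u^{-1}]$ when $L$ is torsion, so of Krull dimension $\le1<2=\dim X_L$ and $X_L$ is not affine. If $\deg L\ne0$, after replacing $L$ by $L^\vee$ if necessary (note $X_L\cong X_{L^\vee}$, via fibrewise inversion) we may take $\deg L>0$, whence $\Gamma(X_L,\OO_{X_L})=\bigoplus_{n\ge0}H^0(B,L^{\otimes n})=:R$; since $L^{\otimes m}$ is base-point free for $m\gg0$, the canonical morphism $X_L\to\Spec R$ omits the irrelevant maximal ideal $R^+$ and so is not surjective, so again $X_L$ is not affine. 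In either case the affineness of $X$ is contradicted, so $B$ is affine.

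The step I expect to require the most care is (1): making the non-abelian $H^1$ bookkeeping precise --- in particular, why the twisting class is represented by an honest (or trivial) \'etale double cover of $B$ and why ``degree $\le2$'' is exactly the outcome --- because once this reduction to the untwisted case is in hand, (2), (3), and (4) reduce respectively to routine adjunction on a ruled surface, a Picard-group triviality, and a ring-of-functions computation.
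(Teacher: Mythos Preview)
The paper does not supply its own proof of this lemma: it is quoted verbatim as a preliminary result from \cite[Lemma~2.6, Corollary~2.7]{JSXZ2024}, with no argument given. There is therefore nothing in the present paper to compare your proposal against.

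That said, your proof is correct and self-contained. The cohomological bookkeeping in (1) is the standard way to handle the twisted/untwisted dichotomy for $\G_m$-bundles, and your concern about it is unwarranted: the map $H^1(B,\G_m\rtimes\Z/2\Z)\to H^1(B,\Z/2\Z)$ is just ``take the associated $\Z/2\Z$-torsor'', and \'etale $\Z/2\Z$-torsors over a smooth curve are exactly the \'etale covers of degree dividing $2$. The adjunction in (2) is clean; your identification $K_{\ol{X'}}+S_0+S_\infty\sim(\pi')^*K_{\ol{B'}}$ follows immediately from the fibre-degree-zero observation plus restriction to $S_0$ via adjunction, as you sketch. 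Part (3) is immediate. For (4), your ring-of-functions argument works as written --- note that you do not need $R$ to be finitely generated, since the vertex $R^+$ is a maximal ideal of $R$ regardless (the quotient is $H^0(B',\OO_{B'})=\C$), so the canonical morphism $X_L\to\Spec R$ missing it already contradicts affineness. An alternative for (4), perhaps closer in spirit to how such things are often argued in the surface literature, is to observe that in the compactification $\PP(\OO_{B'}\oplus L)$ the boundary $S_0+S_\infty$ has self-intersection $0$ and hence cannot support an ample divisor, so its complement cannot be affine; but your approach is equally valid.
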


\begin{lemma}[cf. {\cite[Lemma 2.14]{JSXZ2024}}]\label{Lem: A^1-bundle over affine rational curve is trivial}
	Any $\A^1$ bundle over a smooth affine rational curve $B$ is trivial.
\end{lemma}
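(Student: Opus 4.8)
The plan is to deduce the triviality of the bundle from two vanishing facts about $B$. Since a smooth affine rational curve has smooth completion $\PP^1$, we have $B\cong\PP^1\setminus\{q_1,\dots,q_n\}$ for some $n\ge 1$, so $\Gamma(B,\OO_B)$ is a localization of the polynomial ring $\C[t]$ at finitely many primes and is therefore a principal ideal domain. In particular $\Pic(B)=0$, so every line bundle on $B$ is trivial. Moreover $B$ is affine, hence $H^1(B,\OO_B)=0$.

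Now let $f\colon X\to B$ be a Zariski-locally trivial $\A^1$-bundle and fix an affine open cover $\{U_i\}$ of $B$ over which it is trivial. Over $U_{ij}:=U_i\cap U_j$ the gluing isomorphism is of the form $(u,x)\mapsto\bigl(u,\,a_{ij}(u)\,x+b_{ij}(u)\bigr)$ with $a_{ij}\in\Gamma(U_{ij},\OO^\ast)$ and $b_{ij}\in\Gamma(U_{ij},\OO)$, since a $U_{ij}$-automorphism of $\A^1_{U_{ij}}$ is necessarily an affine-linear substitution in the fibre coordinate. The multiplicative cocycle $\{a_{ij}\}$ defines a line bundle $L$ on $B$ (the ``linear part'' of the bundle); as $L$ is trivial, after rescaling the trivializations we may assume $a_{ij}\equiv 1$, and then the translation parts $\{b_{ij}\}$ form an additive $1$-cocycle for $\OO_B$ on the cover $\{U_i\}$. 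Its class in $H^1(B,\OO_B)$ is precisely the obstruction to a global trivialization of this $\OO_B$-torsor; since $H^1(B,\OO_B)=0$, the cocycle $\{b_{ij}\}$ is a coboundary, and a final change of trivializations yields an isomorphism $X\cong B\times\A^1$ over $B$.

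I do not anticipate a genuine obstacle here; the only point deserving care is the reduction of structure group in the second step — that an $\A^1$-bundle really glues by affine-linear substitutions, and that killing the line bundle $L$ leaves an honest $\G_a$-torsor, so that the residual obstruction lives in $H^1(B,\OO_B)$. Equivalently, one can package the argument through the nonabelian cohomology exact sequence associated with $1\to\G_a\to\operatorname{Aut}(\A^1)\to\G_m\to 1$, using $H^1(B,\G_m)=\Pic(B)=0$ and $H^1(B,\G_a)=H^1(B,\OO_B)=0$.
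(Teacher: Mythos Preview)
Your argument is correct. The key ingredients are exactly right: a smooth affine rational curve $B$ has $\Pic(B)=0$ (its coordinate ring is a localization of $\C[t]$, hence a PID) and $H^1(B,\OO_B)=0$ (affineness), and the structure group of an $\A^1$-bundle is the affine group $\G_a\rtimes\G_m$, so the short exact sequence $1\to\G_a\to\operatorname{Aut}(\A^1)\to\G_m\to 1$ reduces the problem to the vanishing of $H^1(B,\G_m)$ and $H^1(B,\G_a)$.

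As for comparison: the paper does not supply its own proof of this lemma --- it merely quotes the result from \cite[Lemma~2.14]{JSXZ2024}. Your write-up therefore adds a self-contained justification where the paper has none. The only minor stylistic remark is that the last paragraph (``I do not anticipate a genuine obstacle\ldots'') is commentary rather than proof and should be dropped in a final version; the cohomological packaging you sketch there is in fact cleaner than the explicit cocycle manipulation and could simply replace it.
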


The following lemma is very useful in the context of proper (finite) descent of smooth complex algebraic varieties. 
\begin{lemma}\label{Lem: Finite morphism basic properties}
Let $f \colon V \to W$ be a finite surjective morphism between smooth irreducible complex algebraic varieties. Then the following hold:
\begin{enumerate}[\indent \rm(1)]
    \item $\Kbar(V) \ge \Kbar(W)$. Moreover, if $f$ is {\'e}tale, then $\Kbar(V) = \Kbar(W)$ \textup{(cf.~\cite{Iit1982})}.
    
    \item The induced homomorphism 
    \[
    f_{*} \colon H_{i}(V; \Q) \longrightarrow H_{i}(W; \Q)
    \]
    is surjective for all $i$. In particular, $b_i(V) \ge b_i(W)$ for all $i$ \textup{(cf.~\cite{Gie1964})}.
    
    \item There is an induced injective homomorphism
    \[
    f^{*}\otimes Id_{\Q} \colon \Pic(W) \otimes_{\Z} \Q \longrightarrow \Pic(V) \otimes_{\Z} \Q.
    \]
    In particular, $\rho(W) \le \rho(V)$ \textup{(follows from the Projection Formula)}.
\end{enumerate}
\end{lemma}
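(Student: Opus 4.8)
The plan is to establish the three assertions independently; in each case the strategy is the same, namely to produce a one-sided inverse to a pullback map, whose existence is forced by the fact that $d := \deg f$ is a nonzero integer, hence invertible in $\Q$. The structural input for the last two parts is that $f$, being finite and surjective between smooth irreducible varieties of the same dimension $n$, is automatically finite \emph{and flat}: the target is regular, the source is Cohen--Macaulay, and every fiber is finite of the expected dimension $0$, so ``miracle flatness'' applies; consequently $f_{*}\OO_V$ is a locally free $\OO_W$-module of rank $d$.

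For the first assertion I would pass to log completions. Fix a smooth projective completion $\overline{W} \supseteq W$ with reduced simple normal crossing boundary $D_W := \overline{W} \setminus W$, let $\overline{V}$ be a smooth projective model of the function field $\C(V)$ dominating both $V$ and the normalization of $\overline{W}$ in $\C(V)$, and set $D_V := \overline{V} \setminus V$ (which we may take to be a reduced SNC divisor after further blow-ups). Then $f$ extends to a morphism $\overline{f} \colon \overline{V} \to \overline{W}$, and the log ramification formula yields
\[
K_{\overline{V}} + D_V \;=\; \overline{f}^{\,*}(K_{\overline{W}} + D_W) + E, \qquad E \ge 0,
\]
where the ramification of $\overline{f}$ over the interior is absorbed into $D_V$, while the ramification over $D_W$ cancels against the multiplicities occurring in $\overline{f}^{\,*}D_W$, leaving an effective remainder $E$. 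Pulling back pluri-log-canonical forms then gives injections $H^0(\overline{W}, m(K_{\overline{W}}+D_W)) \hookrightarrow H^0(\overline{V}, m(K_{\overline{V}}+D_V))$ for every $m \ge 0$, so $\Kbar(W) \le \Kbar(V)$. If $f$ is \'etale there is no interior ramification and $E = 0$, so the two log canonical divisors coincide and $\Kbar(V) = \Kbar(W)$; I would cite \cite{Iit1982} for the precise bookkeeping behind this last step.

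The remaining two parts are formal. For the second, I would use that $f$ is a proper map between the oriented $2n$-manifolds $V$ and $W$ --- equivalently, that the normalized trace $\tfrac1d\operatorname{tr}_{V/W}$ splits the unit $\OO_W \to f_{*}\OO_V$ --- to produce a transfer homomorphism $f^{!} \colon H_i(W;\Q) \to H_i(V;\Q)$ satisfying $f_{*} \circ f^{!} = d \cdot \mathrm{id}$. Since $d$ is invertible in $\Q$, this makes $f_{*}$ surjective on $H_i(\,\cdot\,;\Q)$ for every $i$, and hence $b_i(V) \ge b_i(W)$; this is the assertion of \cite{Gie1964}. For the third, flatness and finiteness of $f$ furnish a norm homomorphism $N_{V/W}\colon \Pic(V) \to \Pic(W)$ with $N_{V/W}\circ f^{*} = (\,\cdot\,)^{\otimes d}$ on $\Pic(W)$ --- exactly the Projection Formula --- so that $\tfrac1d\,(N_{V/W}\otimes\mathrm{id}_\Q)$ is a left inverse of $f^{*}\otimes\mathrm{id}_\Q$, whence the latter is injective; comparing $\Q$-dimensions gives $\rho(W)\le\rho(V)$. (Equivalently, $N_{V/W}(f^{*}L)=L^{\otimes d}$ shows at once that $f^{*}L$ torsion forces $L$ torsion.)

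The one genuinely delicate point is the divisor comparison underlying the first assertion: one must choose the log completions compatibly so that $f$ extends to a morphism of pairs, verify that every exceptional divisor introduced while resolving $\overline{V}$ lies over $D_W$ (so it can be absorbed into $D_V$ without changing $\Kbar$), and check that the correction term $E$ really does vanish in the \'etale case. Granting this, the second and third assertions are immediate consequences of the existence of the trace/norm splitting together with the invertibility of $\deg f$ in $\Q$.
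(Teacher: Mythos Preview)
The paper does not prove this lemma at all; it is recorded as a list of standard facts with pointers to \cite{Iit1982}, \cite{Gie1964}, and the Projection Formula, and is then invoked freely in later arguments. Your write-up is a correct fleshing-out of those citations and matches the intended content.

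One small caveat on part~(1): your assertion that $E=0$ in the \'etale case is not literally true for an arbitrary choice of compatible log completions, since the extension $\overline{f}$ can still ramify over $D_W$ and resolution may introduce exceptional divisors with nonzero log discrepancy. The equality $\Kbar(V)=\Kbar(W)$ for $f$ \'etale is more cleanly obtained by combining the inequality $\Kbar(V)\ge\Kbar(W)$ you already established with the reverse bound coming from the trace splitting (or the easy addition formula), rather than by trying to force $E=0$; you effectively acknowledge this by deferring the bookkeeping to \cite{Iit1982}. Parts~(2) and~(3) are fine as written.
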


\section{\bf Proof of Main Results}

The results in this section aim to clarify this descent phenomenon by establishing precise geometric conditions under which it holds, starting from the case of negative log Kodaira dimension and proceeding to more general settings. More precisely, we address Question~\ref{Main Ques: Descent of EM} and Question~\ref{Main Ques: Descent of finite homotopy rank-sum}, which concern the descent of the Eilenberg--MacLane property and the finite homotopy rank-sum property under finite morphisms between smooth affine varieties. Our approach proceeds through a sequence of reductions and structural results, beginning with a key observation that serves as a common starting point for many of the subsequent proofs by allowing us to assume surjectivity at the level of fundamental groups.

\begin{remark}\label{Rem: pi_1 level map is surjective}
	We first reduce to the case where the natural homomorphism 
	\[
	f_\ast: \pi_1(X) \longrightarrow \pi_1(Y)
	\]
	is surjective. 
	
	If $f_\ast$ is not surjective, then by standard covering space theory, we can find a smooth affine variety $Z$ such that $f$ admits the following factorization:
	\[
	\begin{tikzcd}[column sep=normal, row sep=normal]
	X \arrow[rr, "f"] \arrow[dr, "h"'] & & Y\\
	& Z \arrow[ur, "g"'] & 
	\end{tikzcd}
	\]
	where $h : X \to Z$ is a finite morphism and $g : Z \to Y$ is a finite covering map satisfying $f = g \circ h$. Moreover, we have 
	\[
	\pi_1(Z) \cong g_\ast(\pi_1(Z)) = f_\ast(\pi_1(X)),
	\]
	so that $h_\ast : \pi_1(X) \to \pi_1(Z)$ is surjective, and $\deg g \cdot \deg h = \deg f$. 

    \medskip
	
	Now, if $Z$ is an Eilenberg--MacLane space, or if it has finite homotopy rank-sum, then the same property holds for $Y$, since a finite covering map $g$ preserves the higher homotopy groups of $Z$ and $Y$. Thus, we can reduce to the case where the induced homomorphism at the level of fundamental groups is surjective, by considering the finite surjective morphism $g$ in place of $f$ and establishing the result for $Z$ first.

\end{remark}

\subsection{On the descent of Eilenberg--MacLane affine surfaces when $\Kbar(X)=-\infty$}
We begin by proving the finite {\'e}tale descent is true for Zariski locally-trivial $\A^1$-bundles.

\begin{prop}\label{Prop: Etale cover of A^1-bundle preserves the bundle structure with the nature of base curve}
Let $V$ be a smooth affine surface admitting a Zariski locally trivial $\A^{1}$-bundle structure over a smooth affine (respectively, projective) curve. Let $\varphi : W \to V$ be a finite \'etale covering of smooth affine surfaces. Then $W$ also admits a Zariski locally trivial $\A^{1}$-bundle structure over a smooth affine (respectively, projective) curve.
\end{prop}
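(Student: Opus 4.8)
The plan is to transport the $\A^1$-bundle structure on $V$ across the étale cover $\varphi$ by recognising $W$ as a fibre product of $V$ with a finite étale cover of the base curve. First I would reduce to the case that $W$ is connected: if $W$ is disconnected then, as $V$ is connected and $\varphi$ is at once open (being étale) and closed (being finite), each connected component $W_j$ of $W$ surjects onto $V$, so $\varphi|_{W_j}\colon W_j\to V$ is itself a finite étale cover; it therefore suffices to treat each $W_j$ separately, and we may assume $W$ connected.

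Let $f\colon V\to B$ be the given Zariski-locally trivial $\A^1$-bundle. Since the fibre $\A^1$ is contractible, Lemma~\ref{Lem: Homotopy equivalence if the fiber of a bundle is contractible} shows that $f$ is a homotopy equivalence, so pullback along $f$ is an equivalence from the category of finite covering spaces of $B$ to that of $V$. Analytically, $\varphi\colon W\to V$ is a finite covering space of $V$, hence is the pullback along $f$ of a finite covering space $\mathcal C$ of $B$; since $B$ is a smooth quasi-projective curve, the Riemann existence theorem realises $\mathcal C$ as the analytification of a finite étale morphism $\psi\colon C\to B$ of smooth curves. Because analytification commutes with fibre products, $W$ and $V\times_B C$ have isomorphic analytifications as covering spaces of $V$; and since for a complex variety the functor from finite étale covers to finite topological covers of its analytification is an equivalence of categories (Riemann existence / Grauert--Remmert), we conclude that $W\cong V\times_B C$ as $V$-schemes.

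Now $V\times_B C$ is the pullback of the Zariski-locally trivial $\A^1$-bundle $f$ along $\psi$, hence is again a Zariski-locally trivial $\A^1$-bundle, this time over $C$; and as $\psi$ is finite, $C$ is affine precisely when $B$ is (and complete precisely when $B$ is), so the nature of the base curve is preserved, which is exactly the assertion. (In the projective-base case the hypothesis that $V$ is affine is in fact vacuous, since a section over a complete curve would embed a complete curve into the affine surface $V$; but the argument above needs no case distinction.) The only step that is not routine base-change bookkeeping is the identification $W\cong V\times_B C$: it relies on the homotopy-equivalence input of Lemma~\ref{Lem: Homotopy equivalence if the fiber of a bundle is contractible}, which is what allows a cover of $V$ to be pushed down to a cover of the curve $B$, together with the comparison between algebraic finite étale covers and topological finite covers. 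An alternative, purely algebraic derivation of the same isomorphism would pass to a smooth projective model of $f$ (a $\PP^1$-fibration) and apply Stein factorisation to the composite $W\to V\to B$, but the covering-space route seems cleaner.
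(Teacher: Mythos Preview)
Your proof is correct and takes a genuinely different route from the paper's. The paper argues indirectly: since $V\simeq B$ is a $K(G,1)$-space (the case $B\cong\PP^1$ being handled separately as trivial), so is its finite cover $W$; then $\Kbar(W)=\Kbar(V)=-\infty$, and the classification of Eilenberg--MacLane affine surfaces (Theorem~\ref{Thm: Classification of EM surfaces}) forces $W$ to be an $\A^1$-bundle over some curve $C\not\cong\PP^1$, after which a separate Betti-number and Euler-characteristic comparison is used to decide whether $C$ is affine or projective. Your approach is more direct and more elementary: exploiting that $f\colon V\to B$ is a homotopy equivalence, you push the cover $W\to V$ down to a finite \'etale cover $C\to B$ via Riemann existence and identify $W\cong V\times_B C$ explicitly, so the bundle structure on $W$ is simply the pulled-back one. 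This bypasses the classification theorem entirely, gives the sharper information that the $\A^1$-bundle on $W$ is the pullback of the one on $V$ (not merely some abstract bundle), and treats the affine and projective base cases uniformly. The paper's argument, by contrast, is consistent with its global strategy of routing everything through the classification results of Theorems~\ref{Thm: Classification of EM surfaces} and~\ref{Thm: Classification of affine surfaces with finite homotopy rank-sum}. (Your parenthetical about the projective-base case is somewhat garbled---affineness of $V$ is certainly not vacuous there---but as you note it plays no role in the argument.)
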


\begin{proof}
Assume first that $V$ admits a Zariski locally trivial $\A^{1}$-bundle structure over a smooth affine curve (or a projective curve of positive genus), say $B$. Then by Lemma~\ref{Lem: Homotopy equivalence if the fiber of a bundle is contractible}, the fiber being contractible implies that $V$ is homotopy equivalent to $B$, hence $V$ is a $K(G,1)$-space.

Since $\varphi : W \to V$ is a finite topological covering, $W$ is also a $K(G,1)$-space. Moreover, $\overline{\kappa}(V) = -\infty$ implies $\overline{\kappa}(W) = -\infty$. Therefore, by Theorem~\ref{Thm: Classification of EM surfaces}, the surface $W$ admits a Zariski locally trivial $\A^{1}$-bundle structure over a smooth algebraic curve $C\not\cong \PP^1$. Again, Lemma~\ref{Lem: Homotopy equivalence if the fiber of a bundle is contractible} yields that $W$ is homotopy equivalent to $C$.

\medskip
\noindent{\bf Case 1.} \emph{$B$ is affine.}

\medskip
Assume that $C$ is projective. Then either $C$ is an elliptic curve or $\mathrm{genus}(C)\geq 2$. In the latter case, $\pi_1(C)$ is not free, whereas $\pi_1(B)$ is free since $B$ is affine. The induced morphism $\varphi_\ast : \pi_1(W)\hookrightarrow \pi_1(V)$ is injective and $\pi_1(W)\cong \pi_1(C)$ is a subgroup of $\pi_1(V)\cong \pi_1(B)$, hence must be free. Thus $\mathrm{genus}(C)\geq 2$ is impossible, and therefore $C$ must be an elliptic curve.

In this situation
\[
e(W)=e(C)=0,\qquad 
e(W)=\deg(\varphi)\cdot e(V), \qquad
e(V)=e(B).
\]
Hence $e(B)=0$. Since $B$ is a smooth affine curve, it follows that $B\cong \C^\ast$ and $\pi_1(V)\cong \pi_1(\C^\ast)\cong \Z$. But $\pi_1(W)\cong \pi_1(C)\cong \Z\oplus\Z$, contradicting injectivity of $\varphi_\ast$. Thus $C$ cannot be projective. This concludes the affine case.

\medskip
\noindent{\bf Case 2.} \emph{$B$ is projective of positive genus.}

\medskip
Suppose $C$ is affine. Then $b_2(W)=b_2(C)=0$, while $b_2(V)=b_2(B)=1$. This contradicts Lemma~\ref{Lem: Finite morphism basic properties}\,(2), which prohibits such a finite surjective morphism. Therefore, if $B$ is projective of positive genus, $C$ must also be projective of positive genus.

\medskip
Finally, if $B\cong \PP^1$, then $V$ is simply connected. Hence, any finite \'etale covering $\varphi$ must be an isomorphism. This completes the proof.
\end{proof}

\medskip

Now we continue with the simplest geometric situation where the log Kodaira dimension of $X$ is $-\infty$. This case already captures the essential features of the descent problem and provides the basic mechanism behind our general arguments. The following theorem shows that in this case, the Eilenberg--MacLane property descends under any finite surjective morphism.\\

We now derive another important consequence of this theorem that shows how this phenomenon extends to a wider class of affine surfaces by moving from the special case of a trivial $\A^1$-bundle to a more general situation in which the total space itself may carry a nontrivial $\A^1$-bundle structure. The following result shows that the descent of the Eilenberg--MacLane property continues to hold in this broader setting.

\begin{theorem}\label{Thm: Descent of A^1 bundle over affine curve}
Let $X$ be a Zariski-locally trivial $\A^1$-bundle over a smooth affine irreducible curve. Suppose $f \colon X \to Y$ is a finite surjective morphism onto a smooth affine surface. Then $Y$ is a Zariski-locally trivial $\A^1$-bundle over a smooth affine irreducible curve.
\end{theorem}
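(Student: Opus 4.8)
The plan is to reduce first to the case where $f_\ast\colon\pi_1(X)\to\pi_1(Y)$ is surjective, via Remark~\ref{Rem: pi_1 level map is surjective}: writing $f=g\circ h$ with $h\colon X\to Z$ inducing a surjection on $\pi_1$ and $g\colon Z\to Y$ a finite \'etale cover, I would prove the theorem for $h$ and then recover $Y$ from $Z$ by the ``finishing argument'' below, which only needs that $Y$ is an Eilenberg--MacLane space with $\Kbar(Y)=-\infty$ and $b_2(Y)=0$; the \'etale cover $g$ transports all three (higher homotopy groups because $g$ is a topological covering, $\Kbar$ by Lemma~\ref{Lem: Finite morphism basic properties}(1), $b_2$ by Lemma~\ref{Lem: Finite morphism basic properties}(2)). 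So assume $f_\ast$ is onto. Since $X$ is a Zariski-locally trivial $\A^1$-bundle over a smooth affine irreducible curve $B$, Lemma~\ref{Lem: Homotopy equivalence if the fiber of a bundle is contractible} gives a homotopy equivalence $X\simeq B$; hence $X$ is aspherical with $\pi_1(X)\cong\pi_1(B)$ free, $b_2(X)=0$ by Corollary~\ref{Cor: b_2 of an A^1-bundle}, and $\Kbar(X)=-\infty$ (complete the bundle to a $\PP^1$-fibration $\ol X\to\ol B$; the boundary divisor meets a general fibre $\cong\PP^1$ in one point, so $K_{\ol X}+D$ has negative degree there and no positive multiple of it is effective).

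The main step is to show $Y$ is an Eilenberg--MacLane space. Let $p\colon\wt Y\to Y$ be the universal covering and put $P:=X\times_Y\wt Y$. Since $f_\ast$ is onto, Lemma~\ref{Lem: Pull-back of connected covering} gives that $P$ is connected; being the pullback of the covering $p$ along $f$, the map $P\to X$ is a covering, so $P$ is aspherical (a cover of the aspherical $X$) with $\pi_1(P)$ a subgroup of the free group $\pi_1(X)$, hence free by Nielsen--Schreier; therefore $P$ has the homotopy type of a wedge of circles and $H_2(P;\Q)=0$. On the other hand $P\to\wt Y$ is the pullback of the finite surjective morphism $f$, hence a finite surjective morphism of smooth complex surfaces, in particular flat; so the trace homomorphism attached to $P\to\wt Y$ (the analytic analogue of Lemma~\ref{Lem: Finite morphism basic properties}(2)) makes $H_2(P;\Q)\to H_2(\wt Y;\Q)$ surjective, whence $H_2(\wt Y;\Q)=0$. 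Now $\wt Y$ is a $2$-dimensional Stein manifold (an unramified cover of the Stein surface $Y$), so by the Andreotti--Frankel theorem~\cite{AF1959} we have $H_i(\wt Y;\Z)=0$ for $i>2$ and $H_2(\wt Y;\Z)$ free abelian; together with $H_2(\wt Y;\Q)=0$ this forces $H_2(\wt Y;\Z)=0$. Since $\wt Y$ is simply connected, iterating the Hurewicz theorem yields $\pi_i(\wt Y)=0$ for all $i$, so $\wt Y$ is contractible and $\pi_i(Y)\cong\pi_i(\wt Y)=0$ for $i\ge 2$; that is, $Y$ is a $K(\pi_1(Y),1)$-space.

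For the finishing argument, Lemma~\ref{Lem: Finite morphism basic properties} gives $\Kbar(Y)\le\Kbar(X)=-\infty$ and $b_2(Y)\le b_2(X)=0$, so $\Kbar(Y)=-\infty$ and $b_2(Y)=0$. If $Y$ is not contractible, Theorem~\ref{Thm: Classification of EM surfaces}(1) applies: $Y$ is a Zariski-locally trivial $\A^1$-bundle over a smooth algebraic curve $C\not\cong\A^1,\PP^1$; since $Y\simeq C$ by Lemma~\ref{Lem: Homotopy equivalence if the fiber of a bundle is contractible}, we get $b_2(C)=b_2(Y)=0$, so $C$ is non-complete, hence affine, and it is irreducible as the base of a fibration of the irreducible surface $Y$, which is the desired conclusion. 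If $Y$ is contractible, then $Y$ is a smooth contractible affine surface with $\Kbar(Y)=-\infty$, whence $Y\cong\A^2$ by the structure theory of Miyanishi--Sugie, and $\A^2$ is a Zariski-locally trivial $\A^1$-bundle over $\A^1$. (Alternatively one can treat both cases uniformly without the classification: Miyanishi--Sugie gives an $\A^1$-fibration $q\colon Y\to C$; $Y$ smooth is $\Q$-factorial, so Lemma~\ref{Lem: Q-factorial fibration} yields $C$ affine with all fibres of $q$ irreducible; $Y$ being $K(G,1)$ forces $G$ torsion-free by Lemma~\ref{Lem: Well-known about EM}(2), so Lemma~\ref{Lem: Gang's Generalization} shows $q$ has no multiple fibres; a reduced irreducible degenerate fibre of an $\A^1$-fibration on a smooth affine surface is again $\cong\A^1$, so every fibre of $q$ is $\cong\A^1$ and $q$ is a Zariski-locally trivial $\A^1$-bundle over the smooth affine irreducible curve $C$.)

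The step I expect to be the main obstacle is the surjectivity of $H_2(P;\Q)\to H_2(\wt Y;\Q)$, since $P$ and $\wt Y$ are non-algebraic complex surfaces (covers of affine surfaces) and Lemma~\ref{Lem: Finite morphism basic properties}(2) as stated is about algebraic varieties; one has to check that a finite flat morphism of smooth complex manifolds still carries a trace with $g_\ast g^\ast=\deg(f)\cdot\mathrm{id}$ on rational (co)homology, which is what produces the surjectivity. The smaller technical points are that an unramified cover of a Stein manifold is again Stein, and, in the contractible case, the identification $Y\cong\A^2$ (or the fibre-wise analysis replacing it).
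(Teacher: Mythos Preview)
Your main argument is correct and follows a genuinely different route from the paper. After the same reduction via Remark~\ref{Rem: pi_1 level map is surjective}, the paper analyses an $\A^1$-fibration $\varphi\colon Z\to D$ directly: Suzuki's formula (Theorem~\ref{Suzuki's formula}) combined with $b_2(Z)=0$ forces $D$ affine and all fibres irreducible, and the ramified covering trick (Theorem~\ref{Ramified covering trick}) together with Proposition~\ref{Prop: Etale cover of A^1-bundle preserves the bundle structure with the nature of base curve} rules out multiple fibres, so $Z$ is already a Zariski-locally trivial $\A^1$-bundle over an affine curve; only then is Theorem~\ref{Thm: Classification of EM surfaces} invoked for the \'etale descent to $Y$. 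You instead prove asphericity of $Y$ in one stroke by pulling back the universal cover and killing $H_2(\wt Y;\Q)$ via a transfer, then apply Theorem~\ref{Thm: Classification of EM surfaces} once. Your route is shorter and more conceptual, sidestepping Suzuki's formula and the covering trick entirely; the paper's route stays inside the algebraic category (avoiding precisely the analytic transfer you flag) and its fibration machinery is reused in the later proofs, e.g.\ Theorem~\ref{Thm: Affine surfaces with kappa negative properly dominated by 2-torus}. Your flagged concern is not a gap: for a finite surjective holomorphic map $f'\colon P\to\wt Y$ between equidimensional smooth complex manifolds, Poincar\'e--Lefschetz duality $H^k(-;\Q)\cong H^{BM}_{4-k}(-;\Q)$ and proper pushforward in Borel--Moore homology give a Gysin map $f'_{!}$ with $f'_{!}f'^{*}=\deg(f')\cdot\mathrm{id}$; since $P$, being a cover of $X\simeq B$, has the homotopy type of a $1$-complex, $H^2(P;\Q)=0$ and injectivity of $f'^{*}$ yields $H^2(\wt Y;\Q)=0$. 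That unramified covers of Stein manifolds are Stein is classical (K.~Stein), so Andreotti--Frankel applies to $\wt Y$ as you use it.

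One caution on your parenthetical alternative: in this paper ``$\Q$-factorial'' means $\Pic\otimes\Q=0$ (inspect the proof of Lemma~\ref{Lem: Q-factorial fibration}, which derives a contradiction from ``$\Pic(X)$ cannot be torsion''), not the usual ``Weil $=\Q$-Cartier''. Hence ``$Y$ smooth is $\Q$-factorial'' is false in the paper's sense; to invoke Lemma~\ref{Lem: Q-factorial fibration} you would first need $\rho(Y)\le\rho(X)=0$ from Lemma~\ref{Lem: Finite morphism basic properties}(3), after which the alternative argument does go through. This does not affect your main line.
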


\begin{proof}
With the notations in Remark~\ref{Rem: pi_1 level map is surjective}, we first factorize $f$ into $g$ and $h$.
By construction, the induced homomorphism $h_\ast : \pi_1(X) \to \pi_1(Z)$ is surjective. Since $X$ is an $\A^1$-bundle over a smooth affine curve, we have $b_2(X)=0$ by Corollary~\ref{Cor: b_2 of an A^1-bundle}. Using Lemma~\ref{Lem: Finite morphism basic properties}(2), we deduce $b_2(Z)=0$.

As $\Kbar(X) = -\infty$, then $\Kbar(Y) = \Kbar(Z) = -\infty$ using Lemma~\ref{Lem: Finite morphism basic properties}(3). Hence $Z$ admits an $\A^1$-fibration $\varphi : Y \to D$ over a smooth algebraic curve $D$. The induced map $\varphi_\ast : \pi_1(Z) \to \pi_1(D)$ is surjective, yielding $b_1(Z) \ge b_1(D)$.

Apply Suzuki’s formula (Lemma~\ref{Suzuki's formula}) to $\varphi$:
\[
e(Z) = e(D) + s,\qquad 
s := \sum_{i=1}^{r}\bigl(e(F_i)-e(\A^1)\bigr),
\]
where $\{F_i: 1\leq i\leq r\}$ are the singular fibers. Using $b_2(Z)=0$, this gives:
\[
b_1(D) - b_1(Z) = b_2(D) + s.
\]
Since $b_1(Z) \ge b_1(D)$, we obtain
\[
b_1(Z)=b_1(D), \qquad b_2(D)=0=s.
\]
Thus $D$ is affine and there is no reducible fiber, i.e., every singular fiber of $\varphi$, if there is any, is scheme-theoretically isomorphic to $m\A^1$ for an integer $m>1$ using \cite[Chapter I, Lemma 4.4]{Miy1981}).

If $\varphi$ has any multiple fiber, we eliminate all multiplicities via the ramified covering trick (cf. Lemma~\ref{Ramified covering trick}). Thus, we obtain a finite {\'e}tale cover $Z' \to Z$ such that $Z'$ admits an $\A^1$-fibration $\varphi' : Z' \to D'$ over an affine curve $D'$ having no multiple fiber. Since $h_\ast$ is surjective, we may pull back the {\'e}tale cover $Z'\to Z$ via $h$ to get a smooth affine surface $X'$ that is irreducible with a pulled back {\'e}tale cover $X'\to X$ admitting a finite surjective morphism $h' : X' \to Z'$. See the following commutative diagram:

\[\begin{tikzcd}[row sep=small, column sep=large]
	&&& {D'} \\
	{X':=X\times_{Z}Z'} && {Z':=\overline{Z\times_{D}D'}} \\
	&&& D \\
	X && Z
	\arrow["\begin{array}{c} \text{ramified}\\\text{covering} \end{array}", from=1-4, to=3-4]
	\arrow["{h'}", two heads, "\text{finite}"', from=2-1, to=2-3]
	\arrow["\text{finite {\'e}tale}"', from=2-1, to=4-1]
	\arrow["\varphi'"', "\text{no multiple fiber}", from=2-3, to=1-4]
	\arrow["\text{finite {\'e}tale}"', from=2-3, to=4-3]
	\arrow["h", two heads, "\text{finite}"', from=4-1, to=4-3]
	\arrow["\varphi"', from=4-3, to=3-4]
\end{tikzcd}\]

By Proposition~\ref{Prop: Etale cover of A^1-bundle preserves the bundle structure with the nature of base curve}, $X'$ is again an $\A^1$-bundle over a smooth affine curve---a surface that is structurally similar to $X$. Therefore, by what we proved above, $\varphi'$ must not have any reducible fiber. Thus, in turn, $\varphi$ has no multiple fibers as the ramified covering trick to eliminate multiple fiber ensures that the irreducible multiple fiber, say $\varphi^*(p)$, to split into reducible fiber with irreducible components being affine lines taken with reduced structures in $\varphi'^*(q)$ so that the greatest common divisor of these split multiplicities become $1$ and the fiber $\varphi'^*(q)$ become non-multiple, where $q$ is the unique pre-image of $p$ under the totally ramified cover $D'\to D$. This implies $\varphi$ is Zariski--locally trivial.

Thus $Y$ being a finite {\'e}tale descent of $Z$ is first of all topologically a $K(G,1)$-space and thus $Y$ admits a Zariski--locally trivial $\A^1$-bundle structure over a smooth algebraic curve, say $B$, using Theorem \ref{Thm: Classification of EM surfaces}. Now, $b_2(Z)=0$ implies that $b_2(Y)=0$ and hence $B$ is affine, as desired, by Lemma \ref{Lem: Finite morphism basic properties}(2) followed by Corollary \ref{Cor: b_2 of an A^1-bundle}. This completes the proof.
\end{proof}

\medskip

We deduce the following result as a consequence of Theorem \ref{Thm: Descent of A^1 bundle over affine curve}. This result is well-known and was proved in \cite{Miy1986} using the proof of the cancellation theorem for $\A^2$ and in \cite[Theorem 3]{GS1984} using the Mumford-Ramanujam method and Milnor's classification of finite groups acting freely on homotopy $3$-sphere.

\begin{cor}\label{Cor: Affine plane is the only smooth surface properly dominated by an affine plane}
   Let $Y$ be a smooth affine surface properly dominated by $\A^2$. Then $Y \cong \A^2$. 
\end{cor}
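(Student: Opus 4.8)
The plan is to deduce this immediately from Theorem~\ref{Thm: Descent of A^1 bundle over affine curve}. First I would observe that $\A^2 = \A^1 \times \A^1$ is a trivial, hence Zariski-locally trivial, $\A^1$-bundle over the smooth affine irreducible curve $\A^1$. So if $f \colon \A^2 \to Y$ is a finite surjective morphism onto a smooth affine surface, Theorem~\ref{Thm: Descent of A^1 bundle over affine curve} applies verbatim and tells us that $Y$ is a Zariski-locally trivial $\A^1$-bundle, say with projection $\pi \colon Y \to B$, over some smooth affine irreducible curve $B$. It then remains only to identify $B$ with $\A^1$.

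To do this I would argue as follows. Since the fiber $\A^1$ is contractible, Lemma~\ref{Lem: Homotopy equivalence if the fiber of a bundle is contractible} shows that $\pi$ is a homotopy equivalence, so $\pi_1(Y) \cong \pi_1(B)$. On the other hand, $f \colon \A^2 \to Y$ is a dominant morphism of normal irreducible varieties with $\pi_1(\A^2) = (1)$, so by Serre's Lemma~\ref{Lem: surjective induced map at the level of pi_1}(1) the (trivial) image of $f_\ast$ has finite index in $\pi_1(Y)$; hence $\pi_1(Y)$ is finite, and therefore $\pi_1(B)$ is finite. But $B$ is a smooth affine curve, so $\pi_1(B)$ is a free group, and a finite free group is trivial. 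Thus $B$ is simply connected, which forces $B \cong \A^1$ (the smooth completion of $B$ has genus $0$ and exactly one point removed).

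Finally, $Y$ is an $\A^1$-bundle over the smooth affine rational curve $\A^1$, so by Lemma~\ref{Lem: A^1-bundle over affine rational curve is trivial} this bundle is trivial, giving $Y \cong \A^1 \times \A^1 \cong \A^2$, as claimed. I do not anticipate any real obstacle here beyond Theorem~\ref{Thm: Descent of A^1 bundle over affine curve} itself, on which the whole argument rests; the only step requiring a little care is the identification of the base curve, which combines the homotopy equivalence $Y \simeq B$ with the finiteness of $\pi_1(Y)$ supplied by Serre's theorem, together with the fact that a subgroup-free constraint pins $B$ down to $\A^1$.
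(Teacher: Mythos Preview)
Your proposal is correct and follows essentially the same route as the paper: invoke Theorem~\ref{Thm: Descent of A^1 bundle over affine curve} to obtain an $\A^1$-bundle structure $Y \to B$ over a smooth affine curve, identify $B$ with $\A^1$, and conclude via triviality of the bundle. The only minor difference is in how the base curve is pinned down: the paper uses the Betti-number inequality $b_1(\A^2)\ge b_1(Y)=b_1(B)$ from Lemma~\ref{Lem: Finite morphism basic properties}(2) to force $b_1(B)=0$, whereas you use Serre's Lemma~\ref{Lem: surjective induced map at the level of pi_1} to make $\pi_1(B)$ finite and then invoke freeness; these are equivalent in spirit and neither buys anything the other does not.
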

\begin{proof}
    Theorem \ref{Thm: Descent of A^1 bundle over affine curve} implies that $Y$ must be isomorphic to an $\A^1$-bundle over a smooth affine curve, say $C$. Thus, $Y$ is homotopy equivalent to $C$ and hence using Lemma \ref{Lem: Finite morphism basic properties}(2), we have $$0=b_1(\A^2)\geq b_1(Y)=b_1(C),$$ which in turn implies that $C\cong \A^1$ as $b_1(C)=0$ and $C$ is a smooth affine curve. Therefore, we conclude that $Y\cong \A^2$.
\end{proof}

Our next result shows that the product structure of $X$ with an $\A^1$-factor is preserved up to the number of punctures on the base curve whenever the base curve is affine and rational, extending a classical result of M.~Furushima (see \cite[Main Theorem, non-singular case]{Fur1989}).

\begin{cor}\label{Cor: Generalization of Furushima's result}
	Let $f: X \to Y$ be a finite surjective morphism of smooth affine surfaces with $X=\A^1 \times \C^{r*}$ for some $r \geq 0$. Then $Y \cong \A^1 \times \C^{s*}$ for some $0\leq s \leq r$.
\end{cor}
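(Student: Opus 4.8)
The plan is to apply Theorem~\ref{Thm: Descent of A^1 bundle over affine curve} to the morphism $f : X = \A^1 \times \C^{r*} \to Y$, which immediately tells us that $Y$ is a Zariski-locally trivial $\A^1$-bundle over some smooth affine irreducible curve $C$; by Lemma~\ref{Lem: Homotopy equivalence if the fiber of a bundle is contractible}, $Y$ is then homotopy equivalent to $C$, so $C$ inherits whatever topological constraints $Y$ does. The first task is therefore to pin down $C$ up to isomorphism. Since $X \simeq \C^{r*}$ is homotopy equivalent to a wedge of $r$ circles, we have $b_1(X) = r$ and $b_i(X) = 0$ for $i \ge 2$; by Lemma~\ref{Lem: Finite morphism basic properties}(2), $b_1(C) = b_1(Y) \le b_1(X) = r$, so $C$ is a smooth affine rational curve, i.e.\ $C \cong \C^{s*}$ for some $0 \le s \le r$.

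Next I would upgrade ``$\A^1$-bundle over $\C^{s*}$'' to ``product $\A^1 \times \C^{s*}$''. This is exactly Lemma~\ref{Lem: A^1-bundle over affine rational curve is trivial}: any $\A^1$-bundle over a smooth affine rational curve is trivial. Hence $Y \cong \A^1 \times \C^{s*}$, and combined with the bound $s \le r$ from the Betti-number computation, this is precisely the assertion. One should also check the edge cases: if $r = 0$ then $X = \A^2$ and Corollary~\ref{Cor: Affine plane is the only smooth surface properly dominated by an affine plane} already gives $Y \cong \A^2 = \A^1 \times \C^{0*}$, so $s = 0$; and the cases $s = r$ (e.g.\ $f$ an isomorphism) and $s < r$ both genuinely occur, so no sharper bound is expected.

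I do not anticipate a serious obstacle here, since the heavy lifting is entirely contained in Theorem~\ref{Thm: Descent of A^1 bundle over affine curve} and the triviality statement of Lemma~\ref{Lem: A^1-bundle over affine rational curve is trivial}. The only point requiring a word of care is the identification of $C$: knowing $b_1(C) \le r$ and $C$ affine forces $C \cong \C^{s*}$ with $s = b_1(C) \le r$, using that a smooth affine curve is determined up to isomorphism away from its first Betti number only after also fixing the genus of its completion --- but here $b_1(Y) = b_1(C) = 2g + (\text{number of punctures}) - 1$ combined with $C$ rational (forced by $b_1(C) \le r < \infty$ and the finite-morphism constraint, since a positive-genus base would have to be dominated compatibly by the genus-zero $X$, contradicting Lemma~\ref{Lem: surjective induced map at the level of pi_1} applied to the composite $X \to Y \to C$ whose general fiber is $\A^1$, hence connected, forcing $\pi_1(X) \twoheadrightarrow \pi_1(C)$ and thus $\pi_1(C)$ a quotient of the free group $\pi_1(\C^{r*})$, hence free, hence $C$ rational). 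Once $C$ is rational the punctures are counted by $b_1$, giving $s \le r$ cleanly.
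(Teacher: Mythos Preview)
Your overall strategy is the same as the paper's: apply Theorem~\ref{Thm: Descent of A^1 bundle over affine curve} to get an $\A^1$-bundle structure $Y \to C$ over a smooth affine curve, argue that $C$ is rational, invoke Lemma~\ref{Lem: A^1-bundle over affine rational curve is trivial} for triviality, and bound $s$ via $b_1(X) \ge b_1(Y)$. The paper does exactly this.

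However, your justification that $C$ is rational has a genuine gap. You argue that $\pi_1(C)$ is a quotient of the free group $\pi_1(\C^{r*})$, hence free, hence $C$ is rational. The implication ``$\pi_1(C)$ free $\Rightarrow$ $C$ rational'' is false for smooth affine curves: \emph{every} smooth affine curve has free fundamental group. Concretely, an elliptic curve minus one point has $\pi_1$ free of rank~$2$, indistinguishable at the level of fundamental groups from $\C^{**}$. So nothing in your $\pi_1$ argument rules out positive genus. (There is also a minor slip: the general fiber of the composite $X \to Y \to C$ is $f^{-1}(\A^1)$, not $\A^1$ itself, so connectedness of that fiber is not automatic; but this is not the essential issue.)

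The paper closes this gap cleanly with a birational argument: $X = \A^1 \times \C^{r*}$ is rational, hence $Y$ is unirational, hence rational by Castelnuovo's criterion; since $Y$ dominates $C$, the curve $C$ is unirational, hence rational by L\"uroth. Once $C$ is known to be rational and affine, $b_1(C) = s$ gives $C \cong \C^{s*}$ with $s \le r$, and the rest of your argument goes through.
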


\begin{proof}
    Since $X=\A^1 \times \C^{r*}$ for some $r \geq 0$, it is a smooth rational surface. Thus $Y$ is an unirational surface and hence a rational surface by using Castelnuovo's rationality criterion. Using the notations from Theorem~\ref{Thm: Descent of A^1 bundle over affine curve}, we see that $Y$ is a locally trivial bundle over a smooth affine curve $D$. As $Y$ is rational, so is $D$, and hence $D \cong \C^{s*}$ for some $s \geq 0$. By Lemma~\ref{Lem: A^1-bundle over affine rational curve is trivial}, it follows that $Y$ is a Zariski-locally trivial $\A^1$-bundle over $\C^{s*}$, i.e., $Y \cong \A^1 \times \C^{s*}$.

    Finally, the inequality $r \geq s$ follows from $b_1(X) \geq b_1(Y)$.
\end{proof}

The preceding theorem provides a concrete case where the descent of the Eilenberg-MacLane property holds true.\\

\medskip

Before proving our main result in this section, we require the following theorem, which has recently been proved by the author in collaboration with A. Maharana and A. J. Parameswaran in a different project. This result will be published elsewhere. We state the result here without providing a proof.

\begin{theorem}\label{Thm: proper descent of homotopy sphere}
    Let $f\st X \to Y$ be a finite surjective morphism of complex smooth affine surfaces. Suppose that $X$ is homotopic to $S^2$. Then $Y$ is one of the following smooth affine surfaces:
    \begin{enumerate}[\indent\rm(1)]
        \item a topologically contractible surface;
        \item homotopic to $S^2$ again;
        \item a $\Q$-homology plane having fundamental group $\Z/2\Z$.
    \end{enumerate}
\end{theorem}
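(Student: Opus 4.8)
The plan is to reduce the statement to the case of a simply connected target, classify the simply connected smooth affine surfaces that are properly dominated by a homotopy $2$-sphere, and then transport the conclusion down the universal covering of $Y$.

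First I would record what the hypothesis gives: since $X\simeq S^2$ it is simply connected with $b_1(X)=0$, $b_2(X)=1$, and $b_i(X)=0$ for $i\ge 3$. Because $f$ is dominant and quasi-finite, Lemma~\ref{Lem: surjective induced map at the level of pi_1}(1) forces $[\pi_1(Y):f_\ast(\pi_1(X))]$ to be finite, and as $\pi_1(X)=(1)$ this means $\pi_1(Y)$ is a \emph{finite} group. Hence the covering of $Y$ attached to the trivial subgroup $f_\ast(\pi_1(X))$ is finite, so the factorization of Remark~\ref{Rem: pi_1 level map is surjective} is legitimate: it produces a genuine smooth affine surface $Z$, a finite surjective morphism $h\colon X\to Z$, and a finite covering $g\colon Z\to Y$ with $f=g\circ h$ and $\pi_1(Z)\cong f_\ast(\pi_1(X))=(1)$. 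Thus $Z$ is the universal cover $\widetilde Y$, and writing $\Gamma:=\pi_1(Y)$ we have $Y=Z/\Gamma$ with $\Gamma$ finite acting freely.

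Next I would analyse $Z$ directly. It is a simply connected smooth affine surface admitting a finite surjective morphism from $X$; by Andreotti--Frankel, $H_i(Z;\Z)=0$ for $i>2$ and $H_2(Z;\Z)$ is free, while Lemma~\ref{Lem: Finite morphism basic properties}(2) applied to $h$ gives $b_2(Z)\le b_2(X)=1$. If $b_2(Z)=0$ then $Z$ is acyclic and simply connected, hence contractible by the Hurewicz and Whitehead theorems; if $b_2(Z)=1$ then $H_2(Z;\Z)\cong\Z$, so $\pi_2(Z)\cong\Z$ by Hurewicz, and a generator gives a map $S^2\to Z$ inducing an isomorphism on integral homology between simply connected CW complexes, which is therefore a homotopy equivalence. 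Either way $Z$ is topologically contractible or homotopic to $S^2$. To finish, I would descend along $g$. If $Z$ is contractible, then $Y=Z/\Gamma$ is a smooth affine $K(\Gamma,1)$-space; Lemma~\ref{Lem: Well-known about EM}(2) makes $\Gamma$ torsion-free, hence trivial, so $Y=Z$ is contractible --- alternative~(1). If $Z\simeq S^2$, then $e(Z)=|\Gamma|\cdot e(Y)$ with $e(Z)=e(S^2)=2$ and $e(Y)\in\Z_{>0}$, so $|\Gamma|\in\{1,2\}$; $|\Gamma|=1$ gives $Y=Z\simeq S^2$ --- alternative~(2) --- while $|\Gamma|=2$ gives $\Gamma\cong\Z/2\Z$, whence $H_1(Y;\Z)\cong\Z/2\Z$ and $b_1(Y)=0$, and then $e(Y)=2/|\Gamma|=1$ forces $b_2(Y)=0$; combined with $b_i(Y)=0$ for $i\ge 3$ (Andreotti--Frankel), $Y$ is a $\Q$-homology plane with $\pi_1(Y)\cong\Z/2\Z$ --- alternative~(3).

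The hard part is not really the topology, which is formal once the cited inputs are granted; it is ensuring that the reduction of Remark~\ref{Rem: pi_1 level map is surjective} actually applies, i.e. that the intermediate object $Z$ is an honest smooth affine variety dominated by $X$ rather than merely a topological cover of $Y$. This is exactly where the finiteness of $\pi_1(Y)$ --- Serre's theorem for the quasi-finite morphism $f$ --- is indispensable. A minor but necessary subtlety is the freeness of $H_2(Z;\Z)$ coming from Andreotti--Frankel, without which the numerical bound $b_2(Z)\le 1$ could not be upgraded to the exact identification of $H_2(Z;\Z)$ needed to run Hurewicz and Whitehead.
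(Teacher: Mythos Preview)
The paper does not actually prove this theorem: it is quoted as a result obtained jointly with A.~Maharana and A.~J.~Parameswaran and explicitly deferred to a separate publication, so there is no in-paper proof to compare against.

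That said, your argument is correct and self-contained, and it is very much in the spirit of the paper's own techniques --- indeed, the proof of Proposition~\ref{Prop: Affine surface properly dominated by 2-torus and with finite pi_1} runs the same reduction (pass to the simply connected cover $\widetilde Y$, bound $b_2$ using Lemma~\ref{Lem: Finite morphism basic properties}(2), then use the multiplicativity $e(\widetilde Y)=|\pi_1(Y)|\cdot e(Y)$ to pin down $|\pi_1(Y)|\in\{1,2\}$). Your observation that $e(Y)=1+b_2(Y)\ge 1$ because $\pi_1(Y)$ is finite is exactly what makes the Euler-characteristic divisibility argument go through, and your handling of the contractible-$Z$ branch via Lemma~\ref{Lem: Well-known about EM}(2) is clean. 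The only point worth flagging is that you are tacitly using that a finite \'etale cover of a smooth affine variety is again smooth affine (so that Andreotti--Frankel applies to $Z$); this is standard and is used freely throughout the paper, but it is the one place where the algebro-geometric input goes beyond pure topology.
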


Next, we conclude this section with the proof of one of our key results.

\begin{theorem}\label{Main Theorem - kappa bar negative case}
	Let $f \colon X \to Y$ be a finite surjective morphism of smooth affine surfaces. If $\Kbar(X)=-\infty$ and $X$ satisfies the finite homotopy rank-sum property, then so does $Y$.
\end{theorem}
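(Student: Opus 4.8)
The plan is to reduce to the classification of smooth affine surfaces with finite homotopy rank-sum and log non-general type (Theorem~\ref{Thm: Classification of affine surfaces with finite homotopy rank-sum}) together with the structural results already established for the case $\Kbar(X)=-\infty$. First I would invoke Remark~\ref{Rem: pi_1 level map is surjective} to factor $f = g\circ h$ with $h\colon X\to Z$ finite and inducing a surjection on $\pi_1$, and $g\colon Z\to Y$ a finite covering; since a finite covering preserves all higher homotopy groups, it suffices to prove that $Z$ has finite homotopy rank-sum. So from now on I assume $f_*\colon \pi_1(X)\to\pi_1(Y)$ is surjective and work with $Y$ directly. Since $\Kbar(X)=-\infty$, Lemma~\ref{Lem: Finite morphism basic properties}(1) gives $\Kbar(Y)=-\infty$ as well.

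Next I would identify what $X$ looks like. A smooth affine surface with $\Kbar(X)=-\infty$ satisfying the finite homotopy rank-sum property must, by Theorem~\ref{Thm: Classification of affine surfaces with finite homotopy rank-sum}, be either an Eilenberg--MacLane $K(\pi,1)$-space, a $\Q$-homology plane with $\pi_1\cong\Z/2\Z$, or a surface homotopic to $S^2$. I would treat these three cases separately. In the $K(\pi,1)$ case, Theorem~\ref{Thm: Classification of EM surfaces}(1) forces $X$ to be a Zariski-locally trivial $\A^1$-bundle over a smooth affine curve not isomorphic to $\A^1$ (it cannot be $\PP^1$ since that bundle would be simply connected, not the present $K(\pi,1)$ — actually it could a priori be $\PP^1$ only if $X$ were contractible, but then it is still covered below), and then Theorem~\ref{Thm: Descent of A^1 bundle over affine curve} directly shows $Y$ is again a Zariski-locally trivial $\A^1$-bundle over a smooth affine irreducible curve, hence $Y$ is a $K(G,1)$-space and in particular has finite homotopy rank-sum (indeed all higher rank sums vanish). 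The subcase where $X$ is contractible (affine space or more general contractible $\A^1$-bundle) is subsumed: Theorem~\ref{Thm: Descent of A^1 bundle over affine curve} still applies whenever $X$ is a Zariski-locally trivial $\A^1$-bundle over an affine curve, so $Y$ is again such a bundle and a $K(G,1)$.

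For the remaining two cases, note a $\Q$-homology plane with $\pi_1\cong\Z/2\Z$ has a double étale cover which is a $\Q$-homology plane, hence simply connected, hence (being a Stein surface of log Kodaira dimension $-\infty$) contractible; more to the point, the homotopy rank-sum of such a surface is finite, and in fact its double cover being contractible shows its higher homotopy is that of a finite group acting — here the key is that $X$ is homotopy-equivalent to a space with finite homotopy, which in the $\Q$-homology plane case means $X$ has the rational homotopy of a point, so it has finite (indeed zero) rank-sum. In either of these last two cases $X$ is rationally elliptic in the strong sense that its universal cover is contractible or homotopy $S^2$, and I would appeal to Theorem~\ref{Thm: proper descent of homotopy sphere}: when $X$ is homotopic to $S^2$, $Y$ is a contractible surface, a surface homotopic to $S^2$, or a $\Q$-homology plane with $\pi_1\cong\Z/2\Z$ — each of which has finite homotopy rank-sum (the first two obviously, the third because its higher homotopy groups are those of its contractible or $S^2$-homotopic double cover, which have finite rank-sum). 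For the $\Q$-homology plane case of $X$, I would pass to the double étale cover $X'\to X$ with $X'$ contractible (or a $\Q$-homology plane that is simply connected, hence contractible by Andreotti--Frankel + Hurewicz), pull back the cover along $h$ to reduce to the $K(G,1)$ or homotopy-$S^2$ situation already handled.

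The main obstacle I anticipate is not the case analysis itself but ensuring that "finite homotopy rank-sum" is genuinely inherited through each reduction: a finite covering $g\colon Z\to Y$ preserves $\pi_i$ for $i\ge 2$ exactly (covering space theory), so the rank-sums of $Z$ and $Y$ agree for $i\ge 2$ — that direction is clean. The delicate point is the opposite reduction, where I pass from $X$ to a finite étale cover $X'$ (to kill $\pi_1$-torsion or eliminate multiple fibers via the ramified covering trick of Lemma~\ref{Ramified covering trick}): there $\pi_i(X')\cong\pi_i(X)$ for $i\ge 2$ again, so finiteness of the rank-sum is preserved, but I must check that the surface $X'$ still has $\Kbar=-\infty$ (Lemma~\ref{Lem: Finite morphism basic properties}(1), since étale covers preserve $\Kbar$) and still falls under Theorem~\ref{Thm: Classification of affine surfaces with finite homotopy rank-sum}, so that the structural input remains available. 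Once the bookkeeping of which homotopy-theoretic invariants transfer in which direction is set up carefully, the theorem follows by combining Theorem~\ref{Thm: Descent of A^1 bundle over affine curve} and Theorem~\ref{Thm: proper descent of homotopy sphere} across the three cases of the classification.
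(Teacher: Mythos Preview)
Your case analysis has a genuine gap. When $X$ is a $K(\pi,1)$ with $\Kbar(X)=-\infty$, Theorem~\ref{Thm: Classification of EM surfaces}(1) says only that $X$ is a Zariski-locally trivial $\A^1$-bundle over a smooth algebraic curve not isomorphic to $\A^1$ or $\PP^1$; it does \emph{not} say that the base curve is affine. An $\A^1$-bundle over a smooth projective curve $C$ of positive genus can perfectly well be affine (e.g.\ the complement of a section in a ruled surface over $C$), and such an $X$ is still a $K(\pi_1(C),1)$. You assert that the base is affine and then invoke Theorem~\ref{Thm: Descent of A^1 bundle over affine curve}, but that theorem is stated only for affine base curves and its proof genuinely uses $b_2(X)=0$, which fails when the base is projective (then $b_2(X)=1$).

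This missing case is not a minor bookkeeping issue: the paper devotes a separate proposition to it, running through three cases on the target base curve $D$ of an $\A^1$-fibration on $Y$ (namely $D\not\cong\A^1,\PP^1$; $D\cong\PP^1$; $D\cong\A^1$), using Suzuki's formula, the ramified covering trick, and Proposition~\ref{Prop: Etale cover of A^1-bundle preserves the bundle structure with the nature of base curve} to control multiple and reducible fibers. The outcome is that $Y$ can be a $K(G,1)$ or homotopic to $S^2$, and the argument is substantially more delicate than the affine-base case. A secondary imprecision: the universal double cover of a $\Q$-homology plane with $\pi_1\cong\Z/2\Z$ is homotopic to $S^2$ (a Moore space $M(\Z,2)$), not contractible as you first write; the paper uses this to feed that case directly into Theorem~\ref{Thm: proper descent of homotopy sphere}.
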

\begin{proof}
    First note that, since $Y$ is properly dominated by $X$ and $\Kbar(X)=-\infty$, thus $\Kbar(Y)=-\infty$ using Lemma \ref{Lem: Finite morphism basic properties}(3). Since $X$ satisfies the finite homotopy rank-sum property and $\Kbar(X)=-\infty$, it follows from Theorem \ref{Thm: Classification of affine surfaces with finite homotopy rank-sum} and Theorem \ref{Thm: Classification of EM surfaces} that $X$ is isomorphic to one of the following surfaces:
    \begin{enumerate}[\indent\rm(1)]
        \item a Zariski--locally trivial $\A^1$-bundle over a smooth algebraic curve not isomorphic to $\PP^1$;
        \item homotopic to $S^2$;
        \item a $\Q$-homology plane with fundamental group $\Z/2\Z$.
    \end{enumerate}
Note that the universal cover of a $\Q$-homology plane with fundamental group $\Z/2\Z$ is affine and topologically a Moore $M(\Z,2)$-space, i.e., homotopic to $S^2$. Therefore, without loss of generality, we can assume that $X$ is either a Zariski--locally trivial $\A^1$-bundle over a smooth algebraic curve not isomorphic to $\PP^1$ or is homotopic to $S^2$. We first consider the easy cases, which can be settled by directly applying our previous results.

\begin{enumerate}[\indent---]
    \item If $X$ is a Zariski--locally trivial $\A^1$-bundle over a smooth affine curve, then Theorem \ref{Thm: Descent of A^1 bundle over affine curve} implies that $Y$ is a $K(G,1)$-space. In particular, in this case, $Y$ satisfies the finite homotopy rank-sum property.
    
    \item If $X$ is homotopic to $S^2$, then using Theorem \ref{Thm: proper descent of homotopy sphere}, it turns out that $Y$ enjoys the finite homotopy rank-sum property.
\end{enumerate}

Now the following proposition completes the proof.
\end{proof}

\begin{prop}
    With the notations as in Theorem \ref{Main Theorem - kappa bar negative case}, suppose there exists a Zariski--locally trivial $\A^1$-bundle structure $\varphi: X\to C$ over a smooth projective curve $C$ with positive genus. Then $Y$ satisfies the finite homotopy rank-sum property.
\end{prop}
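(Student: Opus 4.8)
The plan is to study the $\A^1$-fibration that $Y$ is forced to carry, constrain its base to a very short list, and then read off the homotopy type of $Y$. Throughout I use $\varphi\colon X\to C$ for the given bundle projection.

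As in the previous proofs I would start with Remark~\ref{Rem: pi_1 level map is surjective} to reduce to the case where $f_\ast\colon\pi_1(X)\to\pi_1(Y)$ is onto; note $\Kbar(Y)=-\infty$ by Lemma~\ref{Lem: Finite morphism basic properties}(1). The $\A^1$-fibres of $\varphi$ are contractible, so Lemma~\ref{Lem: Homotopy equivalence if the fiber of a bundle is contractible} gives $X\simeq C$; hence $X$ is aspherical with $b_1(X)=2g$ ($g=\mathrm{genus}(C)\ge1$), $b_2(X)=1$, and---since $C$ is complete---every unit on $X$ is constant on fibres and descends to a constant on $C$, so $\gamma(X)=0$. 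By Proposition~\ref{Prop: Etale cover of A^1-bundle preserves the bundle structure with the nature of base curve} every connected \'etale cover of $X$ is again an $\A^1$-bundle over a complete curve of positive genus, hence still has $b_2=1$ and $\gamma=0$; and as $f_\ast$ is onto, every connected \'etale cover $Y'\to Y$ pulls back along $f$---connected by Lemma~\ref{Lem: Pull-back of connected covering}---to such a cover $X'\to X$ with a finite surjection $X'\to Y'$, so $b_2(Y')\le1$, $\gamma(Y')=0$ and $\Kbar(Y')=-\infty$. The persistence of these numerical constraints on \emph{all} covers will be the driving force of the argument.

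Since $\Kbar(Y)=-\infty$, $Y$ carries an $\A^1$-fibration $\psi\colon Y\to B$, and $\psi_\ast\colon\pi_1(Y)\to\pi_1(B)$ is onto (Lemma~\ref{Lem: surjective induced map at the level of pi_1}(2)), so $b_1(Y)\ge b_1(B)$. I would first pin down $B$: it cannot be affine with a rational completion and a puncture (that would give non-constant units on $Y$, against $\gamma(Y)=0$), and it cannot be affine of positive genus either---for then a morphism $\A^1\to B$ would extend over $\PP^1$ and be constant, so $\psi\circ f$ would factor through $\varphi$ as $\alpha\circ\varphi$ with $\alpha\colon C\to B$, and a morphism from the complete curve $C$ to the affine curve $B$ is constant, contradicting surjectivity of $\psi\circ f$. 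Hence $B\cong\A^1$ or $B$ is complete. Feeding this into Suzuki's formula (Theorem~\ref{Suzuki's formula}) together with $b_2(Y)\le1$, $b_1(Y)\le2g$ and $b_1(Y)\ge b_1(B)$ yields: if $B$ is complete then $b_2(Y)=1$, $b_1(Y)=b_1(B)$ and $\psi$ has no reducible fibre; if $B\cong\A^1$ then $0\le b_1(Y)\le b_2(Y)\le1$.

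If $B$ is complete of positive genus, I would eliminate any multiple fibre of $\psi$ via the ramified covering trick (Theorem~\ref{Ramified covering trick}; no $\PP^1$-exception, as $\mathrm{genus}(B)\ge1$), obtaining $Y'\to Y$ \'etale with $Y'\to B'$ an $\A^1$-fibration over a complete curve $B'$ of positive genus and with no multiple fibre; since $Y'$ is again properly dominated by an $\A^1$-bundle over a positive-genus complete curve, the Suzuki computation applied to $Y'\to B'$ forces no reducible fibre as well, so $Y'\to B'$ is a Zariski-locally trivial $\A^1$-bundle, $Y'\simeq B'$ is aspherical, and therefore $Y$ (covered by $Y'$) is aspherical. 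If instead $B$ is rational ($\PP^1$ or $\A^1$), then by Lemma~\ref{Lem: Gang's Generalization} the group $\pi_1(Y)$ is an orbifold fundamental group---$\Gamma$ of $(\PP^1;m_1,\dots,m_r)$, or the free product $\ast_i\Z/m_i$ when $B\cong\A^1$. When this group is infinite, eliminating the multiplicities by the ramified covering trick and using either Riemann--Hurwitz (if $B\cong\PP^1$, where the new base stays complete of genus $\ge1$, reducing to the previous case) or the constraint $\gamma\equiv0$ together with ``complete maps to affine are constant'' (if $B\cong\A^1$, which forces the new base to be $\A^1$) leads to a contradiction: in the first sub-case $Y$ would be aspherical with a torsion-containing $\pi_1$, in the second the universal cover of $Y$ would be a \emph{finite} cover while $\pi_1(Y)$ is infinite. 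Hence the group is finite, and then the inequality $b_2(\widetilde Y)\le b_2(\widetilde X)=1$---where $b_2(\widetilde Y)=|\pi_1(Y)|\cdot e(Y)-1$ by Andreotti--Frankel vanishing on the Stein surface $\widetilde Y$---leaves only $\pi_1(Y)=1$ (so, by Suzuki, $Y$ is contractible or $Y\simeq S^2$) or $\pi_1(Y)\cong\Z/2\Z$ with $b_1(Y)=b_2(Y)=0$ (so $Y$ is a $\Q$-homology plane with fundamental group $\Z/2\Z$). In every case $Y$ is aspherical, homotopic to $S^2$, or a $\Q$-homology plane with $\pi_1(Y)\cong\Z/2\Z$; each of these has finite homotopy rank-sum, which is the assertion. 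The main obstacle is the bookkeeping in the rational-base case: one must exclude $\A^1$-fibrations with several multiple fibres, since such a configuration would make $\pi_1$ (of $Y$ or of a cover) an infinite group carrying torsion, whose universal cover is an infinite bouquet of $2$-spheres and thus has infinite homotopy rank-sum---and it is precisely the combination of the ramified covering trick with the persistence of $b_2\le1$ and $\gamma\equiv0$ on all covers that rules this out.
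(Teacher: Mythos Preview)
Your argument is correct, and the overall shape is sound: narrow the base $B$ of the $\A^1$-fibration on $Y$ to $\A^1$, $\PP^1$, or a complete curve of positive genus, then dispose of each case using Suzuki plus the ramified covering trick, together with the persistence of $b_2\le 1$ (and $\gamma=0$) on every connected \'etale cover of $Y$.  The final step, bounding $b_2(\widetilde Y)\le b_2(\widetilde X)=1$ and reading off $|\pi_1(Y)|\cdot e(Y)\le 2$, is a clean way to land on the three possible homotopy types in the rational-base case.

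The paper organizes the proof differently.  For $D\not\cong\A^1,\PP^1$ it does \emph{not} rule out such bases outright; instead it exploits the same ``$\A^1$ maps nonconstantly only to $\A^1$ or $\PP^1$'' observation to see that $f$ respects fibres, then \emph{deletes a single regular fibre} of $\psi$ and its preimage under $f$, thereby manufacturing a finite surjection from an $\A^1$-bundle over a smooth \emph{affine} curve onto $Y$ minus a fibre, and invokes Theorem~\ref{Thm: Descent of A^1 bundle over affine curve} to conclude that $\psi$ is already Zariski--locally trivial (hence $Y$ is $K(G,1)$).  For $D\cong\PP^1$ or $\A^1$ the paper proceeds by a direct case split on $b_2(Y)\in\{0,1\}$ and on the pair $(s,b_1(Y))$, each time running the ramified covering trick and deriving a numerical contradiction from $e(Y')=d\cdot e(Y)$.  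So the paper leans on the earlier affine-base theorem and on more granular Euler-characteristic bookkeeping, whereas you keep the argument self-contained: you never call Theorem~\ref{Thm: Descent of A^1 bundle over affine curve}, you use the unit constraint $\gamma\equiv 0$ (which the paper does not), and you replace several subcases by the single inequality $b_2(\widetilde Y)\le 1$ on the universal cover.  Both routes reach the same list of outcomes; yours is somewhat more conceptual at the end, the paper's is more incremental but recycles a result already in hand.
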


\begin{proof}
    Using Remark~\ref{Rem: pi_1 level map is surjective}, we can assume, without a loss of generality, that the induced homomorphism $f_\ast : \pi_1(X) \to \pi_1(Y)$ is surjective. Since $X$ is an $\A^1$-bundle over a smooth projective curve, we have $b_2(X)=1$ by the proof of Corollary~\ref{Cor: b_2 of an A^1-bundle}. Using Lemma~\ref{Lem: Finite morphism basic properties}(2), we deduce $b_2(Y)$ can be either $0$ or $1$. As $\Kbar(Y) = -\infty$, thus $Y$ admits an $\A^1$-fibration $\psi: Y \to D$ over a smooth algebraic curve $D$. Thus the induced homomorphism $\psi_\ast : \pi_1(Y) \to \pi_1(D)$ is surjective, yielding $b_1(Y) \ge b_1(D)$. Apply Suzuki’s formula (cf. Theorem~\ref{Suzuki's formula}) to $\varphi$:
    \[
    e(Y) = e(D) + s,\qquad 
    s := \sum_{i=1}^{r}\bigl(e(F_i)-e(\A^1)\bigr),
    \]
    where $\{F_i: 1\leq i\leq r\}$ are the singular fibers of $\psi$. This gives:
    \begin{equation}\label{eqn1}
        b_1(D) - b_1(Y) = s + b_2(D) - b_2(Y).
    \end{equation}

    \medskip
    \noindent{\bf Case 1.} \emph{Suppose $D$ is not isomorphic to $\A^1$ or $\PP^1$.}

    \medskip
    Since $\A^1$ cannot dominate any smooth algebraic curve except $\A^1$ and $\PP^1$, no fiber of $\varphi$ is mapped via $f$ horizontally with respect to the fibration $\psi$. Thus, $f$ is defined fiberwise. Take any regular fiber, say $F$ of $\psi$ over $q\in D$. Then, $f^{-1}(F)$ is non-empty as $f$ is surjective and is a disjoint union of fibers of $\varphi$ over $S:=\{p_1,\ldots,p_k\}\subset C$. Thus $f$ restricts to a finite surjective morphism $X\setminus f^{-1}(F) \to Y\setminus F$ of smooth affine surfaces such that $\varphi$ and $\psi$ restrict to $\A^1$-bundle over $C\setminus S$ and to an $\A^1$-fibration over $D\setminus \{q\}$ respectively. Note that, since $S\neq \emptyset$, thus $C\setminus S$ is a smooth affine curve and $D\setminus \{q\}$ is affine too. Now, Theorem \ref{Thm: Descent of A^1 bundle over affine curve} implies that $\psi$ restricts to a Zariski--locally trivial $\A^1$-bundle over $D\setminus \{q\}$. Hence, $\psi$ is also Zariski--locally trivial. Hence, $Y$ is a $K(G,1)$-space in this case.

    \medskip
    \noindent{\bf Case 2.} \emph{Suppose $D\cong \PP^1$.}

    \medskip
    \noindent
    Now, if $b_2(Y)=0$ then $e(Y)=1-b_1(Y)\leq 1$. But on the other hand, Suzuki's formula (cf. Theorem \ref{Suzuki's formula}) yields that $e(Y)\geq e(\PP^1)=2$, a contradiction. Thus, $b_2(Y)$ must be $1$ and hence $e(Y)=2-b_1(Y)$. Note that, equation \eqref{eqn1} implies that $s+b_1(Y)=0$ in this case which in turn implies that $b_1(Y)=0=s$, i.e., $e(Y)=2$ and $\psi$ has no reducible fiber. Hence, every singular fiber, if there is any, is scheme-theoretically isomorphic to $m\A^1$ for some integer $m>1$.  
    
    \medskip
    If $\psi$ has no singular fiber, then it is an $\A^1$-bundle over $\PP^1$, i.e., $Y$ is homotopic to $\PP^1$ using Lemma \ref{Lem: Homotopy equivalence if the fiber of a bundle is contractible} and $\PP^1$ is further homeomorphic to $S^2$.

    \medskip
    Suppose $\psi$ has either at least three multiple fibers, or exactly two multiple fibres with multiplicities $m_1, m_2$ such that $\gcd(m_1,m_2)>1$. We obtain a non-trivial finite {\'e}tale cover $Y'\to Y$ of degree $d>1$ (using the ramified covering trick in Theorem \ref{Ramified covering trick}). In the ramified covering process, we get an $\A^1$-fibration $\psi':Y'\to D'$ over a smooth projective curve. By pulling back the finite {\'e}tale cover $Y'\to Y$ we construct a finite {\'e}tale cover $X':=X\times_{Y}Y' \to X$ with a natural finite surjective morphism $X' \to Y'$. Note that, since $f_\ast$ is surjective, $X'$ is an irreducible smooth affine surface. By Proposition \ref{Prop: Etale cover of A^1-bundle preserves the bundle structure with the nature of base curve}, $X'$ is again an $\A^1$-bundle over a smooth projective curve with positive genus---a surface that is structurally similar to $X$. Therefore, if $D'$ is not isomorphic to $\PP^1$ then by Case 1 above, it follows that $\psi'$ is a Zariski--locally trivial $\A^1$-bundle over $D'$ and this would imply that $e(Y')=e(D')=2 - 2g(D')\leq 0$. On the other hand, $e(Y') = d \cdot e(Y) = 2d$. However, this is a contradiction as $d>1$. Hence $D'\cong \PP^1$ and thus by what we have proved above, we get $b_1(Y')=0$ and $b_2(Y')=1$ yielding $e(Y')=2$. However, once again, this is a contradiction as $e(Y')=2d$ and $d>1$.

    \medskip
    This concludes that $\varphi$ has at most two multiple fibres and, in the case of exactly two multiple fibers isomorphic to $m_1\A^1, m_2\A^1$, the two multiplicities $m_1,m_2$ must be coprime. In all of these cases, $Y$ turn out to be simply-connected using Lemma \ref{Lem: Gang's Generalization}. Furthemore, since $b_2(Y)=1$, we obtain $Y$ is homotopy equivalent to $S^2$, as observed earlier.

    \medskip
    \noindent{\bf Case 3.} \emph{Suppose $D\cong \A^1$.}

    \medskip
    We proceed with the proof, splitting into two subcases---$b_2(Y)$ being either $0$ or $1$.
    
    \medskip
    {\bf Subcase 3a.} \emph{Suppose $b_2(Y)=0$}

    \medskip
    Then equation \eqref{eqn1} again implies that $s+b_1(Y)=0$, whence $b_1(Y)=0=s$. Thus, $e(Y)=1$ and $\psi$ has no reducible fiber. Hence, every singular fiber, if there is any, is scheme-theoretically isomorphic to $m\A^1$ for some integer $m>1$. Note that $Y$ is a $\Q$-homology plane in this case. 

    \medskip
    Suppose $\psi$ has a multiple fiber. Then we obtain a non-trivial finite {\'e}tale cover $Y'\to Y$ of degree $d>1$ (using the ramified covering trick in Theorem \ref{Ramified covering trick}). In the ramified covering process, we get an $\A^1$-fibration $\psi':Y'\to D'$ over a smooth affine curve. By pulling back the finite {\'e}tale cover $Y'\to Y$ we construct a finite {\'e}tale cover $X':=X\times_{Y}Y' \to X$ with a natural finite surjective morphism $X' \to Y'$. Note that, since $f_\ast$ is surjective, $X'$ is an irreducible smooth affine surface. By Proposition \ref{Prop: Etale cover of A^1-bundle preserves the bundle structure with the nature of base curve}, $X'$ is again an $\A^1$-bundle over a smooth projective curve with positive genus---a surface that is structurally similar to $X$. Therefore, if $D'$ is not isomorphic to $\A^1$ then by {\sf Case 1} above, it follows that $\psi'$ is a Zariski--locally trivial $\A^1$-bundle over $D'$ and this would imply that $e(Y')=e(D')\leq 0$. On the other hand, $e(Y') = d \cdot e(Y) = d$. However, this is a contradiction as $d>1$. Hence $D'\cong \A^1$. Hence \cite[Remark 3.13, 3.14]{GGH2023} following Nori's lemma (cf. Theorem \ref{Nori's Lemma}) implies that $Y'$ is simply connected. Moreover, if $b_2(Y')=0$ then $e(Y')=1$, a contradiction as we noticed before that $e(Y')=d$ and $d>1$. This concludes that $\psi$ is a Zariski--locally trivial $\A^1$-bundle over $\A^1$, i.e., $Y\cong \A^2$.
    
    \medskip
    {\bf Subcase 3b.} \emph{Suppose $b_2(Y)=1$}

    \medskip
    Then equation \eqref{eqn1} implies that $s+b_1(Y)=1$, whence the tuple $(s, b_1(Y))$ takes values either $(0,1)$ or $(1,0)$. 
    
    \begin{enumerate}[\indent---]
        \item If $(s, b_1(Y))=(0,1)$, then $e(Y)=1$ in this case. Clearly, $\psi$ must have a singular fiber which is scheme-theoretically isomorphic to $m\A^1$ for an integer $m>1$, as $s=0$. For, if not, then $s=0$ would once again imply that $\psi$ is Zariski-locally trivial, whence $Y\cong \A^2$, a contradiction to the hypothesis that $b_1(Y)=1$. Hence, apply a similar argument as in {\sf Subcase 3a} involving the ramified covering trick applied to $\psi$, keeping the notations as in {\sf Subcase 3a}. Using Lemma \ref{Lem: Finite morphism basic properties}(2), we get $$1=b_2(X')\geq b_2(Y')\geq b_2(Y)=1,$$ whence $b_2(Y')=1$. If $D'\not\cong\A^1$, then by {\sf Case 1} above, it follows that $\psi': Y'\to D'$ is a Zariski--locally trivial $\A^1$-bundle and this would further imply that $b_2(Y')=0$, a contradiction. Hence $D'\cong \A^1$ and thus \cite[Remark 3.13, 3.14]{GGH2023} following Nori's lemma (cf. Theorem \ref{Nori's Lemma}) implies that $Y'$ is simply connected. But Lemma \ref{Lem: Finite morphism basic properties}(2) implies that $b_1(Y')\geq b_1(Y)$, a contradicion as $Y'$ is simply connected and $b_1(Y)=1$. Hence $(s, b_1(Y))=(0,1)$ never happens in this case.

        \medskip
        \item If $(s, b_1(Y))=(1,0)$, then $e(Y)=2$ in this case. Assume that $\psi$ has multiple fibers. Again, apply a similar argument as in {\sf Subcase 3a} involving the ramified covering trick applied to $\psi$, keeping the notations as in {\sf Subcase 3a}. As observed above, $b_2(Y')=1$ and thus $D'\cong \A^1$. Thus $Y'$ is simply connected as we noticed above, and hence $Y'$ is homotopic to $S^2$. Since $e(Y)=e(Y')=2$ with $e(Y')=d\cdot e(Y)$ and $d>1$, this leads to a contradiction. Hence $\psi$ has no multiple fiber. Therefore \cite[Remark 3.13, 3.14]{GGH2023} following Nori's lemma (cf. Theorem \ref{Nori's Lemma}) implies that $Y$ is simply connected. Hence, as shown previously, $Y$ is homotopic to $S^2$. 
    \end{enumerate}
    This completes the proof of the proposition.
\end{proof}

\subsection{On the descent of finite homotopy rank-sum if $\Kbar(X)=0$.}\mbox{}\\
Again, in this case, we start with the finite {\'e}tale descent case. In fact, the following more general statement is true.

\begin{prop}
    Let $S:=\{p_1,\ldots, p_k\}$ $(k\geq 0)$ be a set of $k$-poits in $\C^*\times \C^*$, where $S=\emptyset$ whenever $k=0$. Let $V$ be a smooth open algebraic surface with $\varphi: V\to \C^*\times \C^*\setminus S$ is a finite {\'e}tale covering. Then there exists a finite {\'e}tale covering $\Phi: \C^*\times \C^*\to \C^*\times \C^*$ such that $V\cong \C^*\times \C^*\setminus \Phi^{-1}(S)$.\\
    In particular, if $k=0$, then $V\cong \C^*\times \C^*$. 
\end{prop}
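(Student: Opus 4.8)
\emph{Proof strategy.} The plan is to reduce everything to the punctureless case $k=0$, which is simultaneously the crux of the argument and the stated ``in particular''. I would first observe that we may assume $V$ connected: this is automatic in all our applications (by Lemma~\ref{Lem: Pull-back of connected covering}), and in any case the conclusion as stated forces it, since $\C^*\times\C^*\setminus\Phi^{-1}(S)$ is irreducible. For $k=0$, the torus $T:=\C^*\times\C^*$ is homotopy equivalent to the real $2$-torus, so $\pi_1(T)\cong\Z^2$; by the Riemann Existence Theorem (the equivalence between finite \'etale covers of $T$ and finite covering spaces of $T^{\mathrm{an}}$), the connected finite \'etale cover $\varphi\colon V\to T$ is determined by its monodromy, a finite-index subgroup $H\le\Z^2$. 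I would then exhibit an explicit algebraic cover realizing $H$: by Smith normal form choose a basis $u_1,u_2$ of $\Z^2$ and positive integers $d_1\mid d_2$ with $H=d_1\Z u_1\oplus d_2\Z u_2$, and take the monomial isogeny of $T$ onto itself which in the multiplicative coordinates dual to $u_1,u_2$ is $(x,y)\mapsto(x^{d_1},y^{d_2})$; it is finite \'etale, its source is again $T$, and its monodromy subgroup is exactly $H$. By the Galois correspondence $\varphi$ is isomorphic to this cover, so $V\cong\C^*\times\C^*$. (Equivalently, the analytic universal cover $\exp\times\exp\colon\C^2\to T$ has deck lattice $\cong\Z^2$, and $\C^2/H$ is again $(\C^*)^2$ because a $\Z$-basis of a finite-index sublattice $H$ is also a $\C$-basis of $\C^2$.)

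For general $k$, I would use that $S\subset T$ is finite and $T$ is smooth, so $\codim_T S=2$; hence by the last assertion of Lemma~\ref{Lem: surjective induced map at the level of pi_1} the inclusion induces an isomorphism $\pi_1(T\setminus S)\xrightarrow{\ \sim\ }\pi_1(T)\cong\Z^2$. Let $H\le\pi_1(T\setminus S)$ be the finite-index subgroup corresponding to $\varphi\colon V\to T\setminus S$; transporting $H$ along this isomorphism, let $\Phi\colon W\to T$ be the connected finite \'etale cover of $T$ with that monodromy. By the $k=0$ case $W\cong\C^*\times\C^*$, so $\Phi$ is (after this identification) the claimed finite \'etale covering of $\C^*\times\C^*$, and it remains only to identify $V$ with $W\setminus\Phi^{-1}(S)$.

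To finish, I would note that $\Phi^{-1}(S)$ is finite, hence of codimension $2$ in the smooth surface $W$, so Lemma~\ref{Lem: surjective induced map at the level of pi_1} again gives $\pi_1(W\setminus\Phi^{-1}(S))\xrightarrow{\ \sim\ }\pi_1(W)$. Restricting $\Phi$ produces a connected finite \'etale cover $\Phi'\colon W\setminus\Phi^{-1}(S)\to T\setminus S$, and chasing the commutative square formed by $\Phi'$, $\Phi$, and the two codimension-$2$ isomorphisms (using that $\Phi_\ast$ is injective with image $H$) shows that the monodromy subgroup of $\Phi'$ inside $\pi_1(T\setminus S)$ is again exactly $H$. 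Hence $\Phi'$ and $\varphi$ are isomorphic as covers of $T\setminus S$, so $V\cong W\setminus\Phi^{-1}(S)\cong\C^*\times\C^*\setminus\Phi^{-1}(S)$; the final sentence of the statement is the already-treated case $k=0$.

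I expect the only genuine subtlety to be categorical bookkeeping rather than geometry: one must pass carefully between the algebraic setting (where $V$ and all the covers are given) and the topological/analytic setting (where the classification of covers by $\pi_1$ is transparent) via the Riemann Existence Theorem, and one must check that the isomorphisms furnished by the Galois correspondence are isomorphisms of algebraic varieties — which they are, since two finite \'etale covers of a fixed variety with isomorphic monodromy are isomorphic as varieties. Everything else (the Smith normal form realization of the cover, the diagram chase, and the two codimension-$2$ statements) is routine once Lemma~\ref{Lem: surjective induced map at the level of pi_1} is in hand.
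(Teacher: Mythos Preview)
Your argument is correct and is precisely the standard one: classify finite \'etale covers of $T=\C^*\times\C^*$ by finite-index subgroups of $\pi_1(T)\cong\Z^2$, realize each such subgroup by a monomial isogeny (Smith normal form), and reduce the punctured case to the unpunctured one via the codimension-$2$ isomorphism of Lemma~\ref{Lem: surjective induced map at the level of pi_1}. The paper does not give an independent proof here---it simply refers to the argument of \cite[Lemma~3.3]{Fur1989}, which treats the analogous statement for $\C\times\C^*$ by exactly this method---so your proposal is in effect a complete writeup of what the paper leaves as a citation.
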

\begin{proof}
   The proof follows using the same idea of the proof of \cite[Lemma 3.3]{Fur1989}.  
\end{proof}

We first prove the following result, which will be very useful for our later discussion.

\begin{prop}\label{Prop: Affine surface properly dominated by 2-torus and with finite pi_1}
    Let $Y$ be a smooth affine surface properly dominated by $\C^*\times \C^*$ with $\Kbar(Y)=0$. If $\pi_1(Y)$ is finite, then $Y$ is one of the following smooth affine surfaces:
    \begin{enumerate}[\indent\rm(1)]
        \item homotopic to $S^2$;
        \item a $\Q$-homology plane with $\pi_1(Y)=\Z/2\Z$.
    \end{enumerate}
\end{prop}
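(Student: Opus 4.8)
The plan is to reduce to a situation where the classification results already established in the paper (Theorem~\ref{Thm: Classification of affine surfaces with finite homotopy rank-sum}, Theorem~\ref{Thm: Classification of EM surfaces}, and Theorem~\ref{Thm: proper descent of homotopy sphere}) can be applied directly. First I would invoke Remark~\ref{Rem: pi_1 level map is surjective} to assume that the given finite surjective morphism $f\colon \C^*\times\C^* \to Y$ induces a surjection $f_\ast\colon \pi_1(\C^*\times\C^*)\to \pi_1(Y)$; since $\pi_1(\C^*\times\C^*)\cong \Z\oplus\Z$ is abelian, this forces $\pi_1(Y)$ to be a finite abelian group, being a quotient of $\Z\oplus\Z$. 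Next, by Lemma~\ref{Lem: Finite morphism basic properties}(2) applied to $f$, we get $b_i(Y)\le b_i(\C^*\times\C^*)$ for all $i$; since $\C^*\times\C^*$ has $b_1=2$, $b_2=1$ and vanishing higher Betti numbers, we obtain $b_2(Y)\le 1$ and $b_i(Y)=0$ for $i\ge 3$. Moreover, since $\pi_1(Y)$ is finite, the first Betti number $b_1(Y)=0$, so $Y$ is either a $\Q$-homology plane (if $b_2(Y)=0$) or a surface with $b_2(Y)=1$ and $b_1(Y)=0$.

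The key observation is that $\C^*\times\C^*$ satisfies the finite homotopy rank-sum property (indeed it is a $K(\Z^2,1)$-space, so all its higher homotopy groups vanish). Therefore, by Theorem~\ref{thm:mainC} (or rather the $\Kbar=0$ portion, Theorem~\ref{Main Theorem - kappa bar = 0 case}), which we are permitted to assume, $Y$ also satisfies the finite homotopy rank-sum property. Since $\Kbar(Y)=0$ and $Y$ satisfies this property, Theorem~\ref{Thm: Classification of affine surfaces with finite homotopy rank-sum} tells us that $Y$ is one of: an Eilenberg--MacLane $K(\pi,1)$-space; a smooth $\Q$-homology plane with $\pi_1(Y)\cong \Z/2\Z$; or a surface homotopic to $S^2$. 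But $\pi_1(Y)$ is finite by hypothesis, so if $Y$ were a $K(\pi,1)$-space then by Lemma~\ref{Lem: Well-known about EM}(2) the group $\pi=\pi_1(Y)$ would be torsion-free and finite, hence trivial, forcing $Y$ to be contractible by Lemma~\ref{Lem: Well-known about EM}(1); but a contractible $K(1,1)$ affine surface has $\Kbar=-\infty$ (it would be isomorphic to $\A^2$ by the structure theory, or at any rate cannot have $\Kbar=0$), contradicting $\Kbar(Y)=0$. Thus the $K(\pi,1)$ case is excluded, leaving exactly the two alternatives in the statement.

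A subtle point I would need to address carefully is the exclusion of the $K(\pi,1)$ case: one must verify that no smooth affine $K(G,1)$-surface with finite $G$ and $\Kbar=0$ exists. This follows cleanly from Theorem~\ref{Thm: Classification of EM surfaces}(2), which lists the only non-contractible smooth affine $K(G,1)$-surfaces with $\Kbar=0$ as $\C^*\times\C^*$ and Fujita's $H[-1,0,-1]$ --- both of which have infinite fundamental group --- together with the fact that a contractible affine surface cannot have $\Kbar=0$ (a contractible smooth affine surface is a $\Q$-homology plane with trivial fundamental group, and by the classification of such surfaces, see the references therein, these have $\Kbar\ne 0$; alternatively, one cites that $\Kbar=0$ contractible surfaces do not occur). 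The main obstacle, then, is not any hard geometry but rather making sure the logical dependencies are sound: specifically, that we are entitled to apply Theorem~\ref{Main Theorem - kappa bar = 0 case} here without circularity. Since that theorem is proved using the full classification (Theorem~\ref{Thm: Classification of affine surfaces with finite homotopy rank-sum}) rather than the present proposition, there is no circularity, and the argument goes through. Finally, in case~(1), the surface homotopic to $S^2$ has $b_2=1$, $b_1=0$, consistent with our Betti number bounds, and in case~(2) the $\Q$-homology plane with $\pi_1\cong\Z/2\Z$ has $b_2=0$; both are genuinely realized, as witnessed e.g. by quotients related to the descent from $\C^*\times\C^*$. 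This completes the plan.
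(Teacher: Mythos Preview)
Your argument is circular. You invoke Theorem~\ref{Main Theorem - kappa bar = 0 case} to deduce that $Y$ has the finite homotopy rank-sum property, and you claim this is permissible because ``that theorem is proved using the full classification \ldots\ rather than the present proposition.'' But in the paper, Theorem~\ref{Main Theorem - kappa bar = 0 case} is proved by appealing to Theorem~\ref{Thm: Affine surfaces with kappa=0 properly dominated by 2-torus} (in the case $X\cong\C^*\times\C^*$), and the proof of Theorem~\ref{Thm: Affine surfaces with kappa=0 properly dominated by 2-torus} explicitly invokes the present Proposition~\ref{Prop: Affine surface properly dominated by 2-torus and with finite pi_1} in its Case~1. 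So the logical chain is
\[
\text{Prop.~\ref{Prop: Affine surface properly dominated by 2-torus and with finite pi_1}} \;\Longrightarrow\; \text{Thm.~\ref{Thm: Affine surfaces with kappa=0 properly dominated by 2-torus}} \;\Longrightarrow\; \text{Thm.~\ref{Main Theorem - kappa bar = 0 case}},
\]
and you are trying to reverse it. This is the decisive gap; without an independent reason why $Y$ has finite homotopy rank-sum, your argument does not get off the ground.

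The paper's proof avoids this by working directly with the geometry. After the reduction of Remark~\ref{Rem: pi_1 level map is surjective}, one passes to the (finite) universal cover $\wt{Z}$ of the intermediate surface $Z$ and pulls back $f$ to a finite surjection $\C^*\times\C^*\to\wt{Z}$. Since $\C^*\times\C^*$ is factorial, one obtains $\rho(\wt{Z})=0$; combined with simple connectivity and $\Kbar(\wt{Z})=0$, this places $\wt{Z}$ in the class $\mathcal{S}_0$ of \cite{JSXZ2024}, whence $\wt{Z}$ is homotopic to $S^2$. An Euler-characteristic count ($e(\wt{Z})=2$ together with the multiplicativity under the \'etale covers $\wt{Z}\to Z\to Y$) then forces the dichotomy in the statement. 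No appeal to the finite homotopy rank-sum classification is needed. A secondary issue: your exclusion of the contractible $K(1,1)$ case asserts that contractible smooth affine surfaces cannot have $\Kbar=0$; this is true but is itself a nontrivial classification fact, not something one can wave through without a precise reference.
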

\begin{proof}
    Let $X:=\C^* \times \C^*$ properly dominate $Y$ via a finite surjective morphism, say $f: X \to Y$. If $\deg(f)=1$, then $f$ being a surjective birational morphism turns out to be an isomorphism following Zariski's Main Theorem. Then $\pi_1(Y) \cong \pi_1(X) = \Z^2$, a contradiction to the hypothesis that $\pi_1(Y)$ is finite. This yields that $\deg(f)>1$. 
 
    With the notations in Remark~\ref{Rem: pi_1 level map is surjective}, we first factorize $f$ into $g$ and $h$. By construction, the induced homomorphism $h_\ast : \pi_1(X) \to \pi_1(Z)$ is surjective. Since $\pi_1(X)$ is finite, $\pi_1(Z)$ is finite too. Now consider the finite (equivalently, proper) universal covering map $p: \wt{Z} \to Z$ and we have the following commutative diagram:
	$$
	\begin{tikzcd}[row sep=large, column sep=large]
	\wt{X}:=X \times_{Z} \wt{Z} \arrow[r, "h'"] \arrow[d, "q"'] &\wt{Z} \arrow[d, "p"]\\
	X \arrow[r, "h"'] & Z
	\end{tikzcd}
	$$
	such that $\wt{X}$ is smooth as well as irreducible since $f_\ast: \pi_1(X) \to \pi_1(Y)$ is surjective and $q: \wt{X} \to X$ is a finite covering map which is essentially a pull back of the universal covering $p: \wt{Z} \to Z$ via $h: X \to Z$. Hence $h': \wt{X} \to \wt{Z}$ turns out to be a proper morphism with finite fibers, hence a finite surjective morphism. Since $X=\C^* \times \C^*$ and $q$ is a finite covering, $\wt{X}$ is isomorphic to $\C^* \times \C^*$. So $h'$ is again a finite surjective morphism similar to $h$. Since $\wt{X}$ being isomorphic to $\C^*\times \C^*$ is factorial and $h'$ is finite surjective, therefore using \cite{GMM2021}, it follows that $$\rho(\wt{Z})\leq \rho(\wt{X})=0,$$ whence $\rho(\wt{Z})=0$, i.e., $\wt{Z}$ is $\Q$-factorial. Then simply-connectedness of $\wt{Z}$ makes it factorial with $\Gamma(\wt{Z},\mathcal{O}_{\wt{Z}})^*=\C^*$. Also note that, since $p:\wt{Z} \to Z$ and $g:Z\to Y$ are finite {\'e}tale coverings, $$\Kbar(\wt{Z})=\Kbar(Z)=\Kbar(Y)=0.$$ This implies that $\wt{Z}\in \mathcal{S}_0$---the class defined in \cite{JSXZ2024}. Hence $\wt{Z}$ is homotopic to $S^2$ using \cite[Theorem 4.2]{JSXZ2024}. Now $2=e(\wt{Z})=\deg p\cdot e(Z)$ and hence the pair $(\deg p, e(Z))\in \N^2$ takes the value either $(2,1)$ or $(1,2)$. 

    Also note that, since $p:\wt{Z} \to Z$ and $g:Z\to Y$ both are finite {\'e}tale coverings, thus $\pi:=g\circ p:\wt{Z} \to Y$ is a universal covering map of degree $d=\deg p\cdot \deg g$. Now we consider the following two cases.\\
    
    {\bf Case 1.} \emph{Suppose $(\deg p, e(Z))=(2,1)$.}
    
    \medskip
    Since $1=e(Z)=\deg g\cdot e(Y)$, thus $e(Y)=1=\deg g$ and hence $Z=Y$, in this case, with $\pi:\wt{Z}\to Y$ an universal covering of degree $d=2$. Hence $\pi_1(Y)=\Z/2\Z$. Since $e(Y)=1$, it turns out that $H_2(Y;\Q)=(0)$. Also, $Y$ being a smooth affine surface, $H_i(Y;\Q)=(0)$ for all $i>2$ by \cite[Theorem 1]{AF1959}. Hence $Y$ is a $\Q$-homology plane with fundamental group $\Z/2\Z$.

    {\bf Case 2.} \emph{Suppose $(\deg p, e(Z))=(1,2)$.}
    
    \medskip
    Firstly, $\deg p=1$ implies that $\wt{Z}=Z$ and hence $\pi=g: Z \to Y$ is a universal covering. Since $2=e(Z)=\deg g\cdot e(Y)$, thus $(\deg g, e(Y))=(1,2) \text{ or } (2,1)$. 
    \begin{itemize}[\indent---]
        \item If $(\deg g, e(Y))=(1,2)$, then $Y=Z=\wt{Z}$ and hence $Y$ is homotopic to $S^2$ as $\wt{Z}$ was so.\\[-0.25 cm]
        \item If $(\deg g, e(Y))=(2,1)$, then hence, once again by the same argument, it follows that $Y$ is a $\Q$-homology plane with fundamental group $\Z/2\Z$.
    \end{itemize}

    Both cases together rest the proof.
\end{proof}

\begin{remark}
    In Proposition \label{Prop: Affine surface properly dominated by 2-torus and with finite pi_1}, we see that in the case of $Y$ being homotopy equivalent to $S^2$, it turns out that $Y\in \mathcal{S}_0$---the class defined in \cite{JSXZ2024}.
\end{remark}

\begin{prop}\label{Prop: No finite map from 2-torus to surface having infinite cyclic pi_1 and positive Euler characteristic}
	No smooth affine surface having a fundamental group isomorphic to $\Z$ and positive Euler characteristic can ever be properly dominated by $\C^* \times \C^*$.
\end{prop}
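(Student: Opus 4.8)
The plan is to argue topologically, passing to universal covers and transporting $\Q$-acyclicity along a finite analytic morphism. Suppose, to the contrary, that $f\colon X:=\C^{*}\times\C^{*}\to Y$ is a finite surjective morphism with $Y$ a smooth affine surface, $\pi_1(Y)\cong\Z$, and $e(Y)>0$; I will derive a contradiction. First I would use Remark~\ref{Rem: pi_1 level map is surjective} to reduce to the case in which $f_\ast\colon\pi_1(X)\to\pi_1(Y)$ is surjective: passing through the intermediate finite \'etale cover $g\colon Z\to Y$ and replacing $f$ by the finite morphism $h\colon X\to Z$, the surface $Z$ is again smooth affine, $\pi_1(Z)\cong f_\ast\pi_1(X)$ is a finite-index subgroup of $\Z$ and hence still $\cong\Z$, and $e(Z)=\deg(g)\cdot e(Y)>0$, so all hypotheses persist while now $h_\ast$ is onto. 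In particular $\ker f_\ast$ is a rank-one direct summand of $\pi_1(X)\cong\Z^{2}$.

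Next I would form the universal covering $p\colon\wt{Y}\to Y$, which is an infinite cyclic cover since $\pi_1(Y)\cong\Z$, and set $\wt{X}:=X\times_Y\wt{Y}$. The projection $\wt{X}\to X$ is the covering of $X$ corresponding to $\ker f_\ast$; since $\ker f_\ast$ is a rank-one direct summand of $\Z^{2}$, the total space $\wt{X}$ is analytically isomorphic to $\C\times\C^{*}$ (one $\C^{*}$-factor unrolled along $\exp$), so $\wt{X}$ is connected (this also follows from Lemma~\ref{Lem: Pull-back of connected covering}), homotopy equivalent to $S^{1}$, and in particular $H_2(\wt{X};\Q)=0$. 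On the other hand $\wt{X}\to\wt{Y}$, being a base change of the finite surjective morphism $f$, is itself finite and surjective; a transfer argument for this finite holomorphic map — the analytic version of Lemma~\ref{Lem: Finite morphism basic properties}(2) — then shows that $H_2(\wt{X};\Q)\to H_2(\wt{Y};\Q)$ is surjective, whence $H_2(\wt{Y};\Q)=0$.

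Finally I would promote this to contractibility of $\wt{Y}$. As the universal cover of a Stein surface, $\wt{Y}$ is itself a $2$-dimensional Stein manifold, so by the Andreotti--Frankel theorem \cite{AF1959} we have $H_k(\wt{Y};\Z)=0$ for $k>2$ and $H_2(\wt{Y};\Z)$ is free abelian; together with $H_2(\wt{Y};\Q)=0$ this gives $H_2(\wt{Y};\Z)=0$. Since $\wt{Y}$ is moreover simply connected and has the homotopy type of a CW complex, Hurewicz's theorem and Whitehead's theorem show that $\wt{Y}$ is contractible. Then $Y$ is a $K(\Z,1)$-space, hence homotopy equivalent to $S^{1}$, and since $Y$ has the homotopy type of a finite CW complex we get $e(Y)=e(S^{1})=0$, contradicting $e(Y)>0$. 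This contradiction proves the proposition; note that the argument in fact shows that every smooth affine surface properly dominated by $\C^{*}\times\C^{*}$ with fundamental group $\Z$ is homotopy equivalent to $S^{1}$.

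I expect the one genuinely delicate point to be the transfer step for $\wt{X}\to\wt{Y}$: because $\wt{Y}$ is not an algebraic variety one cannot simply invoke Lemma~\ref{Lem: Finite morphism basic properties}(2), and a priori $H_\ast(\wt{Y};\Q)$ need not even be finite-dimensional. I would instead use the transfer homomorphism attached directly to the finite surjective holomorphic map $\wt{X}\to\wt{Y}$ (equivalently, the relation ``pushforward composed with pullback equals the degree'' on rational cohomology, valid for finite branched covers of complex manifolds), from which surjectivity on $H_2(\,\cdot\,;\Q)$ follows by duality. Everything else is routine. A more algebraic proof, splitting into the cases $\Kbar(Y)=-\infty$ and $\Kbar(Y)=0$ and invoking Suzuki's formula together with Theorems~\ref{Thm: Descent of A^1 bundle over affine curve} and~\ref{Thm: proper descent of homotopy sphere}, is also possible but considerably longer.
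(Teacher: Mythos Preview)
Your argument is correct, and the delicate point you flag (the transfer for the analytic finite map $\wt{X}\to\wt{Y}$) can indeed be handled by the sheaf-theoretic trace splitting $\Q_{\wt{Y}}\hookrightarrow f_*\Q_{\wt{X}}$ for a finite surjective holomorphic map, together with K.~Stein's theorem that covering spaces of Stein manifolds are Stein (so that Andreotti--Frankel applies to $\wt{Y}$). Everything else is routine as you say.

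However, the paper's proof is both shorter and stays entirely in the algebraic category, avoiding the analytic subtleties altogether. After the same reduction to $f_\ast$ surjective, instead of passing to the \emph{infinite} universal cover, the paper takes the unique \emph{finite} $2$-fold \'etale cover $Y'\to Y$ (corresponding to $2\Z\subset\Z\cong\pi_1(Y)$). Then $\pi_1(Y')\cong\Z$ still, so $e(Y')=b_2(Y')$; but $e(Y')=2\,e(Y)\ge 2$, hence $b_2(Y')\ge 2$. Pulling back gives $X':=X\times_Y Y'$, again isomorphic to $\C^*\times\C^*$, with a finite surjection $X'\to Y'$; now Lemma~\ref{Lem: Finite morphism basic properties}(2) applies directly (everything is algebraic) and yields $1=b_2(X')\ge b_2(Y')\ge 2$, a contradiction.

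So both arguments rest on the same transfer inequality $b_2(\text{source})\ge b_2(\text{target})$ after passing to a cover; the paper simply observes that a single finite cover already forces $b_2$ of the target above $1$, so there is no need to go to the universal cover, no need for the analytic transfer, and no need for Hurewicz--Whitehead. Your approach does buy something extra --- it shows that any such $Y$ with $\pi_1(Y)\cong\Z$ is actually homotopy equivalent to $S^1$, not merely that $e(Y)\le 0$ --- but for the proposition as stated the paper's $2$-fold cover trick is the cleaner route. Your closing remark about a case split on $\Kbar(Y)$ is not what the paper does.
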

\begin{proof}
    If possible, assume that $X:=\C^* \times \C^*$ properly dominates a smooth affine surface $Y$ with $\pi_1(Y)\cong\Z$ and $e(Y)>0$ via a finite surjective morphism, say $f: X \to Y$.
 
    If $\deg(f)=1$, then $f$ is an isomorphism by Zariski's Main Theorem. Then $$\pi_1(Y) \cong \pi_1(X) = \Z^2,$$ a contradiction to the hypothesis that $\pi_1(Y)\cong \Z$. This yields that there is no such $f$ with $\deg(f)=1$. Choose an integer $d>1$, and assume that there exists no such $f$ of every degree $d' < d$.
    
    Assume that there is a finite surjective morphism $f: X \to Y$ of degree $d$ onto a smooth affine surface $Y$ with $\pi_1(Y)\cong \Z$ and $e(Y)>0$.\\
    
    {\bf Claim.} {\it This is possible only if the induced homomorphism $f_\ast: \pi_1(X) \to \pi_1(Y)$ is surjective.}
    \medskip
    
    {\it Proof of Claim.} --- The reason is as follows. Suppose $f_\ast: \pi_1(X) \to \pi_1(Y)$ is not surjective. 
 
   With the notations in Remark~\ref{Rem: pi_1 level map is surjective}, we first factorize $f$ into $g$ and $h$. By construction, the induced homomorphism $h_\ast : \pi_1(X) \to \pi_1(Z)$ is surjective, and since $f_\ast$ is not surjective, this factorization is non-trivial. This simply means that $\deg g >1$, yielding $\deg h < \deg f$. Note that $\pi_1(Z) \cong \Z$ as $[\pi_1(Y): f_\ast(\pi_1(X))] < \infty$ using Lemma \ref{Lem: surjective induced map at the level of pi_1} and $\pi_1(Y) \cong \Z$. Also, $e(Z)=\deg(g) \cdot e(Y) > 0$. But it follows from our induction hypothesis that there doesn't exist such an $h$, since $\deg h < \deg f = d$. This contradiction yields that if such an $f$ has to exist, $f_\ast: \pi_1(X) \to \pi_1(Y)$ must be surjective; hence the claim.\qed\\
    
    Consider a $2$-fold covering $Y'$ of $Y$. Then $Y'$ is a smooth affine surface and $\pi_1(Y') \cong \Z$. Therefore, $e(Y')=b_2(Y')$. Also, $e(Y')=2e(Y) \geq 2$. Therefore, $b_2(Y') \geq 2$. Since $f_\ast: \pi_1(X) \to \pi_1(Y)$ is surjective, $f$ induces a finite surjective morphism $f' : X' \to Y'$, where $X':=X \times_{Y} Y'$ and the morphism $X' \to X$, being a pullback of the Galois finite covering map $Y' \to Y$, is a finite unramified Galois covering. Therefore, $X' \cong \C^* \times \C^*$ again. But such an $f'$ cannot exist, since $b_2(X') =1 < 2 \leq b_2(Y')$. This completes the proof.
\end{proof}

As an immediate consequence of Proposition \ref{Prop: No finite map from 2-torus to surface having infinite cyclic pi_1 and positive Euler characteristic}, we can have the following result.

\begin{cor}\label{Cor: No finite map from 2-torus to surface having rank 1 abelian pi_1 and positive Euler characteristic}
	No smooth affine surface having an abelian fundamental group of rank $1$ and positive Euler characteristic can ever be properly dominated by $\C^* \times \C^*$.
\end{cor}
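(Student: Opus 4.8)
The plan is to reduce the assertion to Proposition~\ref{Prop: No finite map from 2-torus to surface having infinite cyclic pi_1 and positive Euler characteristic} by passing to a finite étale cover of $Y$ that kills the torsion of $\pi_1(Y)$. First I would record the algebraic structure of the fundamental group: since $Y$ is a smooth affine surface, $\pi_1(Y)$ is finitely generated, so the hypothesis that it is abelian of rank $1$ forces $\pi_1(Y)\cong\Z\oplus T$ for some finite abelian group $T$. If $T$ is trivial there is nothing to prove, so assume $m:=|T|\ge 2$. Under a fixed such isomorphism the subgroup corresponding to $\Z\oplus 0$ is isomorphic to $\Z$ and has index $m$; let $q\colon Y'\to Y$ be the associated connected finite étale covering of degree $m$ (this is algebraic by the standard algebraizability of finite topological covers of algebraic varieties, used repeatedly in the preceding arguments). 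Then $Y'$ is again a smooth affine surface, $\pi_1(Y')\cong\Z$, and by multiplicativity of the Euler characteristic in finite covers $e(Y')=m\cdot e(Y)>0$.

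Next, suppose for contradiction that there is a finite surjective morphism $f\colon\C^*\times\C^*\to Y$. Form the fiber product $\widetilde{X}:=(\C^*\times\C^*)\times_Y Y'$. Its projection to $\C^*\times\C^*$ is a finite étale covering (base change of the étale map $q$), and its projection to $Y'$ is finite surjective (base change of the finite surjective map $f$); in particular $\widetilde{X}$ is non-empty and smooth, so I may pick a connected component $X_0$, which is then irreducible. The restriction $X_0\to\C^*\times\C^*$ is a connected finite étale covering, and the restriction $X_0\to Y'$ is finite; the latter is moreover surjective, since a finite, hence quasi-finite, morphism from an irreducible surface has $2$-dimensional image, which must be all of the irreducible surface $Y'$. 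Now every connected finite étale covering of the algebraic $2$-torus is isomorphic to the $2$-torus itself — such covers correspond to finite-index sublattices of $\pi_1(\C^*\times\C^*)\cong\Z^2$ and are realized by monomial isogenies $\G_m^2\to\G_m^2$ — so $X_0\cong\C^*\times\C^*$.

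This produces a finite surjective morphism $X_0\cong\C^*\times\C^*\to Y'$ onto a smooth affine surface with $\pi_1(Y')\cong\Z$ and $e(Y')>0$, contradicting Proposition~\ref{Prop: No finite map from 2-torus to surface having infinite cyclic pi_1 and positive Euler characteristic}; hence no such $f$ exists. The only genuinely non-formal ingredient is the identification of connected finite étale covers of $\C^*\times\C^*$ with $\C^*\times\C^*$ (equivalently, that finite étale covers of a torus are tori); the surjectivity of $X_0\to Y'$, the algebraizability of $Y'$, and the Euler-characteristic bookkeeping are routine, so I expect no serious obstacle beyond citing that fact cleanly.
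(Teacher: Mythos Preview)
The paper gives no explicit proof, calling the corollary an immediate consequence of Proposition~\ref{Prop: No finite map from 2-torus to surface having infinite cyclic pi_1 and positive Euler characteristic}; your reduction---pass to the finite \'etale cover of $Y$ corresponding to the torsion-free part $\Z\subset\pi_1(Y)\cong\Z\oplus T$, lift $f$ through a connected component of the fiber product (which is again $\C^*\times\C^*$), and invoke the proposition---is precisely the intended immediate reduction, and is correct. Your choice to pick a connected component rather than first arranging $f_\ast$ to be surjective (as in Remark~\ref{Rem: pi_1 level map is surjective} and Lemma~\ref{Lem: Pull-back of connected covering}) is a harmless variant that works equally well here.
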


Now we are ready to prove one of our main results of this subsection.
\begin{theorem}\label{Thm: Affine surfaces with kappa=0 properly dominated by 2-torus}
	Let $Y$ be a smooth affine surface properly dominated by $\C^*\times \C^*$ with $\Kbar(Y)=0$. Then $Y$ is one of the following smooth affine surfaces:
    \begin{enumerate}[\indent\rm(1)]
        \item $\C^*\times \C^*$;
        \item Fujita's $H[-1,0,-1]$;
        \item homotopic to $S^2$, and moreover $Y\in \mathcal{S}_0$;
        \item a $\Q$-homology plane with $\pi_1(Y)=\Z/2\Z$.
    \end{enumerate}
\end{theorem}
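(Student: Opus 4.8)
The plan is to split according to whether $\pi_1(Y)$ is finite. Write $f\colon X=\C^*\times\C^*\to Y$ for the given finite surjective morphism. If $\deg f=1$, then $f$ is a finite birational morphism onto the normal surface $Y$, hence an isomorphism by Zariski's Main Theorem, and $Y\cong\C^*\times\C^*$ is case (1); so assume $\deg f>1$. By Serre's Lemma~\ref{Lem: surjective induced map at the level of pi_1}, $f_*\bigl(\pi_1(X)\bigr)$ is a quotient of $\pi_1(\C^*\times\C^*)\cong\Z^2$ of finite index in $\pi_1(Y)$. If $\pi_1(Y)$ is finite, then Proposition~\ref{Prop: Affine surface properly dominated by 2-torus and with finite pi_1} (using the hypothesis $\Kbar(Y)=0$) gives that $Y$ is homotopic to $S^2$ --- and then $Y\in\mathcal{S}_0$, since $\Kbar(Y)=0$ --- or a $\Q$-homology plane with $\pi_1(Y)\cong\Z/2\Z$, i.e. case (3) or (4).

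So assume $\pi_1(Y)$ is infinite. First I would reduce, via Remark~\ref{Rem: pi_1 level map is surjective}, to the case where $f_*\colon\pi_1(X)\to\pi_1(Y)$ is surjective: if it is not, factor $f=g\circ h$ with $h\colon X\to Z$ finite and $h_*$ surjective, and $g\colon Z\to Y$ a finite \'etale covering; then $Z$ is again a smooth affine surface properly dominated by $\C^*\times\C^*$ with $\Kbar(Z)=\Kbar(Y)=0$ (Lemma~\ref{Lem: Finite morphism basic properties}(1)). If $\pi_1(Z)$ is finite we land in the finite case applied to $Y$ via Proposition~\ref{Prop: Affine surface properly dominated by 2-torus and with finite pi_1}; if $\pi_1(Z)$ is infinite, it suffices to establish the theorem for $Z$ (where $h_*$ is onto) and then to descend --- but any such $Z$ in the resulting list is either homotopic to $S^2$ or a $\Q$-homology plane (already handled through $Y$) or an Eilenberg--MacLane surface of cases (1),(2), whose universal cover is $\C^2$; in that last situation $\widetilde Y=\C^2$ as well, so $Y$ is a $K(\pi_1(Y),1)$-surface with $\Kbar(Y)=0$ and $\pi_1(Y)$ infinite, and Theorem~\ref{Thm: Classification of EM surfaces}(2) puts it in case (1) or (2). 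Thus I may assume $f_*$ is onto; then $\pi_1(Y)\cong\Z^2/\ker f_*$ is infinite abelian, so it has free rank $r\in\{1,2\}$ and $\ker f_*\le\Z^2$ is free of rank $2-r$.

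The crux is to prove that $Y$ is an Eilenberg--MacLane space. Let $p\colon\widetilde Y\to Y$ be the analytic universal covering and set $\widehat X:=X\times_Y\widetilde Y$. Since $f_*$ is surjective, $\widehat X$ is connected by Lemma~\ref{Lem: Pull-back of connected covering}; moreover $\widehat X\to X$ is an \'etale covering with $\pi_1(\widehat X)\cong\ker f_*$, so $\widehat X$ is, as a complex manifold, a connected covering of the $2$-torus with fundamental group of rank $2-r\le 1$, i.e.\ $\widehat X\cong\C^2$ if $r=2$ and $\widehat X\cong\C\times\C^*$ if $r=1$; in either case $\widehat X$ is homotopy equivalent to a point or to $S^1$, so $H_2(\widehat X;\Q)=0$. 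The base-changed map $\widehat X\to\widetilde Y$ is finite and surjective, so the trace (transfer) map for finite surjective morphisms yields a surjection $H_2(\widehat X;\Q)\twoheadrightarrow H_2(\widetilde Y;\Q)$; hence $H_2(\widetilde Y;\Q)=0$. As $\widetilde Y$ is a simply connected Stein surface, the Andreotti--Frankel theorem gives $H_i(\widetilde Y;\Z)=0$ for $i\ge 3$ and $H_2(\widetilde Y;\Z)$ free, hence $H_2(\widetilde Y;\Z)=0$ and $\widetilde Y$ is contractible. Therefore $Y$ is a $K(\pi_1(Y),1)$-surface, non-contractible (as $\pi_1(Y)\ne(1)$) and with $\Kbar(Y)=0$, so Theorem~\ref{Thm: Classification of EM surfaces}(2) forces $Y\cong\C^*\times\C^*$ or $Y\cong H[-1,0,-1]$ --- cases (1) and (2). (In particular the case $r=1$ does not occur, since neither surface has a fundamental group of rank $1$, consistent with Corollary~\ref{Cor: No finite map from 2-torus to surface having rank 1 abelian pi_1 and positive Euler characteristic}.)

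I expect the main obstacle to be the surjectivity $H_2(\widehat X;\Q)\twoheadrightarrow H_2(\widetilde Y;\Q)$ used above: the algebraic analogue is Lemma~\ref{Lem: Finite morphism basic properties}(2), but $\widehat X\to\widetilde Y$ is a finite surjective map of \emph{analytic}, non-algebraic Stein surfaces (the relevant covers are genuinely transcendental), so one must invoke the transfer for finite surjective holomorphic maps --- which does hold, since for such a map $\phi$ the sheaf $\Q_{\widetilde Y}$ is a direct summand of the exact pushforward $\phi_*\Q_{\widehat X}$, split by $\tfrac{1}{\deg\phi}$ times the trace, so that $H^*(\widetilde Y;\Q)$ sits inside $H^*(\widehat X;\Q)$. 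The remaining points are routine: identifying the connected covering spaces of $\C^*\times\C^*$ with prescribed fundamental group, and checking that the descent step of the second paragraph really lands back in the stated list --- which, as noted there, reduces to the finite-$\pi_1$ case or to the observation that a finite \'etale quotient of a surface with universal cover $\C^2$ again has universal cover $\C^2$.
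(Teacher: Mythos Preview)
Your proof is correct and takes a genuinely different route from the paper in the infinite-$\pi_1$ case. Both proofs dispatch the finite-$\pi_1$ case via Proposition~\ref{Prop: Affine surface properly dominated by 2-torus and with finite pi_1}. For $\pi_1(Y)$ infinite, the paper passes to the intermediate surface $Z$ (with $h_*$ onto), embeds $Z$ into a \emph{strongly minimal} open surface $Z'$ of $\Kbar=0$, and then runs through Kojima's explicit list \cite[Table~1]{Koj1999}: the surfaces $O(4,1)$, $O(2,2)$, $H[0,0]$ are eliminated via Proposition~\ref{Prop: No finite map from 2-torus to surface having infinite cyclic pi_1 and positive Euler characteristic}, leaving $O(1,1,1)\cong\C^*\times\C^*$ and $H[-1,0,-1]$, and a further $b_2$ count forces $Z=Z'=\C^*\times\C^*$. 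Your argument bypasses the classification entirely: you pull the \emph{analytic} universal cover $\widetilde Y$ back along $f$, identify the resulting cover of $\C^*\times\C^*$ as $\C^2$ or $\C\times\C^*$, and use the transfer for finite holomorphic maps together with Andreotti--Frankel to conclude $\widetilde Y$ is contractible, hence $Y$ is $K(\pi,1)$; then Theorem~\ref{Thm: Classification of EM surfaces}(2) finishes. This is shorter and more conceptual, at the price of invoking the sheaf-theoretic trace in the non-algebraic setting --- which you correctly flag and justify. The paper's approach, by contrast, stays within algebraic geometry and yields the extra structural information $Z\cong\C^*\times\C^*$ along the way.

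Two small remarks. First, your justification ``$Y\in\mathcal S_0$, since $\Kbar(Y)=0$'' is too brisk: membership in $\mathcal S_0$ also requires factoriality and $\Gamma(Y,\mathcal O_Y)^*=\C^*$, and this is extracted in the paper from the \emph{proof} of Proposition~\ref{Prop: Affine surface properly dominated by 2-torus and with finite pi_1} (see the remark following it), not from $\Kbar$ alone. Second, in your reduction paragraph the subcase ``$\pi_1(Z)$ finite'' is vacuous once you have assumed $\pi_1(Y)$ infinite (a finite-index subgroup of an infinite group is infinite); this is harmless but can be dropped.
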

\begin{proof}
    Let $X:=\C^* \times \C^*$ properly dominate $Y$ via a finite surjective morphism, say $f: X \to Y$. We start the same way as above. If $\deg(f)=1$, then $f$ is an isomorphism by Zariski's Main Theorem. So assume that $d:=\deg f >1$.

    With the notations in Remark~\ref{Rem: pi_1 level map is surjective}, we first factorize $f$ into $g$ and $h$. By construction, the induced homomorphism $h_\ast : \pi_1(X) \to \pi_1(Z)$ is surjective. 
    
    Let $Z'$ be a smooth strongly minimal surface such that $Z'$ is embedded in $Z$ as an affine open subspace. Then it is well-known that $\Kbar(Z)=\Kbar(Z')=0.$ Therefore $Z'$ is one of the surfaces listed in \cite[Table 1]{Koj1999}.\\
    
    {\bf Case 1.} \emph{Suppose that $\pi_1(Z)$ is finite.}
    
    \medskip
    Since $g: Z \to Y$ is a finite covering and hence $[\pi_1(Y):g_\ast(\pi_1(Z))]=\deg g$ is finite, therefore it follows that $\pi_1(Y)$ is finite too in this case. Thus, Proposition \ref{Prop: Affine surface properly dominated by 2-torus and with finite pi_1} implies that $Y$ is either homotopic to $S^2$ or a $\Q$-homology plane with $\pi_1(Y)=\Z/2\Z$.\\

    {\bf Case 2.} \emph{Suppose that $\pi_1(Z)$ is infinite.}

    \medskip
    Since $Z'\subset Z$ is a Zariski open subset, the homomorphism $\pi_1(Z') \to \pi_1(Z)$ induced by this open immersion must be surjective, and hence $\pi_1(Z')$ is infinite in this case. Thus, it follows from \cite[Table 1]{Koj1999} that $Z'$ is one of the five surfaces --- $O(4,1)$, $O(2,2)$, $O(1,1,1)$, $H[-1,0,-1]$ and $H[0,0]$. Now the rest of the proof of this case will be split into the following two claims.\\
    
    {\bf Claim 2a.} \emph{$Z'$ is isomorphic to either $O(1,1,1)$ or $H[-1,0,-1]$.}
    \medskip
    
    {\it Proof of Claim 2a.} --- Suppose $Z'$ is one of the three surfaces --- $O(4,1)$, $O(2,2)$ and $H[0,0]$, then $\pi_1(Z')\cong \Z$. Since $\pi_1(Z') \to \pi_1(Z)$ is surjective with $\pi_1(Z)$ an infinite group, hence $\pi_1(Z)\cong \Z$ whenever $Z'$ is one of $O(4,1)$, $O(2,2)$ and $H[0,0]$. Also, observe that $e(Z)\geq e(Z')$ as $$e(Z)= e(Z')+e(Z\setminus Z')$$ and $Z\setminus Z'$ is a disjoint union (possibly an empty union) of affine lines (cf. \cite[Lemma 1.4]{Koj1999}). Thus, $e(Z) > 0$ as $Z'$ being one of $O(4,1)$, $O(2,2)$ and $H[0,0]$ has positive Euler characteristic. However, this leads to a contradiction on the existence of such a finite morphism $h: X \to Z$; i.e, in other words, this contradicts the existence of such $Z$ being properly dominated by $\C^*\times \C^*$, following Proposition \ref{Prop: No finite map from 2-torus to surface having infinite cyclic pi_1 and positive Euler characteristic}. Therefore, it turns out that $Z'$ is either $O(1,1,1)$ (which is further isomorphic to $\C^*\times \C^*$) or $H[-1,0,-1]$, hence the claim.\qed\\
    
    {\bf Claim 2b.} \emph{$Z=Z'=O(1,1,1)$.}
    \medskip
    
    {\it Proof of Claim 2b.} --- Clearly, as we noticed earlier, $e(Z) \geq e(Z')=0$ and using \cite[Lemma 1.4]{Koj1999} it follows that $e(Z) = e(Z')$ if and only if $Z=Z'$. Suppose $e(Z)>0$. Since $h_\ast: \pi_1(X) \to \pi_1(Z)$ is surjective and $\pi_1(X)=\pi_1(\C^*\times \C^*)=\Z^2$, so $\pi_1(Z)$ is an abelian group and is isomorphic to one of the following:
    \begin{enumerate}
    	\item $\Z^2$;
    	\item $\Z \oplus (\Z/m\Z)$ for some $m > 1$;
    	\item $\Z$;
        \item $(\Z/m\Z)$ for some $m>1$;
    	\item $(\Z/m\Z) \oplus (\Z/n\Z)$ for some $m, n >1$;
        \item $(1)$ --- the trivial group.
    \end{enumerate}  
    In this case, since $\pi_1(Z)$ is infinite, the last three possibilities are ruled out and thus $\pi_1(Z)$ is isomorphic to either $\Z^2$, or $\Z \oplus (\Z/m\Z)$ for some $m > 1$, or $\Z$. Since we assumed that $e(Z)>0$, Proposition \ref{Prop: No finite map from 2-torus to surface having infinite cyclic pi_1 and positive Euler characteristic} and Corollary \ref{Cor: No finite map from 2-torus to surface having rank 1 abelian pi_1 and positive Euler characteristic} together imply that $\pi_1(Z)\cong \Z^2$. Now $\pi_1(Z)\cong \Z^2$ with $e(Z)>0$ yield that $b_2(Z) \geq 2$, which again contradicts the existence of such a finite morphism $h: X \to Z$ since $$1=b_2(\C^*\times \C^*)=b_2(X) < b_2(Z).$$ This implies that $e(Z)=0=e(Z')$ and hence $Z=Z'$. Already we have observed that $\pi_1(Z)$ must be abelian and since $\pi_1(H[-1,0,-1])$ is non-abelian (cf. \cite[Table 1]{Koj1999}), $Z'$ and hence $Z$ must be isomorphic $O(1,1,1)$, which is further isomorphic to $\C^*\times \C^*$. This completes the proof of the above claim.\qed\\
    		
    Clearly $Z$ being isomorphic to $\C^* \times \C^*$ is Eilenberg-MacLane. Hence, $Y$ is also Eilenberg-MacLane as $Z$ is a topological covering of $Y$. Now it follows from Theorem \ref{Thm: Classification of EM surfaces} that up to isomorphism of varieties, $\C^* \times \C^*$ and $H[-1,0,-1]$ are the only two affine smooth Eilenberg-MacLane surfaces with $\Kbar = 0$. Therefore, $Y$ is isomorphic to one of these two smooth affine surfaces. This completes the proof of this case and the theorem as well.
\end{proof}

\begin{remark}
In \cite{Fur1989}, M.~Furushima remarked (without any proof) at the end of his paper that $\C^* \times \C^*$ is the only smooth affine surface with $\Kbar = 0$ that can be properly dominated by $\C^* \times \C^*$. To the best of our knowledge, no detailed proof of this assertion has appeared in the literature since then. Our result in Theorem~\ref{Thm: Affine surfaces with kappa=0 properly dominated by 2-torus} provides a more comprehensive picture, showing that Furushima’s claim does not hold in full generality. Indeed, $\C^* \times \C^*$ occurs as a $2$-fold {\'e}tale finite cover of Fujita’s surface $H[-1,0,-1]$ (see \cite[Proposition~5.8]{GGH2023} and the proof therein). Moreover, our theorem opens the possibility that a smooth affine surface homotopic to $S^2$ could be properly dominated by $\C^* \times \C^*$. However, we do not presently know of any construction of a finite surjective morphism from $\C^* \times \C^*$ onto such a surface.
\end{remark}

\medskip

Another main result of this subsection is the following.

\begin{theorem}\label{Thm: Affine surfaces with kappa negative properly dominated by 2-torus}
   Let $Y$ be a smooth affine surface properly dominated by $\C^*\times \C^*$ with $\Kbar(Y)=-\infty$. Then $Y$ is isomorphic to either $\A^2$ or $\A^1\times \C^*$.
\end{theorem}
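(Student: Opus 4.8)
The plan is to run the same machinery used for the $\Kbar = 0$ case, but now exploiting that $\Kbar(Y) = -\infty$ forces an $\A^1$-fibration. First I would invoke Remark~\ref{Rem: pi_1 level map is surjective} to factor the finite surjective morphism $f \colon \C^* \times \C^* \to Y$ as $g \circ h$ with $h \colon \C^* \times \C^* \to Z$ finite and $h_\ast$ surjective on $\pi_1$, and $g \colon Z \to Y$ a finite covering; since a finite covering preserves higher homotopy and all the target surfaces in the conclusion are $K(\pi,1)$'s, it suffices to identify $Z$. Because $h_\ast$ is surjective and $\pi_1(\C^* \times \C^*) = \Z^2$, the group $\pi_1(Z)$ is abelian of rank $\le 2$; and by Lemma~\ref{Lem: Finite morphism basic properties}(2) we get $b_2(Z) \le b_2(\C^* \times \C^*) = 1$ and $b_1(Z) \le 2$. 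Also $\Kbar(Z) = -\infty$ by Lemma~\ref{Lem: Finite morphism basic properties}(3) applied along $h$ (or rather, $\Kbar(\C^*\times\C^*)=0 \ge \Kbar(Z)$, so combined with $\Kbar(Y)=-\infty$ and étaleness of $g$ we can just say $\Kbar(Z)=-\infty$).

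Next I would use that $\Kbar(Z) = -\infty$ implies $Z$ carries an $\A^1$-fibration $\psi \colon Z \to D$ over a smooth curve $D$. If $D$ were projective of positive genus or $\PP^1$, one checks via Suzuki's formula (Theorem~\ref{Suzuki's formula}) and the bounds $b_1(Z) \le 2$, $b_2(Z) \le 1$ that this is incompatible: a projective base of positive genus forces $b_2(Z) = 1$ and $e(Z) \le 0$, but then the Euler-characteristic balance with $e(\C^*\times\C^*)=0$ and the multiplicity of $h$ gives a contradiction much as in the proof of the Proposition following Theorem~\ref{Main Theorem - kappa bar negative case}; the $\PP^1$ case is excluded because it would force $e(Z) \ge 2$ while $e(Z) = \deg h \cdot e(\C^*\times\C^*) = 0$. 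Hence $D$ is affine. Applying Suzuki's formula to $\psi$ together with $b_1(Z) \le 2$ and $b_2(Z) \le 1$ pins down the possibilities for $(e(D), b_1(D))$ and the singular fibers $s = \sum (e(F_i) - e(\A^1))$: since $e(Z) = 0$, one gets $e(D) + s = 0$ with $s \ge 0$ and $e(D) = b_0(D) - b_1(D) \le 1$, so either $D \cong \A^1$ with $s = 0$, or $D \cong \C^*$ (or a higher punctured line/positive-genus affine curve) with $e(D) = 0$ and $s = 0$. In every case $\psi$ has no reducible fibers (by \cite[Chapter I, Lemma 4.4]{Miy1981}), so any singular fiber is scheme-theoretically $m\A^1$ with $m > 1$.

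Then I would eliminate multiple fibers by the ramified covering trick (Theorem~\ref{Ramified covering trick}), pulling the resulting finite étale cover $Z' \to Z$ back along $h$ (using surjectivity of $h_\ast$, which keeps the pullback irreducible) to get a finite surjective morphism $\C^* \times \C^* = X' \to Z'$ — here $X' \cong \C^* \times \C^*$ since a finite étale cover of the 2-torus is again the 2-torus. A nontrivial such cover would multiply $e(Z)$ by $\deg > 1$, but $e(Z) = 0$ makes this vacuous for the Euler characteristic; the real leverage is that $Z'$ then has an $\A^1$-fibration with no multiple fibers over an affine base, so by \cite{Miy1981} it is Zariski-locally trivial, and by Lemma~\ref{Lem: A^1-bundle over affine rational curve is trivial} (when the base is rational) or directly it is $\A^1 \times (\text{base})$. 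Combined with the constraint $b_1(Z) \le 2$ from Lemma~\ref{Lem: Finite morphism basic properties}(2) and $\pi_1(Z)$ abelian, the only surviving cases are $Z \cong \A^2$ (base $\A^1$, $\psi$ Zariski-locally trivial, using \cite[Remark 3.13, 3.14]{GGH2023} via Nori's lemma to get simple connectivity) and $Z \cong \A^1 \times \C^*$ (base $\C^*$). Note $D$ cannot be a positive-genus affine curve or a multiply-punctured line, since then $b_1(Z) \ge b_1(D) \ge 2$ would force, after combining with $b_2(Z) \le 1$, an Euler characteristic inconsistent with $e(Z)=0$ unless $D\cong\C^*$. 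Finally, since $Y$ is the quotient of $Z \in \{\A^2, \A^1 \times \C^*\}$ by the finite covering $g$ and $\Kbar(Y) = -\infty$, I would invoke Theorem~\ref{Thm: Classification of EM surfaces}(1) (as $Y$ is $K(\pi,1)$, being covered by one) together with Corollary~\ref{Cor: Affine plane is the only smooth surface properly dominated by an affine plane} and Corollary~\ref{Cor: Generalization of Furushima's result} to conclude $Y \cong \A^2$ or $Y \cong \A^1 \times \C^*$. The main obstacle I anticipate is the bookkeeping in the case $D \cong \C^*$: ruling out a nontrivial quotient $Y = Z/G$ with $Z \cong \A^1 \times \C^*$ that is not itself of the form $\A^1 \times \C^{s*}$ requires carefully tracking the $\A^1$-fibration on $Y$ and applying Corollary~\ref{Cor: Generalization of Furushima's result} (equivalently, re-running the base-curve analysis directly for $Y$), and verifying that no exotic multiple-fiber configuration survives the étale-descent step.
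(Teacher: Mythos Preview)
Your argument has a genuine gap at its very foundation: the assertion $e(Z)=0$. You write ``$e(Z)=\deg h\cdot e(\C^*\times\C^*)=0$'', but Euler characteristic is multiplicative only for \emph{\'etale} covers, and $h\colon \C^*\times\C^*\to Z$ is merely finite surjective. A concrete witness: the map $(x,y)\mapsto (x+x^{-1},\,y+y^{-1})$ is a degree-$4$ finite surjection $\C^*\times\C^*\to\A^2$ with $h_\ast$ surjective on $\pi_1$, yet $e(\A^2)=1\neq 0$. So $Z\cong\A^2$ is a live possibility that your framework would wrongly exclude (indeed, under $e(Z)=0$ your own Suzuki computation with $D\cong\A^1$ gives $1+s=0$, hence $s=-1$, a contradiction). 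Everything downstream---ruling out $D\cong\PP^1$, forcing $s=0$, pinning $D$ down to $\A^1$ or $\C^*$---rests on this false premise.

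There is a second gap in the multiple-fiber step. After the ramified covering trick, you assert that $Z'$ has an $\A^1$-fibration with no multiple fibers and is therefore Zariski-locally trivial. But the trick converts an irreducible multiple fiber $m\A^1$ into a \emph{reducible} fiber (a disjoint union of reduced affine lines); it does not produce a bundle. The paper exploits precisely this: the appearance of reducible fibers in $\varphi'\colon Z'\to C$ contradicts the $\Q$-factoriality of $Z'$ (which $Z'$ inherits by being properly dominated by $X'\cong\C^*\times\C^*$), and that contradiction is what shows $\varphi$ had no multiple fibers to begin with.

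The paper's route replaces your Betti/Euler bookkeeping with two structural inputs you did not use: (i) $Z$ is $\Q$-factorial because it is properly dominated by the factorial surface $\C^*\times\C^*$ (via \cite{GMM2021} or Lemma~\ref{Lem: Finite morphism basic properties}(3)), so Lemma~\ref{Lem: Q-factorial fibration} immediately gives that the base $B$ is affine and every fiber of $\varphi$ is irreducible; and (ii) $Z$, hence $B$, is rational (Castelnuovo), and since $\pi_1(B)$ is a quotient of the abelian group $\pi_1(Z)$, one gets $B\cong\A^1$ or $\C^*$ directly. These two facts do all the work that your Suzuki/Euler argument was meant to do, without ever needing $e(Z)$.
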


\begin{proof}
    Let $X:=\C^* \times \C^*$ properly dominate $Y$ via a finite surjective morphism, say $f: X \to Y$. We start the same way as above. If $\deg(f)=1$, then $f$ is an isomorphism by Zariski's Main Theorem, but this can not happen as $\Kbar(Y)=-\infty$ whereas $\Kbar(X)=0$. So, $\deg f>1$.

    With the notations in Remark~\ref{Rem: pi_1 level map is surjective}, we first factorize $f$ into $g$ and $h$. By construction, the induced homomorphism $h_\ast : \pi_1(X) \to \pi_1(Z)$ is surjective. Since $\pi_1(X)=\pi_1(\C^*\times \C^*)\cong \Z^2$, therefore $\pi_1(Z)$ must be abelian.
    
    Firstly, note that $\Kbar(Z)=\Kbar(Y)=-\infty$ as $g: Z \to Y$ is a finite {\'e}tale morphism. By a well-known result due to Fujita--Miyanishi--Sugie (cf. \cite{Fuj1979}, cf. \cite{MS1980}, cf. \cite{Sug1980}), $Z$ admits an $\A^1$-fibration, say $\varphi:Z \to B$ onto a smooth algebraic curve $B$. Since $Z$ is a smooth affine surface, every fiber of $\varphi$ is a disjoint union of affine lines, if taken with reduced structures (cf. \cite{Miy1981}). Since $X$ being $\C^*\times \C^*$ is a rational variety, $Y$ and $Z$ being dominated by $X$ become unirational and hence rational, as these are algebraic surfaces defined over $\C$ due to Castelnuovo's criterion for rationality. For the same reason, it turns out that the curve $B$ is also rational. Since any smooth affine surface properly dominated by $\C^*\times \C^*$ is $\Q$-factorial as observed before in the proof of Proposition \ref{Prop: Affine surface properly dominated by 2-torus and with finite pi_1}, $Z$ is $\Q$-factorial and hence Lemma \ref{Lem: Q-factorial fibration} implies that $B$ is a smooth affine curve and $\varphi$ has no reducible fibers. Since $\varphi$ is an $\A^1$-fibration, the induced homomorphism $\varphi_\ast: \pi_1(Z) \to \pi_1(B)$ is surjective. Since $\pi_1(Z)$ is abelian, so is $\pi_1(B)$. Therefore $B$ is a smooth affine rational curve with $\pi_1(B)$ abelian, whence $B$ is isomorphic to either $\A^1$, or $\C^*$.

    Let $\varphi$ have multiple fibers. Then these multiple fibers are scheme theoretically thick affine lines, i.e., isomorphic to $m\A^1$ for some integer $m>1$. Then, applying the ramified covering trick, we get a finite {\'e}tale covering $p:Z'\to Z$ such that $Z'$ is a smooth affine surface admitting an $\A^1$-fibration, say $\varphi': Z' \to C$ onto a smooth affine curve $C$. Also note that $\varphi'$ has no multiple fibers, but it has reducible fibers which are disjoint unions of affine lines taken with reduced structures. 
    Since $h_\ast: \pi_1(X) \to \pi_1(X)$ is surjective, we have the following commutative diagram:
    $$
        \begin{tikzcd}[row sep=large, column sep=large]
    	X':=X \times_{Z} Z' \arrow[r, "h'"] \arrow[d, "q"'] &Y' \arrow[d, "p"]\\
    	X \arrow[r, "h"'] & Y
    	\end{tikzcd}
    $$
    such that $X'$ is an irreducible, smooth affine surface and $q: X' \to X$ is a connected covering which is the pull back of the covering $p: Z' \to Z$ via $h: X \to Z$ and the induced map, say $h': X' \to Y'$ is a finite surjective morphism. Since $X=\C^* \times \C^*$ and $q$ is a finite covering, $X' \cong \C^* \times \C^*$. So $h'$ is again a finite surjective morphism similar to $h$. But since $Z'$ admits an $\A^1$-fibration $\varphi':Z' \to C$ which has reducible fibers, by Lemma \ref{Lem: Q-factorial fibration} we conclude that $Z'$ must not be $\Q$-factorial. But this contradicts the fact that $Z'$ is properly dominated by a factorial surface such as $\C^*\times \C^*$. This implies that $\varphi : Z \to B$ has no multiple fiber and all fibers are irreducible. So it turns out that $\varphi : Z \to B$ is an $\A^1$-bundle over $B$.\\
    
    {\bf Case 1.} \emph{Suppose $B \cong \A^1$.}

    \medskip
    It follows that $\varphi : Z \to B$ is an $\A^1$-bundle over $\A^1$ in this case and hence $Z\cong \A^2$. Since $g: Z \to Y$ is a finite {\'e}tale morphism, it follows that $Y\cong \A^2$.\\
    	
    {\bf Case 2.} \emph{Suppose $B \cong \C^*$.}

    \medskip	
    
    In this case, $\varphi : Z \to B$ is an $\A^1$-bundle over $\C^*$. Hence $Z \cong \A^1 \times \C^*$ following Lemma \ref{Lem: A^1-bundle over affine rational curve is trivial}. Since $g: Z \to Y$ is a finite {\'e}tale covering, it follows from Corollary \ref{Cor: Generalization of Furushima's result} that $Y\cong \A^1\times \C^*$.\\
    	
    This completes the proof.
\end{proof}

Now we are all set to prove our main result using all that we proved earlier.

\begin{theorem}\label{Main Theorem - kappa bar = 0 case}
	Let $f \colon X \to Y$ be a finite surjective morphism of smooth affine surfaces. If $\Kbar(X)=0$ and $X$ satisfies the finite homotopy rank-sum property, then so does $Y$.
\end{theorem}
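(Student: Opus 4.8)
The plan is to reduce the assertion to classification results already in hand. Since $\Kbar(X)=0$, the surface $X$ is of log non-general type, so by Theorem~\ref{Thm: Classification of affine surfaces with finite homotopy rank-sum} it is one of: (i)~an Eilenberg--MacLane $K(\pi,1)$-space, (ii)~a smooth $\Q$-homology plane with $\pi_1(X)\cong\Z/2\Z$, or (iii)~a surface homotopic to $S^2$. In case~(i) the group $\pi$ is nontrivial, because no topologically contractible smooth affine surface has $\Kbar=0$ (cf.~\cite{Miy2001}). I would treat the three cases separately, in each of them producing a finite surjective morphism onto $Y$ whose source has understood proper-descent behaviour.

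First, in case~(i), Theorem~\ref{Thm: Classification of EM surfaces}(2) forces $X\cong\C^*\times\C^*$ or $X\cong H[-1,0,-1]$. If $X\cong\C^*\times\C^*$ then $f$ itself exhibits $Y$ as properly dominated by $\C^*\times\C^*$; if $X\cong H[-1,0,-1]$ I would precompose $f$ with the $2$-fold unramified Galois cover $\C^*\times\C^*\to H[-1,0,-1]$ supplied by that theorem, obtaining a finite surjective morphism $\C^*\times\C^*\to Y$. By Lemma~\ref{Lem: Finite morphism basic properties}(1) we get $\Kbar(Y)\le\Kbar(\C^*\times\C^*)=0$, so $\Kbar(Y)\in\{-\infty,0\}$, and Theorems~\ref{Thm: Affine surfaces with kappa=0 properly dominated by 2-torus} and~\ref{Thm: Affine surfaces with kappa negative properly dominated by 2-torus} then force $Y$ to be one of: $\A^2$, $\A^1\times\C^*$, $\C^*\times\C^*$, $H[-1,0,-1]$, a surface homotopic to $S^2$, or a smooth $\Q$-homology plane with $\pi_1\cong\Z/2\Z$.

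Next, in cases~(ii) and~(iii), I would invoke Theorem~\ref{Thm: proper descent of homotopy sphere}. In case~(iii) it applies to $f$ directly. In case~(ii), the universal covering $p\colon\wt X\to X$ is a degree-$2$ finite \'etale cover with $\wt X$ a smooth affine surface homotopic to $S^2$ (it is a Moore space $M(\Z,2)$, as already recorded in the proof of Theorem~\ref{Main Theorem - kappa bar negative case}), and $f\circ p\colon\wt X\to Y$ is finite surjective, so the theorem applies to $f\circ p$. In either case $Y$ is topologically contractible, homotopic to $S^2$, or a smooth $\Q$-homology plane with $\pi_1\cong\Z/2\Z$.

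Finally I would verify, class by class, that every surface on the resulting finite list of candidates for $Y$ satisfies the finite homotopy rank-sum property. The surfaces $\A^2$, $\A^1\times\C^*$, $\C^*\times\C^*$, $H[-1,0,-1]$ and the topologically contractible ones are $K(\pi,1)$-spaces, so their higher homotopy groups all vanish and the rank-sum is $0$. If $Y$ is homotopic to $S^2$, then $\pi_i(Y)\otimes_\Z\Q\cong\Q$ for $i=2,3$ and vanishes otherwise (the minimal Sullivan model of $S^2$ has one generator in degree $2$ and one in degree $3$), so the rank-sum is $2$. If $Y$ is a smooth $\Q$-homology plane with $\pi_1\cong\Z/2\Z$, then $\pi_i(Y)\cong\pi_i(\wt Y)\cong\pi_i(S^2)$ for all $i\ge2$, so again the rank-sum is $2$. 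In every case the rank-sum is finite, which is what we want. I expect no genuine obstacle: the hard work is carried out in Theorems~\ref{Thm: Affine surfaces with kappa=0 properly dominated by 2-torus}, \ref{Thm: Affine surfaces with kappa negative properly dominated by 2-torus}, and~\ref{Thm: proper descent of homotopy sphere}; the only points requiring a little care are the passages to finite \'etale covers in cases~(i) and~(ii) and the rational-homotopy computation $\pi_\ast(S^2)\otimes_\Z\Q$.
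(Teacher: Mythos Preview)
Your proposal is correct and follows essentially the same approach as the paper: both reduce via Theorems~\ref{Thm: Classification of affine surfaces with finite homotopy rank-sum} and~\ref{Thm: Classification of EM surfaces} to the cases $X\cong\C^*\times\C^*$ or $X$ homotopic to $S^2$ (using the same \'etale covers $\C^*\times\C^*\to H[-1,0,-1]$ and $\wt X\to X$ to handle the remaining cases), and then invoke Theorems~\ref{Thm: Affine surfaces with kappa=0 properly dominated by 2-torus}, \ref{Thm: Affine surfaces with kappa negative properly dominated by 2-torus}, and~\ref{Thm: proper descent of homotopy sphere}. Your final explicit verification of the rank-sum for each candidate $Y$ is slightly more detailed than the paper's, but the argument is the same.
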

\begin{proof}
    First note that, since $Y$ is properly dominated by $X$ and $\Kbar(X)=0$, thus $\Kbar(Y)$ is either $-\infty$ or $0$. Since $X$ satisfies the finite homotopy rank-sum property and $\Kbar(X)=0$, it follows from Theorem \ref{Thm: Classification of affine surfaces with finite homotopy rank-sum} and Theorem \ref{Thm: Classification of EM surfaces} that $X$ is isomorphic to one of the following surfaces:
    \begin{enumerate}[\indent\rm(1)]
        \item $\C^*\times \C^*$;
        \item Fujita's $H[-1,0,-1]$;
        \item homotopic to $S^2$;
        \item a $\Q$-homology plane with fundamental group $\Z/2\Z$.
    \end{enumerate}
Note that $\C^*\times \C^*$ is a $2$-fold {\'e}tale cover of $H[-1,0,-1]$. Also note that the universal cover of a $\Q$-homology plane with fundamental group $\Z/2\Z$ is homotopic to $S^2$. Therefore, without loss of generality, we can assume that $X$ is either $\C^*\times \C^*$ or is homotopic to $S^2$. 
 
\begin{enumerate}[\indent---]
    \item If $X\cong \C^*\times \C^*$, then Theorem \ref{Thm: Affine surfaces with kappa=0 properly dominated by 2-torus} and Theorem \ref{Thm: Affine surfaces with kappa negative properly dominated by 2-torus} together imply that $Y$ enjoys the finite homotopy rank-sum property.
    
    \item If $X$ is homotopic to $S^2$, then using Theorem \ref{Thm: proper descent of homotopy sphere}, it turns out that $Y$ enjoys the finite homotopy rank-sum property.
\end{enumerate}
This completes the proof.
\end{proof}

\section{\bf Examples}
We conclude this paper with an example demonstrating that, in general, the Eilenberg–MacLane descent under a finite surjective morphism of smooth affine surfaces does not hold.

\begin{example}\label{Ex: Non-example to the main question}
	Let $Y := (\PP^1 \times \PP^1) \setminus \Delta_{\PP^1 \times \PP^1}$, where $\Delta_{\PP^1 \times \PP^1}$ denotes the diagonal subvariety. Since $\Delta_{\PP^1 \times \PP^1}$ is an ample divisor in $\PP^1 \times \PP^1$, it turns out that $Y$ is a smooth affine surface. Also notice that $Y$ admits the structure of an $\A^1$-bundle over $B = \PP^1$. 
	
	Now consider a finite ramified morphism $C \to B$, where $C$ is a smooth non-rational curve, and let $X$ be the pullback $\A^1$-bundle over $C$. Then the induced morphism 
	\[
	f \colon X \longrightarrow Y
	\]
	is finite. Since $Y$ is affine, $X$ is also affine. The surface $X$ is Eilenberg–MacLane using Lemma \ref{Lem: Well-known about EM}(3), whereas $Y$ is not, by Lemma~\ref{Lem: Well-known about EM}(4).
	
	It is worth noting, however, that $Y$ is homotopy equivalent to $\PP^1$ by Lemma~\ref{Lem: Homotopy equivalence if the fiber of a bundle is contractible}, and hence homotopy equivalent to $S^2$.
\end{example}

\section*{\bf Acknowledgements} 

The author expresses his sincere gratitude to Prof.~R.~V.~Gurjar for suggesting one of the main problems that inspired this work. He is also deeply thankful to Prof.~Sudarshan~R.~Gurjar for several helpful discussions, for carefully reading the initial draft of the manuscript, and for many valuable suggestions that significantly improved the exposition. The author is grateful to both of them for generously allowing him to publish this manuscript as a single-authored work. He acknowledges financial support from the Department of Science and Technology, Government of India, through the INSPIRE Faculty Fellowship (Reference No.: DST/INSPIRE/04/2024/003379).

\section*{\bf Statements and Declarations}

There is no conflict of interest regarding this manuscript. No external funding was received for it. No data were generated or used.

\bibliographystyle{alpha}
\bibliography{ref}
\end{document}